\theoremstyle{plain}
\newtheorem{theorem}{Theorem}[section]
\newtheorem{propo}[theorem]{Proposition}
\newtheorem{lemma}[theorem]{Lemma}
\newtheorem{corollary}[theorem]{Corollary}
\theoremstyle{definition}
\newtheorem{assumption}{Assumption}
\crefname{assumption}{Assumption}{assumptions}
\newenvironment{assumptionp}[1]{
  
  \assumptionalt
}{\endassumptionalt}
\theoremstyle{remark}
\newtheorem{remark}[theorem]{Remark}
\newtheorem{example}[theorem]{Example}
\global\long\def\esp{\mathbb{E}}%
\global\long\def\F{\mathcal{F}}%
\global\long\def\R{\mathbb{R}}%
\global\long\def\P{\mathbb{P}}%
\newcommand{\imp}{\mathrm{imp}}
\newcommand{\mis}{\mathrm{mis}}
\newcommand{\argmin}{\mathop{\mathrm{arg \, min}}}
\newcommand{\cb}[1]{}
\newcommand{\al}[1]{}
\newcommand{\ad}[1]{}
\newcommand{\es}[1]{}
\definecolor{OliveGreen}{rgb}{0.24, 0.71, 0.54}
\definecolor{RoyalBlue}{rgb}{0.0, 0.47, 0.75}
\definecolor{BrickRed}{rgb}{0.77, 0.12, 0.23}
\definecolor{Vert}{RGB}{0,128,0}
\begin{document}

\title{
Naive imputation implicitly regularizes high-dimensional linear models.
% Why naive imputation may work for prediction in high-dimensional linear models
}

\author{Alexis Ayme, Claire Boyer, Aymeric Dieuleveut \\ \& Erwan Scornet}
% It is OKAY to include author information, even for blind
% submissions: the style file will automatically remove it for you
% unless you've provided the [accepted] option to the icml2023
% package.

% List of affiliations: The first argument should be a (short)
% identifier you will use later to specify author affiliations
% Academic affiliations should list Department, University, City, Region, Country
% Industry affiliations should list Company, City, Region, Country

% You can specify symbols, otherwise they are numbered in order.
% Ideally, you should not use this facility. Affiliations will be numbered
% in order of appearance and this is the preferred way.

\date{}
% \icmlaffiliation{yyy}{Sorbonne Université, CNRS, Laboratoire de Probabilités, Statistique et Modélisation (LPSM), F-75005 Paris, France}
% \icmlaffiliation{comp}{MOKAPLAN, INRIA Paris}
% \icmlaffiliation{sch}{CMAP, UMR7641, Ecole Polytechnique, IP Paris, 91128 Palaiseau, France}

% You may provide any keywords that you
% find helpful for describing your paper; these are used to populate
% the "keywords" metadata in the PDF but will not be shown in the document

% this must go after the closing bracket ] following \twocolumn[ ...

% This command actually creates the footnote in the first column
% listing the affiliations and the copyright notice.
% The command takes one argument, which is text to display at the start of the footnote.
% The \icmlEqualContribution command is standard text for equal contribution.
% Remove it (just {}) if you do not need this facility.

%\printAffiliationsAndNotice{}  % leave blank if no need to mention equal contribution
%\printAffiliationsAndNotice{\icmlEqualContribution} % otherwise use the standard text.

\maketitle

\begin{abstract}
Two different approaches exist to handle missing values for prediction: either imputation, prior to fitting any predictive algorithms, or dedicated methods able to natively incorporate missing values.
While imputation is widely (and easily) use, it is unfortunately biased when low-capacity predictors (such as linear models) are applied afterward. However, in practice, naive imputation exhibits good predictive performance. In this paper, we study the impact of imputation in a high-dimensional linear model with MCAR missing data.  We prove that zero imputation performs an implicit regularization closely related to the ridge method, often used in high-dimensional problems. Leveraging on this connection, we establish that the imputation bias is controlled by a ridge bias, which vanishes in high dimension. As a predictor, we argue in favor of the averaged SGD strategy, applied to zero-imputed data. We establish an upper bound on its generalization error, highlighting that imputation is benign in the $d \gg \sqrt{n}$ regime. Experiments illustrate our findings. 
\end{abstract}

%\es{Checker que toutes les matrices n'ont pas de virgules en indice. Sauf dans la partie SGD.}

\section{Introduction}
%\cb{point au para}

Missing data has become an inherent problem in modern data science. Indeed, most real-world data sets contain missing entries due to a variety of reasons: merging different data sources, 
sensor failures, difficulty to collect/access data in sensitive fields (e.g., health), just to name a few. 
%difficulty of acquiring or standardizing data acquisition,
%difficulty to standardize information  merging several data  merging data from multiple sources or the difficulty of acquiring or standardizing data acquisition, most data sets contain missing entries. 
The simple, yet quite extreme, solution of throwing partial observations away can drastically reduce the data set size and thereby hinder further statistical analysis. Specific  methods should be therefore developed to handle missing values. 
Most of them are dedicated to model estimation, aiming at inferring  the underlying model parameters despite missing values  \citep[see, e.g.,][]{RUBIN76}. 
In this paper, we take a different route and consider a supervised machine learning (ML) problem with missing values in the training and test inputs, for which our aim is to build a  prediction function (\textit{and not} to estimate accurately the true model parameters). 
%in presence of missing values in the inputs. 
%rather different problem consisting in predicting the problem of prediction with missing values. change our focus to prediction when training and test samples contain missing entries. 

\paragraph{Prediction with NA}
A common practice to perform supervised learning with missing data is to simply impute the data set first, and then train any predictor on the completed/imputed data set. 
The imputation technique can be simple (e.g., using mean imputation) or more elaborate \citep{van2011mice,yoon2018gain,muzellec2020missing,ipsen2022deal}.
While such widely-used two-step strategies lack deep theoretical foundations, they have been shown to be consistent, provided that the approximation capacity of the chosen predictor is large enough  \citep[see][]{josse2019consistency,le2021sa}. 
% tackle supervised learning with missing data, by resorting to imputation methods to obtain a complete data set on which powerful predictors are then trained. 
% Due to their universal high capacity, the latter can retrieve consistency despite the imputation method used \textit{a priori}.
When considering low-capacity predictors, such as linear models, other theoretically sound strategies consist of decomposing the prediction task with respect to all possible missing patterns  \citep[see][]{le2020linear,ayme2022near} or by automatically detecting relevant patterns to predict, thus breaking the combinatorics of such pattern-by-pattern predictors \citep[see the specific NeuMiss architecture in][]{lemorvan:hal-02888867}. 
Proved to be nearly optimal \citep{ayme2022near}, such approaches are likely to be robust to very pessimistic missing data scenarios. Inherently, they do not scale with high-dimensional data sets, as the variety of missing patterns explodes. 
Another direction is  advocated in \citep{agarwal2019robustness} relying on principal component regression (PCR) in order to train linear models with missing inputs. However, out-of-sample prediction in such a case requires to retrain the predictor on the training and test sets (to perform a global PC analysis), which  strongly departs from classical ML algorithms massively used in practice.

In this paper, we focus on the high-dimensional regime of linear predictors, which will appear to be more favorable to handling missing values via simple and cheap imputation methods, in particular in the missing completely at random (MCAR) case.

\paragraph{High-dimensional linear models} 

In supervised learning with complete inputs, when training a parametric method (such as a linear model) in a high-dimensional framework, one often resorts to an $\ell^2$ or ridge regularization technique. 
On the one hand, such regularization  fastens the optimization procedure (via its convergence rate) \citep{dieuleveut2017harder}; on the other hand, it also improves the generalization capabilities of the trained predictor \citep{caponnetto2007optimal,hsu2012random}. 
In general, this second point holds  for explicit $\ell^2$-regularization, but some works also emphasize the ability of optimization algorithms to induce an implicit regularization, e.g., via early stopping \citep{yao2007early} and more recently via gradient strategies in interpolation regimes \citep{bartlett2020benign,chizat2020implicit,pesme2021implicit}. %\looseness-1
%This penalization has a double advantage. First, the optimization process, performing by gradient descent, is faster \citep{dieuleveut2017harder}. 
%Second, hidden structure can be exploited, for example \citet{caponnetto2007optimal,hsu2012random} obtain good performance assuming that data is well represented on a low dimension space. 
%Note that, some time regularization can be replaced by early stopping \citep{yao2007early}, and recent works,\citep{bartlett2020benign} shows that similar guaranties without penalization for very high dimension. 

\paragraph{Contributions}
For supervised learning purposes, we consider a zero-impu\-ta\-tion strategy consisting in  replacing input missing entries by zero, and we formalize the induced bias on a regression task (Section~\ref{sec:setting}). %formalize the %framework of analysis, the zero-imputation procedure and the corresponding intrinsic bias. 
When the missing values are said Missing Completely At Random (MCAR), we prove that zero imputation, used prior to training a linear model, introduces an implicit regularization closely related to that of ridge regression (Section~\ref{sec:bias}). %, %which results in an optimal predictor of low norm. 
% In Section~\ref{sec:bias}, the bias induced by zero imputation prior to training a linear model is scrutinized 
% %type of MCAR missing values. 
%We establish that this imputation bias is upper bounded by a ridge regression bias with a penalization constant depending on the structure of missing data. \cb{redondant}
This bias is exemplified to be negligible in settings commonly encountered in high-dimensional regimes, e.g., when the inputs admit a low-rank covariance matrix. 
We then advocate for the choice of 
%conduct the analysis of a predictor obtained via 
an averaged stochastic gradient algorithm (SGD) applied on zero-imputed data (Section~\ref{sec:sgd}). Indeed, such a predictor, being computationally efficient, remains particularly relevant for high-dimensional learning.
For such a strategy, we establish a generalization bound valid for all $d,n$, in which the impact of imputation on MCAR data is soothed when $d \gg \sqrt{n}$. 
%All in all, 
%we establish that zero-imputation is a consistent strategy in a high-dimensional linear model with MCAR missing data.
 %under 
%in a high-dimensional framework,  the bias induced by a naive imputation of the inputs is shown to be benign (or  similar to other biases customary of such settings) when learning linear models with MCAR missing data. 
These theoretical results legitimate the widespread imputation approach, adopted by most practitioners, and are corroborated by numerical experiments in Section~\ref{sec:experiments}. All proofs are to be found in the Appendix. 
%Experiments of Section~\ref{sec:experiments} corroborate our findings. 

%\begin{enumerate}
%    \item definir le bias de l'imputation par 0. L'analyser en le comparant au ridge. Montrer que dans de nombreux cas il est benign en grande dimension. Ce control est obtenu pour tous type de données manquante MCAR. 
 %   \item L'imputation par $0$ introduit egallement une regularisation implicite. (le predicteur optimal est de faible norme) \al{de mon pt de vu faut vraiment insisté plus dans le papier sur ça, et ça peu lié la parti bias avec la parti SGD}
%    \item Utiliser ces resultat pour obtenir un control sur un algorithm classique utilisé en apprentissage supervisé (SGD) et ainsi obtenir un borne qui inclu, erreur sto, erreur du au donnée manquantes imputé et erreur d'optimisation. L'algorithm est aussi en compléxité de lecture. 
%\end{enumerate}

\section{Background and motivation}
\label{sec:setting}

\subsection{General setting and notations}
\label{sec:subsec_setting}

In the context of supervised learning, consider $n\in \mathbb N$ input/output observations $((X_i,Y_i))_{i\in [n]}$, i.i.d.~copies of a generic pair $(X,Y)\in \mathbb{R}^d \times \mathbb{R}$. By some abuse of notation, we always use $X_i$ with $i\in [n]$ to denote the $i$-th observation living in $\mathbb{R}^d$, and $X_j$ (or $X_k$)  with $j\in [d]$ (or $k\in [d]$) to denote the $j$-th (or $k$-th) coordinate of the generic input $X$ (see Section~\ref{app:notations} for notations).  

\paragraph{Missing values}
In real data sets, the input covariates $(X_i)_{i\in[n]}$ are often only partially observed. To code for this missing information,  we introduce the random vector $P\in \{0,1\}^d$, referred to as mask or missing pattern, and such that $P_{j}=0$ if the $j$-th coordinate of $X$, $X_{j}$, is missing and $P_{j}=1$ otherwise. The random vectors $P_1, \hdots, P_n$ are assumed to be i.i.d.~copies of a generic random variable $P\in \{0,1\}^d$ and the missing patterns of $X_1,\hdots , X_n$. 
%\es{j'ai ajouté l'hypothèse iid pour le masque ici}
%
Note that we assume that the output is always observed and only entries of the input vectors can be missing. Missing data are usually classified into 3 types, initially introduced by \citep{RUBIN76}. In this paper, we focus on the MCAR assumption where missing patterns and (underlying) inputs are independent.  
\begin{assumption}[Missing Completely At Random - MCAR]\label{ass:MCAR} 
{The pair $(X,Y)$ and the missing pattern $P$ associated to $X$ are independent.}
\end{assumption}
For $j\in [d]$, we define $\rho_j:=\P(P_j=1)$, i.e., $1-\rho_j$ is the  expected proportion of missing values on the $j$-th feature. 
A particular case of MCAR data requires, not only the independence of the mask and the data, but also the independence between all mask components, as follows.
%This scenario, denoted as Bernoulli model, is described in the following assumption.  
\begin{assumptionp}{\ref{ass:MCAR}'}[Ho-MCAR: MCAR pattern with independent homogeneous components]\label{ass:bernoulli_model}
{The pair $(X,Y)$ and the missing pattern $P$ associated to $X$ are independent}, and    
the distribution of $P$ satisfies $P\sim\mathcal{B}(\rho)^{\otimes d}$ for $0< \rho \leq 1$, with $1-\rho$ the expected proportion of missing values, and $\mathcal{B}$ the Bernoulli distribution.
\end{assumptionp}

\paragraph{Naive imputation of covariates} A common way to handle missing values for any learning task is to first impute missing data, to obtain a complete dataset, to which standard ML algorithms can then be applied. 
In particular, constant imputation (using the empirical mean or an oracle constant provided by experts) is very common among practitioners. In this paper, we consider, even for noncentered distributions, the naive imputation by zero,
%Without loss of generality, up to a translation/linear transformation \cb{to be checked} of the data,
so that the imputed-by-0 observation $(X_{\rm{imp}})_i$, for $i\in [n]$, is given by
\begin{equation}\label{eq:Xtilde}
    (X_{\rm{imp}})_i=P_i\odot X_i.
\end{equation}

\paragraph{Risk} 
Let $f:\R^d\to\R$ be a measurable prediction function, %returning a real value, 
based on a complete $d$-dimensional input. 
Its predictive performance can be measured through its quadratic risk, 
\begin{equation}
    \label{eq:risk}
    R(f):= \esp\left[\left(Y-f\left(X\right)\right)^2\right].
\end{equation}
Accordingly, we let $f^\star(X)=\esp[Y|X]$ be the Bayes predictor for the complete case and $R^\star$ the associated risk. 

In the presence of missing data, one can still use the predictor function $f$, applied to the imputed-by-0 input $X_{\imp}$, resulting in the prediction  $f(X_{\imp})$. In such a setting, the risk of $f$, acting on the imputed data, is defined by
\begin{equation}
    \label{eq:risk0}
    R_{\imp}(f):= \esp\left[\left(Y-f(X_{\imp})\right)^2\right].
\end{equation}

For the class $\F$ of linear prediction functions from $\mathbb{R}^d$ to $\mathbb{R}$, we respectively define
\begin{equation}\label{eq:risk_on_a_class}
    R^\star(\F)= \inf_{f\in\F}R(f),
\end{equation}
and
\begin{equation}\label{eq:risk_on_a_class0}
    R_{\imp}^\star(\F)= \inf_{f\in\F}R_{\imp}(f),
\end{equation}
as the infimum over the class $\F$ with respectively complete and imputed-by-0 input data. 

For any linear prediction function defined by $f_\theta(x)=\theta^\top x$ for any $x\in\R^d$ and a fixed $\theta\in\R^d$, as $f_\theta$ is completely determined by the parameter $\theta$, we make the abuse of notation of $R(\theta)$ to designate $R(f_\theta)$ (and $R_{\imp}(\theta)$ for $R_{\imp}(f_\theta)$). We also let $\theta^{\star}\in \R^d$ (resp. $\theta_{\imp}^\star$) be a parameter achieving the best risk on the class of linear functions, i.e., such that $R^\star(\F)=R(\theta^{\star})$ (resp. $R^\star_{\imp}(\F)=R_{\imp}(\theta_{\imp}^{\star})$).

\paragraph{Imputation bias} Even if the prepocessing step consisting of imputing the missing data by $0$ is often used in practice, this imputation technique can introduce a bias in the prediction. 
We formalize this \textit{imputation bias} as 
\begin{equation}\label{eq:bias0}
    B_{\imp}(\F):= R_{\imp}^\star(\F)-R^\star(\F).
\end{equation}
This quantity represents the difference in predictive performance between the best predictor  on complete data and that on 
%a prediction with no missing covariate (all input variables are available) and a prediction with naively 
imputed-by-$0$ inputs. 
In particular, if this quantity is small, the risk of the best predictor on imputed data is close to that of the best predictor when all data are available.  
%It turns out that the quantity $B_{\imp}(\mathcal{F})$ also controls the 
%\al{this means that this naive completion technique does not damage too much the prediction task.}
Note that, in presence of missing values, one might be interested in the Bayes predictor 
%Note that with missing values, one could be interested in the best prediction function with missing data
%is not necessary of the form described above. Indeed, the Bayes predictor 
\begin{equation}
    f_{\mis}^\star(X_{\imp},P)=\esp[Y|X_{\imp},P].
\end{equation}
and its associated risk 
 $R^\star_{\mathrm{mis}}$.
%, which is usually hard to compute. 
\begin{lemma}\label{lem:RmisVsR}
Assume that regression model $Y=f^\star(X)+\epsilon$ is such that $\epsilon$ and $P$ are independent, 
then  $R^{\star}\leq R_{\mathrm{mis}}^{\star}$.
\end{lemma}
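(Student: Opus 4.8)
The plan is to exploit the variational (least-squares) characterization of conditional expectation together with the observation that the imputed pair $(X_{\imp},P)$ carries strictly less information than $(X,P)$. Since $X_{\imp}=P\odot X$ is a measurable function of $(X,P)$, we have the inclusion of $\sigma$-algebras $\sigma(X_{\imp},P)\subseteq\sigma(X,P)$. By definition, $f_{\mis}^\star(X_{\imp},P)=\esp[Y\mid X_{\imp},P]$ is the $L^2(\P)$-orthogonal projection of $Y$ onto the square-integrable $\sigma(X_{\imp},P)$-measurable random variables, so it minimizes $R_{\mis}$ over all measurable functions of $(X_{\imp},P)$; since enlarging the subspace onto which one projects can only decrease the residual norm, one gets
\[
R_{\mis}^\star=\esp\!\left[(Y-\esp[Y\mid X_{\imp},P])^2\right]\ \geq\ \esp\!\left[(Y-\esp[Y\mid X,P])^2\right].
\]

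It then remains to identify the right-hand side with $R^\star$. First, by definition of the Bayes predictor on complete data, $R^\star=R(f^\star)=\esp[(Y-f^\star(X))^2]=\esp[\epsilon^2]$. Second, writing $Y=f^\star(X)+\epsilon$ and using that $f^\star(X)$ is $\sigma(X,P)$-measurable, $\esp[Y\mid X,P]=f^\star(X)+\esp[\epsilon\mid X,P]$. I would then argue that $\esp[\epsilon\mid X,P]=0$: under the MCAR assumption, $\epsilon=Y-f^\star(X)$ is a function of $(X,Y)$, which is independent of $P$, so $(X,\epsilon)$ is independent of $P$ and the conditional law of $\epsilon$ given $(X,P)$ coincides with that given $X$; hence $\esp[\epsilon\mid X,P]=\esp[\epsilon\mid X]=0$, the last equality being exactly the definition $f^\star(X)=\esp[Y\mid X]$. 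Substituting gives $\esp[(Y-\esp[Y\mid X,P])^2]=\esp[\epsilon^2]=R^\star$, which chains with the inequality above to yield $R^\star\leq R_{\mis}^\star$.

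The argument is short, and the only step that genuinely uses the hypothesis is $\esp[\epsilon\mid X,P]=\esp[\epsilon\mid X]$, i.e., that the missingness pattern is uninformative about the noise once $X$ is known; this should be phrased via the equality of conditional distributions (equivalently, via the independence of the pair $(X,\epsilon)$ and $P$) rather than via marginal independence of $\epsilon$ and $P$ alone, which would not suffice. A secondary, routine point is to recall explicitly that the conditional expectation minimizes mean squared error, so that $R_{\mis}^\star=\esp[(Y-f_{\mis}^\star(X_{\imp},P))^2]$; with that in hand, everything reduces to the monotonicity of the projection residual under refinement of the conditioning $\sigma$-algebra.
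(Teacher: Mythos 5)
Your proof is correct and follows essentially the same route as the paper's: identify $\esp[Y\mid X,P]=f^\star(X)$ using the independence of the noise from the mask, then use that $\esp[Y\mid X_{\imp},P]$ is measurable with respect to the finer $\sigma$-algebra $\sigma(X,P)$ so its residual can only be larger. Your closing remark is well taken: the step $\esp[\epsilon\mid X,P]=\esp[\epsilon\mid X]$ genuinely needs independence of the pair $(X,\epsilon)$ from $P$ (or $\epsilon\perp\!\!\!\perp P$ given $X$), which holds under MCAR but is not implied by marginal independence of $\epsilon$ and $P$ alone — a point the paper's one-line justification glosses over.
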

Intuitively, under the classical assumption $\varepsilon \perp\!\!\!\perp P$ \citep[see][]{josse2019consistency}, which is a verified under Assumption~\ref{ass:MCAR},  missing data ineluctably deteriorates the original prediction problem.
%is very classical in missing data literature \citep{josse2019consistency} and corresponds to a well-specified regression model, considered as a function of $X$. \al{@erwan est-ce que vous êtes inspiré pour assumption 1 d'un autre article?}
%\es{, it is simply interpreted by saying that the noise of the complete regression model (without missing data) is not explained by the missing pattern.} Under this assumption, Lemma~\ref{lem:RmisVsR} states that the effect of missing data is to deteroriate the original prediction problem. \al{Tous les papiers de données manquante ont cette hypothése et quand elle y est pas c'est que c'est un oubli (cf nous, marine...)} 
As a direct consequence, for a well-specified linear model on the complete case $f^\star\in\F$, 
%can upper bound the excess risk of a prediction with imputed missing values as
\begin{equation}\label{eq:excessRiskBound}
    R_{\imp}(\mathcal{F})- R^\star_{\mis}\leq %R_{\imp}(\F)-R^\star(\F) = 
    B_{\imp}(\F) .
\end{equation}
%In the case and \eqref{eq:excessRiskBound} leads to $R^\star_{\mis}-R^\star\leq B_{\imp}(\F)$. 
Consequently, in this paper, we focus our analysis on the bias (and excess risk) associated to impute-then-regress strategies with respect to the complete-case problem (right-hand side term of \eqref{eq:excessRiskBound}) thus controlling the excess risk of imputation with respect to the missing data scenario (left-hand side term of \eqref{eq:excessRiskBound}).
%, to control the right part on this inequalities, thus, thanks to \eqref{eq:excessRiskBound}, we also control the classic excess risk.

%
In a nutshell, the quantity $B_{\imp}(\F)$ thus represents how missing values, handled with zero imputation, increase the difficulty of the learning problem. 
This effect can be tempered in a high-dimensional regime, as rigorously studied in Section~\ref{sec:bias}. 
%The following toy example provides intuition 
To give some intuition, let us now study the following toy
example. %illustrates how the imputation bias can decrease with the dimension.
\begin{example}\label{ex:Xtous_egals}
Assume an extremely redundant setting in which all covariates are equal, that is, for all $j\in[d]$, $X_j=X_1$  with $\esp\left[X_1^2\right]=1$. Also assume that the output is such that $Y=X_1$ and that  \cref{ass:bernoulli_model} holds with $\rho=1/2$. In this scenario, due to the input redundancy, all $\theta$ satisfying $\sum_{j=1}^d \theta_j=1$ minimize $\theta \mapsto R(\theta)$. Letting, for example,  $\theta_1=(1,0,...,0)^\top$, we have $R^\star = R(\theta_1) =0$ but
\begin{equation*}
    R_{\imp}(\theta_1)= \esp\left[(X_1-P_1X_1)^2\right]= \frac{1}{2}.
\end{equation*}
This choice of $\theta_1$ introduces an irreducible discrepancy between the risk computed on the imputed data and the Bayes risk $R^\star = 0$. Another choice of parameter could actually help to close this gap. 
Indeed, by exploiting the redundancy in covariates, the parameter $\theta_2=(2/d,2/d,...,2/d)^\top$ (which is not a minimizer of the initial risk anymore) gives
\begin{align*}
    R_{\imp}(\theta_2)&= \esp\bigg[\Big(X_1-\frac{2}{d}\sum_{j=1}^d P_j X_j\Big)^2\bigg]
    %\\
    %&= 4\esp\left[X_1^2\left(\frac{1}{d}\sum_{j=1}^d P_j-\frac{1}{2}\right)^2\right]\\
    %&= 4\esp\left[X_1^2\right]\esp\left[\left(\frac{1}{d}_{j=1}^d P_j-\frac{1}{2}\right)^2\right]\\
    %&= 4\frac{\mathbb{V}\left[P_1\right]}{d}\\
    = \frac{1}{d},
\end{align*}
%\es{est-ce qu'on peut montrer que ce choix de $\theta$ est optimal?}\al{non, le choix opti est plus dégueux en terme de calcul (ici je voulais une preuve rapide)}
so that the imputation bias $B_{\imp}(\F)$ is bounded by $1/d$, tending to zero as the dimension increases. Two other important observations on this example follow. First, this bound is still valid if $\esp X_1 \neq 0$, thus the imputation by $0$ is still relevant even for non-centered data. Second, we remark that $\Vert \theta_2\Vert_2^2=4/d$, thus good candidates to predict with imputation seem to be of small norm in high dimension. This will be proved for more general settings, in Section~\ref{sec:sgd}. 
\end{example}
The purpose of this paper is to generalize the phenomenon described in \Cref{ex:Xtous_egals} to less stringent settings. In light of this example, we focus our analysis on scenarios for which some information is shared across input variables: for linear models, correlation plays such a role.
%is a natural way to measure this redundancy across covariates. 
%In particular, it is the redundancy of $X_1$ the important thing to have bias decreasing in \cref{ex:Xtous_egals}. Another point of view is to see the strong correlation between covariates.
%Thus, we introduce now the singular values of the covariance matrix.     

\paragraph{Covariance matrix} For a generic complete input $X\in \R^d$, call $\Sigma:=\esp\left[XX^\top\right]$ the associated covariance matrix, 
%\cb{les inputs sont centrés ? Si oui, à préciser avant.}\al{non, pas forcement centrés, bcp d'articles parlent quand meme de "covariance matrix" mais c'est un abus de language}
admitting the following singular value decomposition
\begin{equation}\label{eq:SigmaSVD}
    \Sigma= \sum_{j=1}^d \lambda_jv_jv_j^\top,
\end{equation}
where $\lambda_j$ (resp.\ $v_j$) are singular values (resp.\ singular vectors) of $\Sigma$ and such that $\lambda_1\geq...\geq\lambda_d$. The associated pseudo-norm is given by, for all $\theta\in\R^d$,
\begin{equation*}
    \Vert \theta\Vert_\Sigma^2:=\theta^\top\Sigma\theta=\sum_{j=1}^d \lambda_j(v_j^\top\theta)^2.
\end{equation*}
For the best linear prediction, $Y= X^\top\theta^{\star}+\epsilon$, and the noise satisfies $\esp[\epsilon X]=0$ (first order condition). Denoting $\esp[\epsilon^2]=\sigma^2$, we have 
\begin{equation}\label{eq:espY^2}
    \esp Y^2=\Vert \theta^\star\Vert_\Sigma^2+\sigma^2= \sum_{j=1}^d \lambda_j(v_j^\top\theta^{\star})^2+\sigma^2.
\end{equation}
The quantity $\lambda_j(v_j^\top\theta^{\star})^2$ can be therefore interpreted as the part of the variance explained by the singular direction $v_j$.  
\begin{remark}
Note that, in the setting of \cref{ex:Xtous_egals}, $\Sigma$ has a unique positive singular values $\lambda_1=d$, that is to say, all of the variance is concentrated on the first singular direction. Actually, our analysis will stress out that a proper decay of singular values leads to low imputation biases. 
\end{remark}
Furthermore, for the rest of our analysis, we need the following assumptions on the second-order moments of $X$.
%, that are valid up to a proper rescaling of the data.  
\begin{assumption}\label{ass:2moment_sup}
    $\exists L < \infty$ such that, $\forall j\in[d]$, $\esp[X_j^2]\leq L^2$. 
\end{assumption}
\begin{assumption}\label{ass:2moment_inf}
    $\exists \ell >0$ such that, $\forall j\in[d]$, 
    $\esp[X_j^2]\geq \ell^2$. 
\end{assumption}
For example, Assumption \ref{ass:2moment_sup} and \ref{ass:2moment_inf} hold with $L^2=\ell^2=1$ with normalized data.

% Note that under the spectrum decay for $\Sigma$, \Cref{ass:2moment_inf} 
 
% When eigenvalues decrease, \Cref{ass:2moment_inf}, and its stronger version \Cref{ass:2moment_eq}, lead to correlation between features. Indeed, the uncorrelated case (characterized by a diagonal covariance matrix $\Sigma$) combined with \Cref{ass:2moment_eq} leads to a flat spectra $\lambda_j=1$ for all $j\in [d]$. Note that these assumptions hold when the data are re-normalized component by component. }

\section{Imputation bias for linear models}\label{sec:bias}

\subsection{Implicit regularization of imputation}
\label{subsec:implicit_regularization}

%We introduce the ridge regressor, as our analysis will highlight the connection between ridge regularization and zero-imputation. 
Ridge regression, widely used in high-dimensional settings, and notably for its computational purposes,  amounts to form an $\ell_2$-penalized version of the least square estimator: 
\[
\hat{\theta}_{\lambda}\in\argmin_{\theta\in\R^d}\left\{ \frac{1}{n}\sum_{i=1}^{n}\left(Y_{i}-f_\theta(X_{i})\right)^{2}+\lambda\left\Vert \theta\right\Vert _{2}^{2}\right\}, 
\]
where $\lambda> 0$ is the penalization parameter. The associated generalization risk can be written as
\[
R_{\lambda}(\theta):= R(\theta)+\lambda\left\Vert \theta\right\Vert _{2}^{2}. 
\]
%Due to the introduced regularization, the ridge bias  associated to this type of regressors reads as 
%\begin{align}\label{eq:def_Bridge}
%B_{{\rm {ridge},\lambda}}(\F)  &:=\inf_{\theta\in\R^{d}}\left\{ R_\lambda (\theta)-R^\star(\F)\right\}\\&=\lambda \left\Vert\theta^{\star}\right\Vert _{\Sigma(\Sigma+\lambda I)^{-1}}^{2}. 
%\end{align}
%\replace{The following lemma shows that the (unregularized) risk $R_{\imp}(\F)$ for zero imputed data as a
%ridge-penalized risk $R_\lambda$, for a particular $\lambda >0$.}{
\Cref{prop:regularization_implicite} establishes a link between imputation and ridge penalization.
\begin{propo}\label{prop:regularization_implicite} Under \Cref{ass:MCAR}, let $V$ be the covariance matrix of $P$ ($V_{ij}=\mathrm{Cov}(P_i,P_j)$) and $H=\mathrm{diag}(\rho_1,\dots,\rho_d)$, with $\rho_j=\P(P_j=1)$. Then, for all $\theta$, 
    \[
R_{\imp}(\theta)=R\left(H\theta\right)+\left\Vert \theta\right\Vert _{V\odot\Sigma}^{2}.
\]
In particular, under Assumptions~\ref{ass:bernoulli_model}, \ref{ass:2moment_sup} and \ref{ass:2moment_inf} when $L^2=\ell^2$,
\begin{align}
R_{\imp}(\theta)=R\left(\rho\theta\right)+L^2\rho(1-\rho)\left\Vert \theta\right\Vert _{2}^{2}.
\label{eq:pen_ridge_ref_equality_case}
\end{align}
\end{propo}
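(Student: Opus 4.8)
The plan is to expand the imputed risk directly and use that, under \Cref{ass:MCAR}, the mask $P$ is independent of the pair $(X,Y)$. Write $\theta^\top X_{\imp} = \theta^\top(P\odot X) = \sum_{j=1}^d \theta_j P_j X_j$, so that
$R_{\imp}(\theta) = \esp[Y^2] - 2\,\esp[Y\,\theta^\top(P\odot X)] + \esp[(\theta^\top(P\odot X))^2]$. Independence lets me factor the moments of $P$ out of both the cross term and the quadratic term: $\esp[Y\,\theta_j P_j X_j] = \rho_j\theta_j\,\esp[YX_j]$, hence the cross term equals $(H\theta)^\top\esp[YX]$.

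For the quadratic term I would use $\esp[P_jP_k] = \mathrm{Cov}(P_j,P_k) + \rho_j\rho_k = V_{jk} + \rho_j\rho_k$ (valid also for $j=k$, since $\esp[P_j^2]=\rho_j = \rho_j(1-\rho_j)+\rho_j^2$) together with $\esp[X_jX_k] = \Sigma_{jk}$. Summing over $j,k$, the quadratic term splits as $\sum_{j,k}\theta_j\theta_k V_{jk}\Sigma_{jk} + \sum_{j,k}(\rho_j\theta_j)(\rho_k\theta_k)\Sigma_{jk} = \norm{\theta}_{V\odot\Sigma}^2 + (H\theta)^\top\Sigma(H\theta)$. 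Collecting terms, $R_{\imp}(\theta) = \big(\esp[Y^2] - 2(H\theta)^\top\esp[YX] + \norm{H\theta}_\Sigma^2\big) + \norm{\theta}_{V\odot\Sigma}^2$, and recognizing the parenthesized expression as the expansion of $R(H\theta) = \esp[(Y-(H\theta)^\top X)^2]$ gives the first identity.

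For the specialization under \Cref{ass:bernoulli_model}, the Bernoulli product structure gives $\rho_j=\rho$ for all $j$, i.e.\ $H=\rho I_d$, and independence of the coordinates of $P$ forces $V = \rho(1-\rho)I_d$. Then $V\odot\Sigma = \rho(1-\rho)\,\mathrm{diag}(\esp[X_1^2],\dots,\esp[X_d^2])$; under \Cref{ass:2moment_sup} and \Cref{ass:2moment_inf} with $L^2=\ell^2$ every diagonal entry of $\Sigma$ equals $L^2$, so $\norm{\theta}_{V\odot\Sigma}^2 = L^2\rho(1-\rho)\norm{\theta}_2^2$ and $R(H\theta)=R(\rho\theta)$, which is \eqref{eq:pen_ridge_ref_equality_case}.

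The argument is essentially routine once independence of $P$ and $(X,Y)$ is invoked; the only point needing care is the bookkeeping in the quadratic term — correctly separating the diagonal contribution $\esp[P_j^2]=\rho_j$ from the off-diagonal covariances, and checking that the Hadamard product $V\odot\Sigma$ collects exactly the terms $\mathrm{Cov}(P_j,P_k)\Sigma_{jk}$ while the remaining $\rho_j\rho_k\Sigma_{jk}$ terms reassemble into $\norm{H\theta}_\Sigma^2$. That is where I expect the (mild) obstacle to lie.
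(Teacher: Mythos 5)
Your proof is correct: the cross term and the quadratic term are both handled properly, and the key bookkeeping step — splitting $\esp[P_jP_k]=V_{jk}+\rho_j\rho_k$ (including on the diagonal, where $\esp[P_j^2]=\rho_j$) so that the $V_{jk}\Sigma_{jk}$ pieces form $\norm{\theta}_{V\odot\Sigma}^2$ while the $\rho_j\rho_k\Sigma_{jk}$ pieces reassemble into $\norm{H\theta}_\Sigma^2$ — is exactly the point where the identity lives, and you got it right. The specialization under \Cref{ass:bernoulli_model} is also correct. The paper reaches the same conclusion by a slightly different organization: instead of expanding the square directly, it proves a general decomposition lemma (Lemma~\ref{lem:Bias_variance_X'}) for any modified input $X_a$ with $(X_a\perp\!\!\!\perp Y)\,|\,X$, writing $Y-\theta^\top X_a$ as $(Y-\theta^\top\esp[X_a|X])+(\theta^\top\esp[X_a|X]-\theta^\top X_a)$ and using conditional orthogonality to kill the cross term; the residual variance matrix $\Gamma=\esp[(X_a-\esp[X_a|X])(X_a-\esp[X_a|X])^\top]$ is then computed to be $\Sigma\odot V$ in the imputation case, which is algebraically the same covariance splitting you perform. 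Your version is more elementary and self-contained; the paper's version buys reusability — the same lemma also delivers the lower bound in Theorem~\ref{prop:biasBernoulliHomo}, the general MCAR bound of Proposition~\ref{prop:biasMCAR}, and the norm control of Lemma~\ref{lem:norm_control} via the inequality $\norm{\theta_a^\star}_\Gamma^2\leq B_a(\F)$, none of which fall out of the direct expansion without redoing work.
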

This result highlights the implicit $\ell^2$-regularization at work: performing standard regression on zero-imputed ho-MCAR data can be seen as performing a ridge regression on complete data, whose strength $\lambda$ depends on the missing values proportion. 
%with a specific regularization hyper-parameter $\lambda$ expressed in terms of the input missing rate. %\ad{un peu redondant}
%The above lemma shows that $R_{\imp}$ is equal to the risk without missing values $R$ plus a kind of penalization term. 
%Under \Cref{ass:bernoulli_model} (Ho-MCAR), this term is a $\ell_2$ penalization with parameter $\rho(1-\rho)$. 
More precisely, using Equation \eqref{eq:pen_ridge_ref_equality_case}, the optimal predictor $\theta_{\imp}^\star$
working with imputed samples verifies 
\begin{equation*}
    \theta_{\imp}^\star = \frac{1}{L^2\rho}\argmin_{\theta\in \R^d}\left\{R\left(\theta\right)+\lambda_{\imp}\left\Vert \theta\right\Vert _{2}^{2}\right\},
\end{equation*}
with $\lambda_{\imp}:=L^2\left( \frac{1-\rho}{\rho} \right)$. 
%This reformulation of the theoretical risk for imputed-by-0 data outlines the implicit $\ell^2$-regularization at work which strength depends on the missing values proportion. 
%Thus, everything happens as if we had a ridge penalization with a parameter depending only of the missing values proportion. 
%However, this penalty is implicit because it is already included in $R_{\imp}$ and brought by the presence of imputed missing data. 
We exploit this correspondence in \Cref{sec:bias_and_Sigma} and \ref{sec:biasBernoullihetero} to %quantify the bias of zero imputation and to
control the imputation bias.
%and to upper bound the overall generalization error of estimate  $\theta_{\imp}^\star$, in \Cref{sec:sgd}.  

%Ridge regularization adds a bias in the excess risk that is upper bounded  \citep{hsu2012random,dieuleveut2017harder,mourtada2019exact} by the quantity 
%\begin{align}\label{eq:def_Bridge}
%B_{{\rm {ridge},\lambda}}(\F)  &=\inf_{\theta\in\R^{d}}\left\{ \lambda\left\Vert \theta\right\Vert _{2}^{2}+\left\Vert \theta-\theta^{\star}\right\Vert _{\Sigma}^{2}\right\}\\&=\lambda \left\Vert\theta^{\star}\right\Vert _{\Sigma(\Sigma+\lambda I)^{-1}}^{2},
%\end{align}
%which highlights that studying the ridge bias is closely related to the structure of the covariance matrix $\Sigma$ and that of the best predictor $\theta^{\star}$. 

%%\begin{remark}
%Using a singular values decomposition of $\Sigma$,
%\begin{equation*}
%    B_{{\rm {ridge},\lambda}}(\F)  =\sum_{j=1}^d \frac{\lambda_j(v_j^\top\theta^\star)^2}{\lambda_j+\lambda}.
%\end{equation*}
%The ridge bias depends of the spectrum shape of $\Sigma$ but not only, it depends of the values of the projection of $\theta^\star$ in the eigendirections of $\Sigma$. Note that the numerator $\lambda_j(v_j^\top\theta^\star)^2$ is the explained variance by direction $v_j$. Thus, if these parts are concentrated on the largest eigenvalues then the bias are low.   
%\end{remark}

\subsection{Imputation bias for linear models with ho-MCAR missing inputs}\label{sec:bias_and_Sigma}

When the inputs admit ho-MCAR missing patterns (\cref{ass:bernoulli_model}), the zero-imputation bias $B_{\imp}(\F)$ 
induced in the linear model is controlled by a particular instance of the ridge regression bias \citep[see, e.g.,][]{hsu2012random,dieuleveut2017harder,mourtada2019exact}, defined in general by
\begin{align}\label{eq:def_Bridge}
B_{{\rm {ridge},\lambda}}(\F)  &:=\inf_{\theta\in\R^{d}}\left\{ R_\lambda (\theta)-R^\star(\F)\right\}\\&=\lambda \left\Vert\theta^{\star}\right\Vert _{\Sigma(\Sigma+\lambda I)^{-1}}^{2}. 
\end{align}
\begin{theorem}\label{prop:biasBernoulliHomo}
Under Assumption~\ref{ass:bernoulli_model}, \ref{ass:2moment_sup}, and \ref{ass:2moment_inf}, one has
%define $\lambda_{\imp} := \frac{1-\rho}{\rho}$ and
%assume that, for all $j\in[d]$,  $\esp\left[X_j^2\right]\leq 1$. Then,  
\begin{equation*}
 B_{{\rm {ridge},\lambda_{\imp}'}}(\F) \leq B_{\imp}(\F)\leq B_{{\rm {ridge},\lambda_{\imp}}}(\F),    
\end{equation*}
with $\lambda_{\imp}':= \ell^2 \left(\frac{1-\rho}{\rho}\right) $ and $\lambda_{\imp} = L^2 \left(\frac{1-\rho}{\rho}\right)$. 
%Furthermore, under ,  $$ B_{\imp}(\F)\geq B_{{\rm {ridge},\lambda_{\imp}'}}(\F),$$
%with . 

\end{theorem}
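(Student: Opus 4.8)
The plan is to read off the result from the implicit‑regularization identity of \Cref{prop:regularization_implicite} by specializing it to the ho‑MCAR case and then sandwiching the resulting quadratic penalty with the second‑moment bounds. First I would instantiate \Cref{prop:regularization_implicite} under \Cref{ass:bernoulli_model}: since the components of $P$ are i.i.d.\ $\mathcal B(\rho)$, we have $H=\rho I$ and $V=\rho(1-\rho) I$, so $V\odot\Sigma=\rho(1-\rho)\,\mathrm{diag}(\esp[X_1^2],\dots,\esp[X_d^2])$. Hence, for every $\theta\in\R^d$,
\[
R_{\imp}(\theta)=R(\rho\theta)+\rho(1-\rho)\sum_{j=1}^{d}\esp[X_j^2]\,\theta_j^2 .
\]

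Next I would bound the penalty term pointwise in $\theta$. By \Cref{ass:2moment_sup,ass:2moment_inf}, $\ell^2\le\esp[X_j^2]\le L^2$ for all $j$, whence $\rho(1-\rho)\ell^2\norm{\theta}_2^2\le\rho(1-\rho)\sum_j\esp[X_j^2]\theta_j^2\le\rho(1-\rho)L^2\norm{\theta}_2^2$, so that
\[
R(\rho\theta)+\rho(1-\rho)\ell^2\norm{\theta}_2^2\;\le\;R_{\imp}(\theta)\;\le\;R(\rho\theta)+\rho(1-\rho)L^2\norm{\theta}_2^2 .
\]
Taking the infimum over $\theta\in\R^d$ preserves both inequalities, and I then identify the two outer infima with ridge biases via the change of variable $\eta=\rho\theta$, a bijection of $\R^d$ because $\rho>0$ under \Cref{ass:bernoulli_model}. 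This gives, for any constant $c\ge0$,
\[
\inf_{\theta\in\R^d}\Big\{R(\rho\theta)+c\,\rho(1-\rho)\norm{\theta}_2^2\Big\}
=\inf_{\eta\in\R^d}\Big\{R(\eta)+c\,\tfrac{1-\rho}{\rho}\norm{\eta}_2^2\Big\}
=R^\star(\F)+B_{\mathrm{ridge},\,c(1-\rho)/\rho}(\F),
\]
the last equality being the definition \eqref{eq:def_Bridge} of the ridge bias. Applying this with $c=\ell^2$ and $c=L^2$ and then subtracting $R^\star(\F)$ — i.e.\ using the definition \eqref{eq:bias0} of $B_{\imp}(\F)=R^\star_{\imp}(\F)-R^\star(\F)$ — yields exactly $B_{\mathrm{ridge},\lambda_{\imp}'}(\F)\le B_{\imp}(\F)\le B_{\mathrm{ridge},\lambda_{\imp}}(\F)$ with $\lambda_{\imp}'=\ell^2(1-\rho)/\rho$ and $\lambda_{\imp}=L^2(1-\rho)/\rho$.

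The argument is essentially bookkeeping once \Cref{prop:regularization_implicite} is in hand, so there is no serious obstacle; the two points deserving care are that the sandwiching remains order‑consistent after passing to the infimum — which it does, since the bounds hold for each fixed $\theta$ — and that the rescaling $\eta=\rho\theta$ is an exact reparametrization introducing no slack rather than a relaxation. The degenerate case $\rho=1$ (no missing data) is immediate, since then $\lambda_{\imp}'=\lambda_{\imp}=0$ and $B_{\imp}(\F)=0$.
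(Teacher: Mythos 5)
Your proof is correct and follows essentially the same route as the paper: both reduce the claim to the identity $R_{\imp}(\theta)=R(\rho\theta)+\rho(1-\rho)\left\Vert\theta\right\Vert_{\mathrm{diag}(\Sigma)}^{2}$, sandwich the penalty using $\ell^{2}\leq\esp[X_j^2]\leq L^{2}$, and absorb the factor $\rho$ via the reparametrization $\eta=\rho\theta$ (which the paper packages inside Equation~\eqref{eq:BVA_risk_star} of its general lemma). The only difference is cosmetic — you bound the penalty before changing variables, the paper changes variables first and bounds $\Sigma\odot C$ afterwards — so no further comment is needed.
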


As could be expected from \Cref{prop:regularization_implicite}, the zero-imputation bias is {lower and upper-bounded} by the ridge bias, with a  penalization constant depending on the fraction of missing values.  %The ridge bias is a non-decreasing function of the regularization hyper-parameter. 
In the specific case where $ \ell^2=L^2$ (same second-order moment), the imputation bias exactly equals a ridge bias with a constant $L^2(1 - \rho)/\rho$. Besides, in the extreme case where there is no missing data ($\rho=1$) then $\lambda_{\imp}=0$,  and the bias vanishes. On the contrary, if there is a large percentage of missing values ($\rho \to 0$) then  $\lambda'_{\imp} \to +\infty$ and the imputation bias amounts to the excess risk of the naive predictor, i.e., $B_{\imp}(\F)=R(0_{\R^d})-R^\star(\F)$. For the intermediate case where half of the data is likely to be missing ($\rho=1/2$), we obtain $\lambda_{\imp}=L^2$.

Thus, in terms of statistical guarantees, performing linear regression on imputed inputs suffers from a bias comparable to that of a ridge penalization, but with a fixed hyperparameter $\lambda_{\imp}$. Note that, when performing standard ridge regression in a high-dimensional setting, the best theoretical choice of the penalization parameter usually scales as $d/n$ \citep[see][for details]{sridharan2008fast,hsu2012random,mourtada2022elementary}. 
If $\rho \gtrsim L^2\frac{n}{d+n}$ (which is equivalent to $\lambda_{\imp}\lesssim \frac{d}{n}$), the imputation bias remains smaller than that of the ridge regression with the optimal hyperparameter $\lambda = d/n$ (which is commonly accepted in applications). 
%\replace{If the percentage of missing values is negligible compared to $d/(d+n)$, then the bias related to zero-imputation in a linear model is negligible compared to that of a standard ridge regression.}{}
In this context, performing zero-imputation prior to applying a ridge regression allows handling easily missing data without drastically increasing the overall bias.
%In this context, performing standard regression on imputed-by-0 data does not introduce a  bias larger than that of standard ridge regression, and which remains a lesser evil than a lot of  practitioners are willing to pay. \es{c'est un peu un jugement de valeur ça}%(which is commonly accepted).  

%%%%%%%%%%%%%%%
%% SUPER TOURNURE
%This is a good news, because, bias ridge is a lesser evil than a lot of  practitionesr are willing to pay. Thus, imputation bias is not a additional cost. } 
%%%%%%%%%%%%%%%

In turns out that the bias of the ridge regression in random designs, and thus the imputation bias, can be controlled, under 
classical assumptions about low-rank covariance structures
\citep[][]{caponnetto2007optimal,hsu2012random,dieuleveut2017harder}. {In all following examples, we consider that $\mathrm{Tr}(\Sigma)=d$, which holds in particular for normalized data. }
%, show that if the covariance matrix $\Sigma$ can be well approximated by a low-rank matrix,
%meaning that only a few eigendirections are significant, 
%a theoretical control of the ridge bias can be obtained.
%We evaluate hereafter the imputation bias in such settings.  

\begin{example}[Low-rank covariance matrix with equal singular values]\label{ex:low_rank}
Consider a covariance matrix with a low rank $r\ll d$ and constant eigenvalues ($\lambda_1=\dots=\lambda_r=\frac{d}{r}$). Then $\Sigma(\Sigma+\lambda_{\imp} I)^{-1}\preceq \lambda_r^{-1} \Sigma=\frac{r}{d}\Sigma$ and \cref{prop:biasBernoulliHomo} leads to 
\begin{equation*}
    B_{\imp}(\F)\leq\lambda_{\imp}\frac{r}{d}\left\Vert \theta^{\star}\right\Vert _{\Sigma}^{2}.
\end{equation*}
Hence, the imputation bias is small when  $r\ll d$ (low-rank setting). 
Indeed, for a fixed dimension, when the covariance is low-rank, there is a lot of redundancy across variables, which helps counterbalancing missing information in the input variables, thereby reducing the prediction bias.
\end{example}
Note that
\cref{ex:low_rank} ($r\ll d$) is a generalization of \cref{ex:Xtous_egals} (in which $r=1$), and is rotation-invariant contrary to the latter. 

\begin{remark}
%\es{vrai si $\esp \varepsilon | X = 0$?}
%\al{C'est vrai des que $\esp[\epsilon X]=0$ condition de 1er ordre, à rappeler plus haut}
A first order condition (see equation \eqref{eq:espY^2}) implies that $ \Vert \theta^\star\Vert_\Sigma^2+\sigma^2 = \esp Y^2=R\left(0_{\R^d}\right)$, which is independent of the dimension~$d$. Thus, in all our upper bounds, $\Vert \theta^\star\Vert_\Sigma^2$ can be replaced by $\esp Y^2$, which is dimension-free. Consequently, we can interpret \Cref{ex:low_rank} (and the following examples) upper bound as follows: if $r\ll d$, then  the risk of the naive predictor is divided by $d/r \gg 1$. As a consequence, $B_{\imp}$ tends to zero when the dimension increases and the rank is fixed. 
\end{remark}

\begin{example}[Low-rank covariance matrix compatible with $\theta^\star$ ]\label{ex:low_rank2}
Consider a covariance matrix with a low rank $r\ll d$ and assume that $\langle \theta^\star,v_1\rangle^2\geq \dots\geq \langle \theta^\star,v_d\rangle^2 $ (meaning that $\theta^\star$ is well represented with the 
 first eigendirections of $\Sigma$), \cref{prop:biasBernoulliHomo} leads to 
\begin{equation*}
    B_{\imp}(\F)\lesssim \lambda_{\imp}\frac{r(\log(r)+1)}{d}\left\Vert \theta^{\star}\right\Vert _{\Sigma}^{2}.
\end{equation*}
% Comparing with \cref{ex:low_rank}, there is an additional benign factor $\log(r)$ but assumption about eigenvalues are replaced by assumption on $\theta^\star$ in the eigendirections of $\Sigma$.
This result is similar to \cref{ex:low_rank} (up to a log factor), except that assumptions on the eigenvalues of $\Sigma$ have been replaced by a condition on the compatibility between the covariance structure and $\theta^\star$.
If $\theta^\star$ is well explained by the largest eigenvalues then the imputation bias remains low. 
This underlines that imputation bias does not only depend on the spectral structure of $\Sigma$ but also on $\theta^\star$. 
\end{example}

\begin{example}[Spiked model, \citet{johnstone2001}]\label{ex:noised_low_rank}
In this model, the  covariance matrix can be decomposed as $\Sigma= \Sigma_{\leq r}+ \Sigma_{> r} $ where $\Sigma_{\leq r}$ corresponds to the low-rank part of the data with large eigenvalues and $\Sigma_{> r}$  to the residual high-dimensional data. Suppose that $\Sigma_{> r}\preceq \eta I$ (small operator norm) and that all non-zero eigenvalues of $\Sigma_{\leq r}$ are equal, then   \cref{prop:biasBernoulliHomo} gives
\begin{equation*}
    B_{\imp}(\F)\leq\frac{\lambda_{\imp}}{1-\eta}\frac{r}{d}\left\Vert \theta^{\star}\right\Vert _{\Sigma}^{2}+\eta \left\Vert \theta^{\star}_{>r}\right\Vert _{2}^{2}, 
\end{equation*}
where $\theta^{\star}_{>r}$ is the projection of $\theta^{\star}$ on the range of $\Sigma_{> r}$. Contrary to \cref{ex:low_rank}, $\Sigma$ is only \textit{approximately} low rank, 
%of full rank, thus the dataset is not, strictly speaking, low-rank. But, it can be well approximated by a rank-$r$ matrix (with $r\ll d$), 
and one can refer to $r$ as the ``effective rank" of $\Sigma$ \citep[see][]{bartlett2020benign}. 
The above upper bound admits a term in $O(r/d)$ (as in \cref{ex:low_rank}), but also suffers from a non-compressible part $\eta \left\Vert \theta^{\star}_{>r}\right\Vert _{2}^{2}$, due to the presence of residual (potentially noisy) high-dimensional data. Note that, if $\theta^{\star}_{>r}=0$ (only the low-dimensional part of the data is informative) then we retrieve the same rate as in \cref{ex:low_rank}.
\end{example}

% \begin{remark}[Link between Assumption~\ref{ass:2moment_inf} and Spectral structure]
% Note that all our examples are based on a particular structure of the covariance matrix, that is an eigenvalue decay. Indeed, under Assumption~\ref{ass:2moment_eq}, an eigenvalue decay implies a correlation structure of the inputs, which is the key element of our analysis to make imputation effective in high dimension. 
% \end{remark}

\subsection{Imputation bias for linear models and general MCAR settings}
\label{sec:biasBernoullihetero}

\cref{prop:biasBernoulliHomo} holds only for Ho-MCAR settings, which excludes the case of dependence between mask components. To cover the case of dependent variables $P_1, \hdots , P_d$ under \cref{ass:MCAR}, recall $\rho_j:=\P(P_j=1)$ the probability that the component $j$ is not missing, and define the matrix $C \in \mathbb{R}^{d\times d}$ associated to $P$, given by:
\begin{equation}\label{eq:corrMask} 
    C_{kj}:= \frac{V_{k,j}}{\rho_k\rho_j}, \quad (k,j)\in [d]\times [d].
\end{equation}
%\ad{à def page 1}
%Matrix $C$ characterizes correlation and missing data proportion both. In particular, let's define, 
Furthermore, under \Cref{ass:2moment_sup}, define
\begin{equation}\label{eq:defKM}
\Lambda_{\imp}:= L^2\lambda_{\rm{max}}(C). 
\end{equation}
%\es{on n'aurait pas envie de rajouter un indice $d$ à $\Lambda_{imp}$ ?}
%\cb{bizarre que ce soit une fonction d'une variable aléatoire}
%\al{oui, j'ai remplacé par $C$, sinon on enlever l'argument et mettre $\Lambda_{\imp}$?}
The following result  establishes an upper bound on the imputation bias for general MCAR settings. 
\begin{propo}\label{prop:biasMCAR}
Under \Cref{ass:MCAR} and \ref{ass:2moment_sup}, we have
\[
B_{\imp}(\F)\leq B_{{\rm {ridge},\Lambda_{\imp}}}(\F).
\]
\end{propo}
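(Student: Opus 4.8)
The plan is to generalize the proof of \Cref{prop:biasBernoulliHomo}, replacing the scalar relation $V \odot \Sigma = L^2 \rho(1-\rho) I$ (valid in the Ho-MCAR case with $L^2 = \ell^2$) by the matrix inequality $V \odot \Sigma \preceq \Lambda_{\imp} \, H^2$, and then optimizing over $\theta$ exactly as in the ridge bias computation. First I would start from the exact decomposition of \Cref{prop:regularization_implicite}, namely $R_{\imp}(\theta) = R(H\theta) + \|\theta\|_{V \odot \Sigma}^2$, and change variables by setting $\eta = H\theta$ (legitimate since $H$ is invertible under \Cref{ass:2moment_inf}, or at worst restricting to coordinates with $\rho_j > 0$). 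This turns $\inf_\theta R_{\imp}(\theta)$ into $\inf_\eta \{ R(\eta) + \|H^{-1}\eta\|_{V \odot \Sigma}^2 \}$, and subtracting $R^\star(\F)$ gives $B_{\imp}(\F) = \inf_\eta \{ R(\eta) - R^\star(\F) + \eta^\top (H^{-1}(V\odot\Sigma)H^{-1}) \eta \}$.

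The key algebraic step is to identify $H^{-1}(V \odot \Sigma) H^{-1}$ with $C \odot \Sigma$, where $C_{kj} = V_{kj}/(\rho_k \rho_j)$ as defined in \eqref{eq:corrMask}: indeed $(H^{-1}(V\odot\Sigma)H^{-1})_{kj} = \rho_k^{-1} V_{kj}\Sigma_{kj}\rho_j^{-1} = C_{kj}\Sigma_{kj}$. Then I would invoke the Schur product bound: since $\Sigma \succeq 0$ with diagonal entries $\Sigma_{jj} = \esp[X_j^2] \le L^2$, one has $C \odot \Sigma \preceq \lambda_{\max}(C) \cdot \mathrm{diag}(\Sigma) \preceq L^2 \lambda_{\max}(C) I = \Lambda_{\imp} I$. (This uses the standard fact that for PSD matrices $A, B$, $A \odot B \preceq \max_j B_{jj} \cdot A$, a consequence of $A \odot B \preceq A \odot (\max_j B_{jj} I) = \max_j B_{jj} \cdot \mathrm{diag}(A)$ combined with $\mathrm{diag}(A) \preceq \lambda_{\max}(A) I$ — or more directly the Schur product theorem applied to $A \odot (\max_j B_{jj} I - B) \succeq 0$.) Hence $\eta^\top (C\odot\Sigma)\eta \le \Lambda_{\imp}\|\eta\|_2^2$ for all $\eta$, so $B_{\imp}(\F) \le \inf_\eta \{ R(\eta) - R^\star(\F) + \Lambda_{\imp}\|\eta\|_2^2 \} = B_{\mathrm{ridge},\Lambda_{\imp}}(\F)$, which is the claim.

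The main obstacle I anticipate is making the Schur-product operator-norm bound fully rigorous and checking it genuinely only needs \Cref{ass:2moment_sup} (the upper bound $L^2$) and not \Cref{ass:2moment_inf}: the change of variables $\eta = H\theta$ must be handled carefully when some $\rho_j$ could be zero — but $\rho_j = 0$ means coordinate $j$ is always missing, so it drops out of $R_{\imp}$ entirely and can be discarded, and on the surviving coordinates $H$ is invertible. A secondary point is confirming that $V \odot \Sigma = V' \odot \Sigma$ where $V'$ is the appropriate conditional second-moment object under general MCAR — but under \Cref{ass:MCAR} the independence of $P$ and $X$ makes $\esp[X_{\imp} X_{\imp}^\top] = \esp[(P\odot X)(P\odot X)^\top] = \esp[PP^\top] \odot \Sigma$ and $\esp[X_{\imp}]$-type cross terms factor cleanly, so the decomposition of \Cref{prop:regularization_implicite} applies verbatim with $V = \mathrm{Cov}(P)$ and $H = \mathrm{diag}(\rho_j)$ regardless of dependence among the $P_j$. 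Everything else is the routine ridge-bias identity \eqref{eq:def_Bridge}.
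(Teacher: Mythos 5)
Your proposal is correct and follows essentially the same route as the paper: the decomposition of \Cref{prop:regularization_implicite}, the change of variables through $H$ yielding $H^{-1}(V\odot\Sigma)H^{-1}=C\odot\Sigma$, and the bound $C\odot\Sigma\preceq\lambda_{\max}(C)\,\mathrm{diag}(\Sigma)\preceq \Lambda_{\imp}I$, which is exactly the paper's \Cref{lem:hadamard_norm} combined with \Cref{ass:2moment_sup}. One small caveat: in your parenthetical justification the statements involving $\max_j B_{jj}$ (e.g.\ that $\max_j B_{jj}\,I-B\succeq 0$, or $A\odot B\preceq \max_j B_{jj}\cdot A$) are false in general; the correct constant is $\lambda_{\max}(B)$, i.e.\ one applies the Schur product theorem to $A\odot(\lambda_{\max}(B)I-B)\succeq 0$, which is what your main displayed chain actually uses.
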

The bound on the bias is similar to the one of  \cref{prop:biasBernoulliHomo} but appeals to $\lambda = \Lambda_{\imp}$ which takes into account the correlations between the components of  missing patterns. Remark that, under \cref{ass:bernoulli_model}, there are no correlation and $\Lambda_{\imp}=L^2\frac{1-\rho}{\rho}$, thus matching the result in \cref{prop:biasBernoulliHomo}. The following examples highlight generic scenarios in which an explicit control on $\Lambda_{\imp}$ is obtained.
\begin{example}[Limited number of correlations]\label{ex:LimitedCorr}
If each missing pattern component is correlated with  at most $k-1$ other components then $\Lambda_{\imp}\leq L^2 k\max_{j\in[d]}\left\{\frac{1-\rho_j}{\rho_j}\right\} $.  
\end{example}

\begin{example}[Sampling without replacement]\label{ex:samplingWithoutRempl}
Missing pattern components are sampled as $k$ components without replacement in $[d]$, then $\Lambda_{\imp}=L^2 \frac{k+1}{d-k}$. In particular, if one half of data is missing ($k=\frac{d}{2}$) then $\Lambda_{\imp}%=L^2\left(1+\frac{2}{d}\right)
\leq 3L^2$. 
\end{example}

In conclusion, we proved that the  imputation bias is controlled by the ridge bias, with a penalization constant $\Lambda_{\imp}$, under any MCAR settings. More precisely, all examples of the previous section (\Cref{ex:low_rank,ex:low_rank2,ex:noised_low_rank}), relying on a specific structure of the covariance matrix $\Sigma$ and the best predictor $\theta^{\star}$, are still valid, replacing $\lambda_{\imp}$
 by $\Lambda_{\imp}$. Additionally, specifying the missing data generation (as in \Cref{ex:samplingWithoutRempl,ex:LimitedCorr}) allows us to control the imputation bias, which is then proved to be small in high dimension, for all the above examples. 
 %As the bias is only one part of the problem, we study, in Section~\ref{sec:sgd}, an estimation strategy applied on imputed data and derive corresponding generalization bounds. 
 %based  predictor, based on a SGD strategy, fitted on imputed data and prove generalization bounds taking into account the estimation, the bias and the optimization error.  
%\es{Réfléchir à comment finir cette section}

\section{SGD on zero-imputed data}\label{sec:sgd}

Since the imputation bias is only a part of the story, we need to propose a proper estimation strategy for
%the best linear predictor with imputed data 
$\theta^{\star}_{\imp}$. 
To this aim, we choose to train a linear predictor on imputed samples, using an averaged stochastic gradient algorithm \citep{polyak1992acceleration}, described below. We then establish  generalization bounds on the excess risk of this estimation strategy. 
%In this section, we aim at establishing generalization bounds for a particular linear regressor trained on inputs with missing entries, for which . Indeed, as the dimension increases, the curse of dimensionality makes the estimation task tougher.  As an estimation strategy, we choose 

\subsection{Algorithm}
\label{subsec:algo}

%Now that we established that zero-imputation bias can be controlled, we need to design an estimation strategy for 
%In order to estimate $\theta^{\star}_{\imp}$, 

%, with the classical square loss function, applied on zero-imputed data. 
%\replace{}{In this section, we aim at establishing generalization bounds for a particular linear regressor trained on inputs with missing entries, for which the imputation bias is only a part of the story. Indeed, as the dimension increases, the curse of dimensionality makes the estimation task tougher. 
%As an estimation strategy, we choose to use an averaged stochastic gradient algorithm \citep{polyak1992acceleration} which is suitable for high-dimensional settings, and also more compatible with sample heteroskedasticity or model miss-specification -induced here by missing entries- than ordinary least squares.} %\cb{to complete}\al{je pense que là c'est ok}
Given an initialization $\theta_0\in\R^d$ and a constant learning rate $\gamma >0$, the iterates of the averaged SGD algorithm are given at iteration $t$ by 
%estimated predictor by a stochastic gradient descent with early stopping (one pass over the data) and 
%with initialization $\theta_0=0$. Then the iteration is 
\begin{equation}\label{eq:SGDiteration}
 \theta_{\imp,t}=\left[I-\gamma X_{\imp,t}X_{\imp,t}^{\top}\right]\theta_{\imp,t-1}+\gamma Y_{t}X_{\imp,t},   
\end{equation} 
so that after one pass over the data (early stopping), the final estimator $\bar\theta_{\imp,n}$  is given by the
Polyak-Ruppert average $
\bar\theta_{\imp,n}=\frac{1}{n+1}\sum_{t=1}^n \theta_{\imp,t}$.
Such recursive procedures are suitable for high-dimensional settings, and indicated for model miss-specification (induced here by missing entries), as studied in \citet{bach2013non}. Besides, they are very competitive for large-scale datasets, since one pass over the data 
requires $O(dn)$ operations. 

\subsection{Generalization bound}

Our aim is to derive a generalization bound on the predictive performance of the above algorithm, trained on zero-imputed data. To do this, we require the following extra assumptions on the complete data.
\begin{assumption}\label{ass:SGD}
There exist $\sigma>0$ and $\kappa>0$ such that $\esp[XX^{\top}\left\Vert X\right\Vert _{2}^{2}]\preceq \kappa \mathrm{Tr}(\Sigma)\Sigma$ and $\esp [\epsilon^{2}\left\Vert X\right\Vert _{2}^{2}]\leq\sigma^{2}\kappa\mathrm{Tr}(\Sigma)$, where $\epsilon= Y-X^\top\theta^\star$. 
\end{assumption}
 \Cref{ass:SGD} is a classical fourth-moment assumption in stochastic optimization  \citep[see][for details]{bach2013non,dieuleveut2017harder}. Indeed, the first statement in \Cref{ass:SGD} holds, for example, if $X$ is a Gaussian vector (with $\kappa=3$) or when $X$ satisfies $\left\Vert X\right\Vert _{2}\leq \kappa \mathrm{Tr}(\Sigma) $ almost surely. The second statement in Assumption~\ref{ass:SGD} holds, for example, if the model is well specified or when the noise $\varepsilon$ is almost surely bounded. Note that if the first part holds then the second part holds with $\sigma^2\leq 2\esp[Y^2]+2\esp[Y^4]^{1/2}$. 

Our main result, establishing an upper bound on the risk of SGD applied to zero-imputed data, follows. 
\begin{theorem}\label{thm:SGD_bound}
Under \Cref{ass:SGD}, choosing a constant learning rate $\gamma = \frac{1}{\kappa\mathrm{Tr}(\Sigma)\sqrt{n}}$ leads to 
\begin{align*}
    \esp\left[R_{\imp}\left(\bar{\theta}_{\imp,n}\right)\right]-R^{\star}(\F)  \lesssim & ~ \frac{\kappa\mathrm{Tr}(\Sigma)}{\sqrt{n}}\left\Vert \theta^{\star}_{\imp}-\theta_0\right\Vert _{2}^{2} +\frac{\sigma^2+\left\Vert \theta^{\star}\right\Vert _{\Sigma}^{2}}{\sqrt{n}} + B_{\imp}(\F),
\end{align*}
where $\theta^\star$ (resp. $\theta^{\star}_{\imp}$) is the best linear predictor for complete (resp. with imputed missing values) case. 
\end{theorem}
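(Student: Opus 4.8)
The plan is to decompose the excess risk $\esp[R_{\imp}(\bar\theta_{\imp,n})] - R^\star(\mathcal F)$ into an optimization/estimation part, measured relative to the best imputed predictor $\theta^\star_{\imp}$, and an approximation part, which is exactly the imputation bias $B_{\imp}(\mathcal F)$. Concretely, write
\[
\esp[R_{\imp}(\bar\theta_{\imp,n})] - R^\star(\mathcal F) = \Bigl(\esp[R_{\imp}(\bar\theta_{\imp,n})] - R_{\imp}^\star(\mathcal F)\Bigr) + \bigl(R_{\imp}^\star(\mathcal F) - R^\star(\mathcal F)\bigr),
\]
where the second bracket is $B_{\imp}(\mathcal F)$ by definition \eqref{eq:bias0}. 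So the whole task reduces to bounding the excess \emph{imputed} risk $\esp[R_{\imp}(\bar\theta_{\imp,n})] - R_{\imp}^\star(\mathcal F)$ of averaged SGD, viewed purely as a least-squares problem on the distribution of $(X_{\imp},Y)$.

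The key observation is that the SGD recursion \eqref{eq:SGDiteration} is exactly constant-step averaged least-mean-squares on the i.i.d.\ sequence $(X_{\imp,t},Y_t)$, so I can invoke the analysis of \citet{bach2013non} (or \citet{dieuleveut2017harder}) off the shelf, provided I verify their hypotheses for the imputed covariates. This means I need: (i) a bound on the imputed second-moment operator $\Sigma_{\imp} := \esp[X_{\imp}X_{\imp}^\top]$; (ii) a fourth-moment bound $\esp[X_{\imp}X_{\imp}^\top \|X_{\imp}\|_2^2] \preceq \kappa' \mathrm{Tr}(\Sigma_{\imp})\Sigma_{\imp}$; and (iii) a noise bound $\esp[\varepsilon_{\imp}^2\|X_{\imp}\|_2^2] \le \sigma_{\imp}^2\kappa'\mathrm{Tr}(\Sigma_{\imp})$ where $\varepsilon_{\imp} = Y - X_{\imp}^\top\theta^\star_{\imp}$. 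Since $X_{\imp} = P\odot X$ with $P\in\{0,1\}^d$ independent of $(X,Y)$, masking is a coordinate projection that only shrinks norms: $\|X_{\imp}\|_2 \le \|X\|_2$ pointwise, and conditioning on $P$ then averaging lets me transfer each of \Cref{ass:SGD}'s bounds to the imputed problem, at the cost of controlling $\mathrm{Tr}(\Sigma_{\imp}) = \sum_j \rho_j \esp[X_j^2] \le \mathrm{Tr}(\Sigma)$ and replacing $\Sigma$ by $\Sigma_{\imp}$, which by \Cref{prop:regularization_implicite} equals $H\Sigma H + V\odot\Sigma \succeq$ a controlled perturbation. For the noise term I would use $\varepsilon_{\imp}^2 \lesssim \varepsilon^2 + (X^\top\theta^\star - X_{\imp}^\top\theta^\star_{\imp})^2$ and $\esp[(X^\top\theta^\star - X_{\imp}^\top\theta^\star_{\imp})^2] = R_{\imp}^\star(\mathcal F) - \sigma^2 \lesssim \|\theta^\star\|_\Sigma^2 + \sigma^2$, which is where the $\sigma^2 + \|\theta^\star\|_\Sigma^2$ term in the bound originates. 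Plugging the learning rate $\gamma = 1/(\kappa\mathrm{Tr}(\Sigma)\sqrt n)$ into the Bach--Moulines bound, whose leading terms are $\frac{1}{\gamma n}\|\theta^\star_{\imp}-\theta_0\|_2^2$ and $\frac{\gamma n}{n}\sigma_{\imp}^2 \mathrm{Tr}(\Sigma_{\imp})$-type contributions, produces precisely $\frac{\kappa\mathrm{Tr}(\Sigma)}{\sqrt n}\|\theta^\star_{\imp}-\theta_0\|_2^2 + \frac{\sigma^2+\|\theta^\star\|_\Sigma^2}{\sqrt n}$ after simplification.

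I expect the main obstacle to be verifying the fourth-moment condition (ii) for $X_{\imp}$ with a \emph{dimension-free} constant $\kappa'$ comparable to $\kappa$, so that the chosen step size (calibrated with $\mathrm{Tr}(\Sigma)$, not $\mathrm{Tr}(\Sigma_{\imp})$) is still admissible and the constants do not degrade. The subtlety is that $\esp[X_{\imp}X_{\imp}^\top\|X_{\imp}\|_2^2]$ involves fourth-order interactions between the mask entries (so the independent-homogeneous structure of \Cref{ass:bernoulli_model}, or at least the MCAR independence $P\perp X$, must be used carefully when expanding $\esp_P$), and one must check that $\mathrm{Tr}(\Sigma_{\imp})\Sigma_{\imp}$ on the right-hand side dominates after taking the mask expectation — essentially that masking cannot create heavier tails than the original $X$. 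A secondary technical point is that \citet{bach2013non} is stated for the excess risk toward the \emph{minimizer} of the imputed risk, which is $\theta^\star_{\imp}$ and may not be unique if $\Sigma_{\imp}$ is singular; I would handle this exactly as in that reference by working with the minimum-norm minimizer and noting the averaged iterate's risk is controlled independently of the chosen minimizer. Once these checks are in place the result follows by assembling the two displayed pieces and taking expectations.
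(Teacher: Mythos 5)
Your proposal follows essentially the same route as the paper: the same decomposition into the averaged-SGD excess risk on the imputed problem plus $B_{\imp}(\F)$, followed by the Bach--Moulines constant-step bound applied to the stream $(X_{\imp,t},Y_t)$ after transferring \Cref{ass:SGD} to the imputed covariates via $\|X_{\imp}\|_2\le\|X\|_2$ and conditioning on $P$. The fourth-moment transfer you flag as the main obstacle is resolved exactly as you anticipate: writing $X_{\imp}X_{\imp}^\top = PP^\top\odot XX^\top$ and using that the Hadamard product with the positive semi-definite matrix $\esp[PP^\top]$ preserves the Loewner order gives $\esp[X_{\imp}X_{\imp}^\top\|X_{\imp}\|_2^2]\preceq \kappa\,\mathrm{Tr}(\Sigma)\,\Sigma_{\imp}$ with the \emph{same} constant $\kappa\,\mathrm{Tr}(\Sigma)$, so the step size calibrated with $\mathrm{Tr}(\Sigma)$ remains admissible (and the same trick, applied to the directions $\theta^\star$ and $\theta^\star_{\imp}$, upgrades your unweighted noise estimate to the required bound on $\esp[\varepsilon_{\imp}^2\|X_{\imp}\|_2^2]$).
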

Theorem~\ref{thm:SGD_bound} gives an upper bound on the difference between the averaged risk  $\esp[R_{\imp}\left(\bar{\theta}_{\imp,n}\right)]$ of the estimated linear predictor with imputed missing values (in both train and test samples) and $R^{\star}(\F)$, the risk of the best linear predictor on the complete case.  
Interestingly, by Lemma~\ref{lem:RmisVsR} and under a well-specified linear model, the latter also holds for $\esp\left[R_{\imp}\left(\bar{\theta}_{\imp,n}\right)\right]-R_{\mathrm{mis}}^{\star}$.
%\cb{ajouter une petite phrase sur l'excès de risque que l'on regarde} 
The generalization bound in Theorem~\ref{thm:SGD_bound} takes into account the statistical error of the method as well as the optimization error. More precisely, the upper bound can be decomposed into 
$(i)$ a bias associated to the initial condition, 
$(ii)$ a variance term of the considered method, and
$(iii)$ the aforementioned imputation bias.

%three parts, roughly speaking, the first term can be associated to the bias of the method and the second to the variance of the method; the third part is, as we have seen, the imputation bias. 

The variance term $(ii)$ depends on the second moment of $Y$ (as $\left\Vert \theta^{\star}\right\Vert _{\Sigma}^{2}\leq \esp Y^2$) and decreases with a slow rate $1/\sqrt{n}$. As seen in Section~\ref{sec:bias}, the imputation bias is upper-bounded by the ridge bias  with penalization parameter $\lambda_{\imp}$, which is controlled in high dimension  for low-rank data (see examples in Section~\ref{sec:bias_and_Sigma}).  

%{\color{gray}In the last section, we have show that imputation bias can be small for high dimension. To leverage, in practice, this low bias, we need to estimate the associated best predictor $\theta_{\imp}^\star$. However, without information on this optimal predictor, the curse of dimensionality makes the estimation impossible for large $d$. Hopefully, 
%the following lemma give us a precious insight on $\theta_{\imp}^\star$ to estimate it in high dimension. 
The bias $(i)$ due to the initial condition is the most critical. Indeed,  $\mathrm{Tr}(\Sigma)= \esp[\| X \|_2^2]$  is likely to increase with $d$, e.g., under Assumption~\ref{ass:2moment_sup}, $\mathrm{Tr}(\Sigma)\leq d L^2$. Besides, the starting point  $\theta_0$ may be far from $\theta_{\imp}^\star$. Fortunately, Lemma~\ref{lem:norm_control} establishes some properties of  $\theta_{\imp}^\star$. 

\begin{lemma}\label{lem:norm_control}
    Under Assumptions~\ref{ass:MCAR} and \ref{ass:2moment_inf}, let $V$ be the covariance matrix of $P$ defined in \cref{prop:regularization_implicite}. If $V$ is invertible, then 
    \begin{equation}
            \left\Vert \theta_{\imp}^\star\right\Vert _{2}^{2}\leq \frac{B_{\imp}(\F)}{\ell^2\lambda_{\min}(V)}. 
        \end{equation}
In particular, under \cref{ass:bernoulli_model},
\begin{equation}
            \left\Vert \theta_{\imp}^\star\right\Vert _{2}^{2}\leq \frac{B_{\imp}(\F)}{\ell^2\rho(1-\rho)}. 
        \end{equation}
\end{lemma}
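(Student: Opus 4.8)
The plan is to read off the claimed norm bound directly from the decomposition in \Cref{prop:regularization_implicite}, combined with an elementary spectral lower bound on the Hadamard product $V\odot\Sigma$. Recall from \Cref{prop:regularization_implicite} that, under \Cref{ass:MCAR},
\[
R_{\imp}(\theta)=R(H\theta)+\left\Vert\theta\right\Vert_{V\odot\Sigma}^{2}\qquad\text{for all }\theta\in\R^d,
\]
with $H=\mathrm{diag}(\rho_1,\dots,\rho_d)$. Since $H\theta$ is just a particular linear predictor, $R(H\theta)\geq R^\star(\F)$ for every $\theta$. Evaluating the identity above at $\theta=\theta_{\imp}^\star$ and rearranging, I get
\[
\left\Vert\theta_{\imp}^\star\right\Vert_{V\odot\Sigma}^{2}=R_{\imp}^\star(\F)-R(H\theta_{\imp}^\star)\leq R_{\imp}^\star(\F)-R^\star(\F)=B_{\imp}(\F).
\]

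The second and main step is to lower bound $\left\Vert\theta\right\Vert_{V\odot\Sigma}^{2}=\theta^\top(V\odot\Sigma)\theta$ by a multiple of $\left\Vert\theta\right\Vert_2^2$. Here I would invoke the Schur product theorem: write $V=\lambda_{\min}(V)I+\big(V-\lambda_{\min}(V)I\big)$, where the remainder is positive semidefinite because $V$ is a covariance matrix. Taking the Hadamard product with $\Sigma\succeq 0$ and using that the Hadamard product of two PSD matrices is PSD,
\[
V\odot\Sigma=\lambda_{\min}(V)\,\mathrm{diag}(\Sigma)+\big(V-\lambda_{\min}(V)I\big)\odot\Sigma\;\succeq\;\lambda_{\min}(V)\,\mathrm{diag}(\Sigma),
\]
since $I\odot\Sigma=\mathrm{diag}(\Sigma)=\mathrm{diag}(\esp[X_1^2],\dots,\esp[X_d^2])$. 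By \Cref{ass:2moment_inf}, $\mathrm{diag}(\Sigma)\succeq\ell^2 I$, hence $V\odot\Sigma\succeq\ell^2\lambda_{\min}(V)I$ and therefore $\left\Vert\theta\right\Vert_{V\odot\Sigma}^{2}\geq\ell^2\lambda_{\min}(V)\left\Vert\theta\right\Vert_2^2$ for all $\theta$. Combining this with the first step yields
\[
\ell^2\lambda_{\min}(V)\left\Vert\theta_{\imp}^\star\right\Vert_2^2\leq\left\Vert\theta_{\imp}^\star\right\Vert_{V\odot\Sigma}^{2}\leq B_{\imp}(\F),
\]
which is exactly the first inequality (note $\lambda_{\min}(V)>0$ since $V$ is assumed invertible and PSD). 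For the second claim, under \Cref{ass:bernoulli_model} the components of $P$ are i.i.d.\ $\mathcal{B}(\rho)$, so $V=\rho(1-\rho)I$ and $\lambda_{\min}(V)=\rho(1-\rho)$, giving the stated specialization.

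I expect the only real subtlety to be the matrix inequality $V\odot\Sigma\succeq\lambda_{\min}(V)\,\mathrm{diag}(\Sigma)$: it rests on the Schur product theorem and on the fact that $I\odot\Sigma$ extracts the diagonal, both standard but worth stating cleanly. Everything else is bookkeeping: the reduction via \Cref{prop:regularization_implicite} and the trivial bound $R(H\theta_{\imp}^\star)\geq R^\star(\F)$.
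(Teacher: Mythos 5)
Your proof is correct and follows essentially the same route as the paper's: bound $\left\Vert\theta_{\imp}^\star\right\Vert_{V\odot\Sigma}^{2}$ by $B_{\imp}(\F)$ via the decomposition of $R_{\imp}$ and the fact that $R(H\theta_{\imp}^\star)\geq R^\star(\F)$, then lower-bound $V\odot\Sigma\succeq\lambda_{\min}(V)\,\mathrm{diag}(\Sigma)\succeq\ell^2\lambda_{\min}(V)I$. Your appeal to the Schur product theorem is exactly the content of the paper's Hadamard-monotonicity lemma (\cref{lem:hadamard_monotonicity}), which it proves via a Gaussian representation, so the two arguments coincide in substance.
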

Lemma~\ref{lem:norm_control} controls the norm of the optimal predictor $\theta_{\imp}^\star$  by the imputation bias: if  the imputation bias is small, then the optimal predictor on zero-imputed data is of low norm. According to Section~\ref{sec:bias}, this holds in particular for high-dimensional settings.  
Thus, choosing $\theta_0=0$ permits us  to exploit the upper bound provided by Lemma \ref{lem:norm_control} in Theorem \ref{thm:SGD_bound}. With such an initialization, the bias due to this initial condition is upper bounded by $\frac{\kappa \mathrm{Tr}(\Sigma)}{\sqrt{n}}\| \theta^{\star}_{\imp}\|_{2}^{2}$. 
Intuitively, as $\theta_{\imp}^\star$ is in an $\ell^2$-ball of small radius, choosing $\theta_0$ within that ball, e.g. $\theta_0=0$ is a good choice. 
%choosing $\theta_0=0$ and a small learning rate leads to an estimator  $\bar\theta_{\imp,n}$ that remains close to $0$, just like the optimal predictor $\theta_{\imp}^\star$ for high-dimensional settings. 

%Thus, using \cref{lem:norm_control} in Theorem \ref{thm:SGD_bound}, we obtain an upper bound for which we can compare the influence of the different terms. For this upper bound, if 
%shows that if $\frac{R^2}{\lambda_{\min}(V)\sqrt{n}}<1$ then the imputation bias is larger than the bias due to the initial condition. On the contrary, when 
%$\ell^2\lambda_{\min}(V)\sqrt{n} \ll  \kappa\mathrm{Tr}(\Sigma)$, the imputation bias is negligible compared to the term related to the initial condition.   
%However, as we will see in \cref{prop:SGD_bound_bernoulli} and examples, in some settings the above bound is good even for large $d$ and sometimes it can be dimension free.  

%Note also, that the above theorem concerns an optimization algorithm. Thus, the bound includes the optimization error as well. 

%The guarantee found is after one pass over the data (streaming online)\al{@aymeric je dis pas de bétise?} i.e. the time complexity is in $O(dn)$ which makes the method very competitive for large datasets. This is particularly relevant in the framework of this paper where $d$ is very large.
   
Taking into account \Cref{lem:norm_control}, \Cref{prop:SGD_bound_bernoulli}  establishes our final upper bound on SGD on zero-imputed data.

%\begin{itemize}
%    \item Trois erreur: 3 terme bias de la methode, var de la methode, bias de l'imputation. On compare avec le modele complet
%    \item Discussion du tradeoff: à quel moment l'erreur sto est forcement plus petite que le bias et inversement? 
%    \item Vertus des SGD: on inclu l'erreur d'optimisation dans la borne, on a besoin de lire qu'une seul fois les donnée (donc tres adapté au large dataset) et compléxité spatiale faible.   
%\end{itemize}

\begin{propo}\label{prop:SGD_bound_bernoulli}
Under Assumptions~\ref{ass:bernoulli_model}, \ref{ass:2moment_sup}, \ref{ass:2moment_inf} and \ref{ass:SGD}, the predictor $\bar{\theta}_{\imp,n}$ resulting from the SGD strategy, defined in Section~\ref{subsec:algo}, with starting point $\theta_0=0$ and learning rate  $\gamma = \frac{1}{d\kappa L^2\sqrt{n}}$, satisfies 
\begin{align*}
   \esp\left[R_{\imp}\left(\bar{\theta}_{\imp,n}\right)\right]- R^{\star}(\F) 
  \lesssim &\left(\frac{L^2}{\ell^2}\frac{\kappa d}{\rho(1-\rho)\sqrt{n}}+1\right)B_{\imp}(\F) + \frac{\sigma^2+\left\Vert \theta^{\star}\right\Vert _{\Sigma}^{2}}{\sqrt{n}}  .  
\end{align*}
\end{propo}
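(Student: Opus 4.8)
The plan is to derive \Cref{prop:SGD_bound_bernoulli} by specializing \Cref{thm:SGD_bound} to the Ho-MCAR setting and then absorbing the initialization term using the norm control of \Cref{lem:norm_control}. First I would instantiate \Cref{thm:SGD_bound} with the stated learning rate: under \cref{ass:bernoulli_model}, \ref{ass:2moment_sup}, the covariance of the zero-imputed input is not $\Sigma$ but, from \Cref{prop:regularization_implicite}, governed by $\rho^2\Sigma + \rho(1-\rho)L^2 I$ in the Ho-MCAR case; however, what \Cref{thm:SGD_bound} actually requires is the fourth-moment condition (\Cref{ass:SGD}) for the \emph{imputed} data. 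I would first check (or cite the relevant appendix lemma) that \Cref{ass:SGD} on the complete $X$ transfers to $X_{\imp}=P\odot X$ with the same $\kappa$ up to constants, and that $\mathrm{Tr}$ of the imputed covariance is at most $\rho\,\mathrm{Tr}(\Sigma)\le \rho d L^2 \le dL^2$, so that the choice $\gamma=\frac{1}{d\kappa L^2\sqrt n}$ is a valid (possibly conservative) instance of the $\gamma=\frac{1}{\kappa\mathrm{Tr}(\Sigma_{\imp})\sqrt n}$ prescription of \Cref{thm:SGD_bound}. With $\theta_0=0$, the conclusion of \Cref{thm:SGD_bound} becomes
\[
\esp\left[R_{\imp}\left(\bar\theta_{\imp,n}\right)\right]-R^\star(\F)\lesssim \frac{\kappa d L^2}{\sqrt n}\left\Vert\theta^\star_{\imp}\right\Vert_2^2 + \frac{\sigma^2+\left\Vert\theta^\star\right\Vert_\Sigma^2}{\sqrt n} + B_{\imp}(\F).
\]

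The second step is to bound the initialization term $\frac{\kappa d L^2}{\sqrt n}\|\theta^\star_{\imp}\|_2^2$. Under \cref{ass:bernoulli_model} and \ref{ass:2moment_inf}, \Cref{lem:norm_control} gives $\|\theta^\star_{\imp}\|_2^2\le \frac{B_{\imp}(\F)}{\ell^2\rho(1-\rho)}$, since here $V=\rho(1-\rho)I$ so $\lambda_{\min}(V)=\rho(1-\rho)$. Substituting yields
\[
\frac{\kappa d L^2}{\sqrt n}\left\Vert\theta^\star_{\imp}\right\Vert_2^2 \le \frac{L^2}{\ell^2}\,\frac{\kappa d}{\rho(1-\rho)\sqrt n}\,B_{\imp}(\F),
\]
and combining this with the residual $B_{\imp}(\F)$ term already present in the bound gives the factor $\left(\frac{L^2}{\ell^2}\frac{\kappa d}{\rho(1-\rho)\sqrt n}+1\right)B_{\imp}(\F)$; the variance term $\frac{\sigma^2+\|\theta^\star\|_\Sigma^2}{\sqrt n}$ is carried through unchanged. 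That is exactly the claimed inequality, so the proof is essentially a two-line chaining once the ingredients are in place.

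The main obstacle I anticipate is the bookkeeping around which covariance and which trace enter \Cref{thm:SGD_bound} when it is applied to imputed data: \Cref{thm:SGD_bound} is stated abstractly ``under \Cref{ass:SGD}'' for a generic linear-regression-with-SGD setup, and here the design is $X_{\imp}$, whose covariance is $H\Sigma H + V\odot\Sigma$ (cf.\ \Cref{prop:regularization_implicite}), whose Bayes linear predictor is $\theta^\star_{\imp}$, and for which the residual is the imputation residual rather than $\epsilon$. One must verify that the moment constants $\kappa,\sigma$ for the imputed problem can be taken to be (up to universal constants) those of the complete problem — this is where a short lemma transferring \Cref{ass:SGD} from $X$ to $P\odot X$ under MCAR is needed, using $\|P\odot X\|_2\le\|X\|_2$ and independence of $P$ from $(X,Y)$ — and that replacing $\mathrm{Tr}(\Sigma_{\imp})$ by the larger $dL^2$ only loosens the learning rate in a harmless direction. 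A secondary, minor point is confirming that the imputation-bias term produced by \Cref{thm:SGD_bound} (which measures distance to $R^\star(\F)$, not to $R^\star_{\imp}(\F)$) is precisely $B_{\imp}(\F)$ as defined in \eqref{eq:bias0}, so that no additional slack is introduced. Once these identifications are pinned down, the rest is the elementary substitution above.
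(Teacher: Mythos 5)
Your proposal is correct and follows essentially the same route as the paper: instantiate \Cref{thm:SGD_bound} with $\theta_0=0$, bound $\mathrm{Tr}(\Sigma)\leq dL^2$ via \Cref{ass:2moment_sup}, and absorb the initialization term using $\left\Vert\theta^\star_{\imp}\right\Vert_2^2\leq B_{\imp}(\F)/(\ell^2\rho(1-\rho))$ from \Cref{lem:norm_control}. The transfer of \Cref{ass:SGD} to the imputed design that you flag as a potential obstacle is already handled inside the paper's proof of \Cref{thm:SGD_bound} (which is stated for SGD on imputed data with moment constants of the complete data), so no extra lemma is needed.
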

In this upper bound, the first term encapsulates the imputation bias and the one due to the initial condition, whilst the second one corresponds to the variance of the training procedure. 
%The dimension trade-off between imputation bias and estimation is a little more precised, if 
As soon as $d \gg \frac{\ell^2}{L^2}\frac{\rho(1-\rho)\sqrt{n}}{\kappa}$ then the imputation bias is negligible compared to that of the initial condition. 
%, and if $d\leq\frac{\rho(1-\rho)\sqrt{n}}{L^2}$ then it is the imputation that is limiting. 
%{\color{red}To summarize this result, a good dataset sizes order for imputation are $d$ larger than $\sqrt{n}$, that is to say that imputation bias seems to be negligible in comparison with stochastic error for this regime.\al{à mettre dans les take at home du papier} 

%Besides, if an increasing dimension (leading to large $R^2$) may be favorable to reducing the imputation bias, it is often detrimental to the stochastic error term. In other words, a trade-off has to be found:  the dimension should be large enough to limit the effect of missing data, while remaining reasonable not to damage the estimation procedure.

\subsection{Examples}

According to \cref{ex:low_rank,ex:noised_low_rank}, $B_{\imp}(\F)$ decreases with the dimension, provided that $\Sigma$ or $\beta$ are structured. Strikingly, Corollary~\ref{cor:SGD_upp_bound} highlights cases where the upper bound of \cref{prop:SGD_bound_bernoulli} is actually dimension-free.

 %In the following, we instantiate the obtained generalization bound in different examples, underlining the relevancy of the approach, possibly regardless of the regimes for $d$ (see \cref{prop:SGD_bound_bernoulli}).
 
\begin{corollary}
\label{cor:SGD_upp_bound}
Suppose that assumptions of \cref{prop:SGD_bound_bernoulli} hold. Recall that $\lambda_1 \geq \hdots \geq \lambda_d$ are the eigenvalues of $\Sigma$ associated with the eigenvectors $v_1, \hdots, v_d$. 
\begin{enumerate}[leftmargin=*]
    \item[$(i)$]  \emph{(\cref{ex:low_rank} - Low-rank $\Sigma$).} If $\Sigma$ has a low rank $r\ll d$ and equal non-zero singular values, then 
    \begin{equation*}
      \esp\left[R_{\imp}\left(\bar{\theta}_{\imp,n}\right)\right]-R^{\star}(\F)  \lesssim \frac{L^2}{\ell^2}\left(\frac{L^2}{\ell^2}\frac{\kappa}{\rho\sqrt{n}}+\frac{1-\rho}{d}\right)\frac{ r \left\Vert \theta^{\star}\right\Vert _{\Sigma}^{2}}{\rho} + \frac{\sigma^2}{\sqrt{n}}.  
    \end{equation*}
    %\item[$(ii)$]  (\cref{ex:low_rank2} - Low-rank $\Sigma$ compatible with $\theta^\star$) 
    %If $\Sigma$ has a low rank $r\ll d$ and $\langle \theta^\star,v_1\rangle^2\geq \dots\geq \langle \theta^\star,v_d\rangle^2$, then 
    %\begin{multline*}
     %\esp\left[R_{\imp}\left(\bar{\theta}_{\imp,n}\right)\right]-R^{\star}(\F) \\\lesssim \frac{L^2}{\ell^2}\left(\frac{L^2}{\ell^2}\frac{\kappa}{\rho\sqrt{n}}+\frac{1-\rho}{d}\right)\frac{ r(\log(r)+1) \left\Vert \theta^{\star}\right\Vert _{\Sigma}^{2}}{\rho}+ \frac{\sigma^2}{\sqrt{n}}.   
    %\end{multline*}
    \item[$(ii)$]  \emph{(\cref{ex:noised_low_rank} - Spiked model).}
    If $\Sigma= \Sigma_{\leq r}+ \Sigma_{> r} $ with  $\Sigma_{> r} \preceq \ell^2\eta I$, $\Sigma_{\leq r}$ has a low rank $r\ll d$ with equal non-zero singular values, and the projection of $\theta^{\star}$ on the range of $\Sigma_{> r}$ satisfies $\theta^{\star}_{>r}=0$, then 
    \begin{equation*}
     \esp\left[R_{\imp}\left(\bar{\theta}_{\imp,n}\right)\right]-R^{\star}(\F) \lesssim \frac{L^2}{\ell^2}\left(\frac{L^2}{\ell^2}\frac{\kappa}{\rho\sqrt{n}}+\frac{1-\rho}{d}\right)\frac{ r \left\Vert \theta^{\star}\right\Vert _{\Sigma}^{2}}{\rho (1-\eta)}+ \frac{\sigma^2}{\sqrt{n}}.   
    \end{equation*}
\end{enumerate}

\end{corollary}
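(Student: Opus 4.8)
The plan is to chain three ingredients already established: the generalization bound of \cref{thm:SGD_bound}, the norm control of \cref{lem:norm_control}, and the two imputation-bias estimates of \cref{ex:low_rank} and \cref{ex:noised_low_rank} (both stated under the normalization $\mathrm{Tr}(\Sigma)=d$, which I adopt here). First I would apply \cref{thm:SGD_bound} with $\theta_0=0$ and learning rate $\gamma=1/(\kappa d\sqrt n)$ (the instance of the theorem's rate corresponding to $\mathrm{Tr}(\Sigma)=d$), and then bound $\norm{\theta^\star_{\imp}}_2^2$ via \cref{lem:norm_control} under \cref{ass:bernoulli_model}. Merging the two resulting occurrences of $B_{\imp}(\F)$ gives
\[
\esp\left[R_{\imp}(\bar\theta_{\imp,n})\right]-R^\star(\F)\lesssim\left(\frac{\kappa d}{\ell^2\rho(1-\rho)\sqrt n}+1\right)B_{\imp}(\F)+\frac{\sigma^2+\norm{\theta^\star}_\Sigma^2}{\sqrt n},
\]
which is the common starting point for both items. (One could equally start from \cref{prop:SGD_bound_bernoulli} directly; the only cost is a spurious $L^2$ factor, harmless since $\ell^2\le1\le L^2$ when $\mathrm{Tr}(\Sigma)=d$.)

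For item $(i)$ I would then plug in the bound of \cref{ex:low_rank}, namely $B_{\imp}(\F)\le\lambda_{\imp}\,(r/d)\norm{\theta^\star}_\Sigma^2$ with $\lambda_{\imp}=L^2(1-\rho)/\rho$: the $(1-\rho)$ factors cancel in the cross term, leaving $\tfrac{L^2}{\ell^2}\tfrac{\kappa r}{\rho^2\sqrt n}\norm{\theta^\star}_\Sigma^2$, while the ``$\times 1$'' contribution is $L^2\tfrac{1-\rho}{\rho}\tfrac{r}{d}\norm{\theta^\star}_\Sigma^2$; after the elementary bounds $\tfrac{L^2}{\ell^2}\le\tfrac{L^4}{\ell^4}$ and $L^2\le\tfrac{L^2}{\ell^2}$ (both licit since $\ell^2\le1\le L^2$), this matches the claimed expression, apart from the $\norm{\theta^\star}_\Sigma^2/\sqrt n$ piece of the variance term, which I would absorb into the leading term using $\tfrac{L^4}{\ell^4}\tfrac{\kappa r}{\rho^2}\ge1$. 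For item $(ii)$ the computation is identical, now feeding in \cref{ex:noised_low_rank} with $\theta^\star_{>r}=0$ and $\Sigma_{>r}\preceq\ell^2\eta I\preceq\eta I$, so that a factor $1/(1-\eta)$ is carried through every term.

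The step I expect to require the most care is not any single inequality but the bookkeeping that makes the two conventions compatible: \cref{thm:SGD_bound} and \cref{prop:SGD_bound_bernoulli} are phrased through $\mathrm{Tr}(\Sigma)$ (and, in the proposition, the crude bound $\mathrm{Tr}(\Sigma)\le dL^2$), whereas the bias examples presuppose $\mathrm{Tr}(\Sigma)=d$, which in turn forces $\ell^2\le1\le L^2$ and legitimizes the constant manipulations above. The remaining facts are routine: $\kappa\ge1$ (take the trace in \cref{ass:SGD} and apply Jensen), $r\ge1$, $\rho\le1$, and $1/(1-\eta)\ge1$ — all used precisely to absorb the $O(1/\sqrt n)$ variance contribution into the dominant term. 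Everything else is a direct substitution.
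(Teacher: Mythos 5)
Your proposal is correct and follows essentially the same route as the paper: chain \cref{prop:SGD_bound_bernoulli} (equivalently \cref{thm:SGD_bound} plus \cref{lem:norm_control} with $\theta_0=0$) with the ridge-bias bounds of \cref{ex:low_rank} and \cref{ex:noised_low_rank}, then absorb the residual $\left\Vert\theta^{\star}\right\Vert_{\Sigma}^{2}/\sqrt{n}$ term using $\kappa\geq 1$, $r\geq 1$, $\rho\leq 1$. The only cosmetic difference is bookkeeping: the paper applies the appendix versions of the bias bounds (stated in terms of $\mathrm{Tr}(\Sigma)$) together with $\mathrm{Tr}(\Sigma)\geq d\ell^2$ from \cref{ass:2moment_inf}, whereas you invoke the normalization $\mathrm{Tr}(\Sigma)=d$ and compensate with $\ell^2\leq 1\leq L^2$; both yield the stated constants.
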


\Cref{cor:SGD_upp_bound} establishes upper bounds on the risk of SGD applied on zero-imputed data, for some particular structures on $\Sigma$ and $\theta^{\star}$. These bounds take into account the statistical error as well as the optimization one, and are expressed as function of $d$ and $n$.  
Since $\left\Vert \theta^{\star}\right\Vert _{\Sigma}^{2}$ is upper bounded by $\mathbb{E}Y^2$ (a dimension-free term), the risks in \Cref{cor:SGD_upp_bound} can also be upper bounded by dimension-free quantities, provided   $d>\frac{\ell^2}{L^2}\frac{\rho(1-\rho)\sqrt{n}}{\kappa}$.
%(and by a quantity involving $\log(d)$ for $(iii)$).  

Besides, \Cref{cor:SGD_upp_bound} shows that, for $d \gg \frac{\ell^2}{L^2}\frac{\rho(1-\rho)\sqrt{n}}{\kappa}$, the imputation bias is negligible with respect to the stochastic error of SGD. Therefore, for structured problems in high-dimensional settings for which $d \gg \frac{\ell^2}{L^2}\frac{\rho(1-\rho)\sqrt{n}}{\kappa}$, the zero-imputation strategy is consistent, with a slow rate of order $1/\sqrt{n}$.

\begin{remark}[Discussion about slow rates]
%{\color{gray}The choice of a stochastic gradient method is quite important because in a context of missing values the noise is heteroscedastic in nature. That is to say, the variance of the noise, apart from the trivial case where there is no correlation between the features, depends on $X$. The current theoretical results on stochastic optimization are much stronger, in this context, than the results on ERM (Empirical Risk Minimisation) with or without regularization.  }\al{à remonter}
An important limitation of coupling naive imputation with SGD is that fast convergence rates cannot be reached. Indeed, in large dimensions, the classical fast rate is given by $\mathrm{Tr}(\Sigma(\Sigma+\lambda I)^{-1})/n$ with $\lambda$ the penalization hyper-parameter.  The quantity $\mathrm{Tr}(\Sigma(\Sigma+\lambda I)^{-1})$, often called degrees of freedom, can be negligible w.r.t.\ $d$ (for instance when $\Sigma$ has a fast eigenvalue decay). However, when working with an imputed dataset, the covariance matrix of the data is not $\Sigma$ anymore, but $\Sigma_{\imp} =\esp X_{\imp}X_{\imp}^{\top}$.  
Therefore, in the case of \cref{ass:bernoulli_model} (Ho-MCAR), 
all the eigenvalues of $\Sigma_{\imp}$ are larger than $\rho(1-\rho)$ (preventing the eigenvalues decay obtained when working with complete inputs). By concavity of the degrees of freedom (on positive semi-definite matrix),  we can show that 
$\mathrm{Tr}(\Sigma_{\imp}(\Sigma_{\imp}+\lambda I)^{-1}) \geq \frac{d\rho(1-\rho)}{1+\lambda}$, 
%\cb{avec des $\Sigma$ imp et un lambda particulier ?}\al{$\Sigma_{\imp}$et lambda quelconque}
hindering traditional fast rates.    
\end{remark}

\paragraph{Link with dropout} Dropout is a classical regularization technique used in deep learning, consisting in randomly discarding some neurons at each SGD iteration~\citep{srivastava2014dropout}. Regularization properties of dropout have attracted a lot of attention \citep[e.g., ][]{gal2016theoretically}. Interestingly, setting a neuron to 0 on the input layer is equivalent to masking the corresponding feature.  
%The linear model for regression we consider corresponds to a simple single layer-single neuron network.
% While we only focus on a restrictive case linear models for regression,  the particular case in which
Running SGD (as in \Cref{sec:sgd}) on a stream of zero-imputed data is thus equivalent to training a neural network with no hidden layer, a single output neuron, and dropout on the input layer. Our theoretical analysis describes the implicit regularization impact of dropout in that very particular case. Interestingly, this can also be applied to the fine-tuning of the last layer of any regression network structure.

\section{Numerical experiments}\label{sec:experiments}

\paragraph{Data simulation}
We generate $n=500$ complete input data according to a normal distribution with two different covariance structures. First, in the \textbf{low-rank} setting (Ex.~\ref{ex:low_rank} and \ref{ex:low_rank2}), the output is formed as 
$Y=\beta^\top Z+\epsilon$, with $\beta\in\R^r$, $Z\sim\mathcal{N}(0,I_r)$ and $\epsilon\sim\mathcal{N}(0,2)$, and the inputs are given by $X=AZ+\mu$, with a full rank matrix $A\in \R^{d\times r}$ and a mean vector $\mu\in\R^d$. 
Note that the dimension $d$ varies in the experiments, while $r=5$ is kept fixed. Besides, the full model can be rewritten as  $Y=X^\top \theta^\star +\epsilon$ with $\theta^\star = (A^{\dagger})^\top \beta$ where $A^{\dagger}$ is the Moore-Penrose inverse of $A$. Secondly, in the \textbf{spiked model} (Ex.~\ref{ex:noised_low_rank}), the input and the output are decomposed as $X = (X_1,X_2)\in\mathbb{R}^{d/2}\times \mathbb{R}^{d/2}$ and $Y = Y_1+Y_2$, where  $(X_1, Y_1)$ is generated according to the low-rank model above and $(X_2, Y_2)$ is given by a linear model $Y_2= \theta_2^\top X_2$ and $X_2\sim \mathcal{N}(0,I_{d/2})$,  choosing $\Vert \theta_2\Vert=0.2$.

Two missing data scenarios, with a proportion $\rho$ of observed entries equal to $50\%$, are simulated according to (i) the {Ho-MCAR} setting (\Cref{ass:bernoulli_model}); 
and to 
(ii) the {self-masking MNAR} setting, which departs significantly from the MCAR case as the presence of missing data depends on the underlying value itself. More precisely, set $\alpha \in \R^d$ such that,  for all $j\in[d]$, 
$\P(P_j=1|X)=(1+e^{-\alpha_j X_j})^{-1}$ and $\esp [P_j] = 0.5$ ($50\%$ of missing data on average per components). 
% when the expectat, where $\alpha$ is chosen so that the probability of missingness equals $50\%$.

%we consider a low and high dimensional part. To do that, we define two model $(Y_1,X_1)$ and $(Y_2,X_2)$ each supported on $\R\times\R^{d/2}$. $(Y_1,X_1)$ is generated according to low rank model above.  $(Y_2,X_2)$ is a linear model $Y_2= \theta_2^\top X_2$ and $X_2\sim \mathcal{N}(0,I_{d/2})$, we choose $\Vert \theta_2\Vert=0.2$. We consider that $Y= Y_1+Y_2$ and $X$ is equal to $X_1$ on $d/2$ first components and $X_2$ after. 

\paragraph{Regressors}  For two-step strategies, different imputers are  combined with different regressors. The considered imputers are:  
the zero imputation method (\textbf{0-imp}) complying with the theoretical analysis developed in this paper, 
the optimal imputation by a constant for each input variable (\textbf{Opti-imp}), obtained by training a linear model on the augmented data $(P\odot X,P)$  \citep[see][Proposition 3.1]{le2020linear}, and single imputation by chained equations (\textbf{ICE}, \citep{van2011mice})\footnote{ \texttt{IterativeImputer} in scikit-learn  \citep{scikit-learn}.}.
The subsequent regressors, implemented in scikit-learn \citep{scikit-learn}, are either the averaged SGD (\textbf{SGD}, package \texttt{SGDRegressor}) with $\theta_0=0$ and $\gamma = (d\sqrt{n})^{-1}$ (see  \Cref{prop:SGD_bound_bernoulli}, or the ridge regressor 
(with a leave-one-out cross-validation, package \texttt{ridge}). 
%Both are provided by scikit-learn \citep{scikit-learn}, resp.\  and \texttt{Ridge}.
%we consider two baselines consisting in imputation followed by a SGD (with parameter of \cref{prop:SGD_bound_bernoulli}) or Ridge (with cross-validation on penalization parameter) regression on the imputed data: \textbf{0-imp+SGD} and \textbf{0-imp+Ridge} for imputation by 0; \textbf{Cst-imp+Ridge} learn  optimal imputation constants for each variable, see \citep[Proposition 3.1]{le2020linear}); \textbf{MICE+Ridge}, the imputation is performed by the scikit-learn \verb|IterativeImputer|
%which relies on {MICE} \citep{van2011mice}.
Two specific methods that do not resort to prior imputation are also assessed: a pattern-by-pattern regressor \citep{le2020linear,ayme2022near} (\textbf{Pat-by-Pat})
and a neural network architecture (\textbf{NeuMiss}) \citep{lemorvan:hal-02888867} specifically designed to handle missing data in linear prediction.

\paragraph{Numerical results} In Figure \ref{fig:excess_risk_XP} (a) and (b), we consider Ho-MCAR patterns with Gaussian inputs with resp.\ a low-rank and spiked covariance matrix.
The 2-step strategies perform remarkably well, with the ICE imputer on the top of the podium, highly appropriate to the type of data (MCAR Gaussian) in play. Nonetheless, the naive imputation by zero remains competitive in terms of predictive performance and is computationally efficient, with a complexity of $O(nd)$, especially compared to ICE, whose complexity is of order $n^2d^3$. 
Regarding Figure \ref{fig:excess_risk_XP} (b), we note that ridge regression outperforms SGD for large $d$. Note that, in the regime where $d \geq  \sqrt{n}$, the imputation bias is negligible w.r.t.\ to the method bias, the latter being lower in the case of ridge regression. 
%the imputation bias does not tend to zero (Ex.~\ref{ex:noised_low_rank}) whereas the ridge bias vanishes, provided that $n$ is large enough. 
This highlights the benefit of explicit ridge regularization (with a tuned hyperparameter) over the implicit regularization induced by the imputation.

\begin{figure}[h]
    \centering
    \includegraphics[width= 0.9\linewidth]{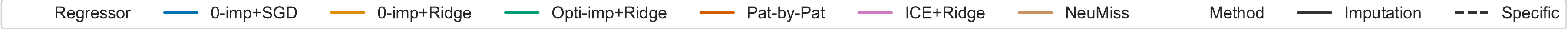}
    \begin{tabular}{ccc}
        \includegraphics[width=0.27\linewidth]{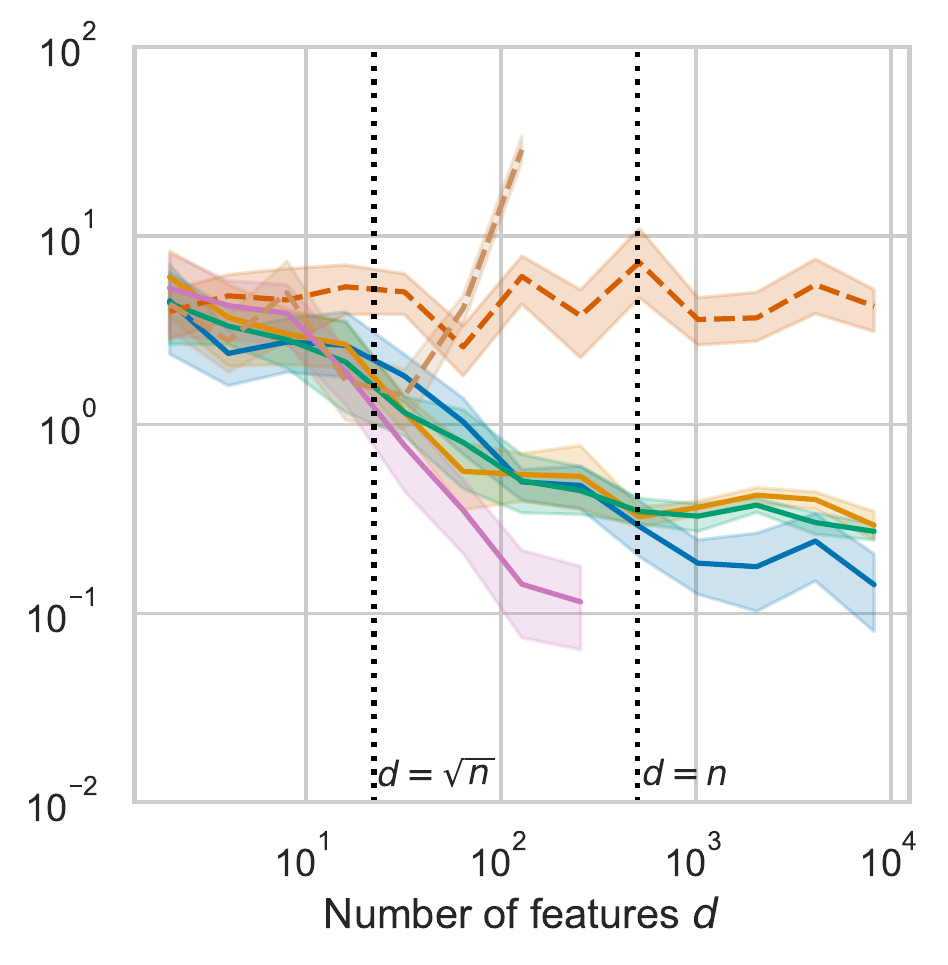}&
        \includegraphics[width=0.27\linewidth]{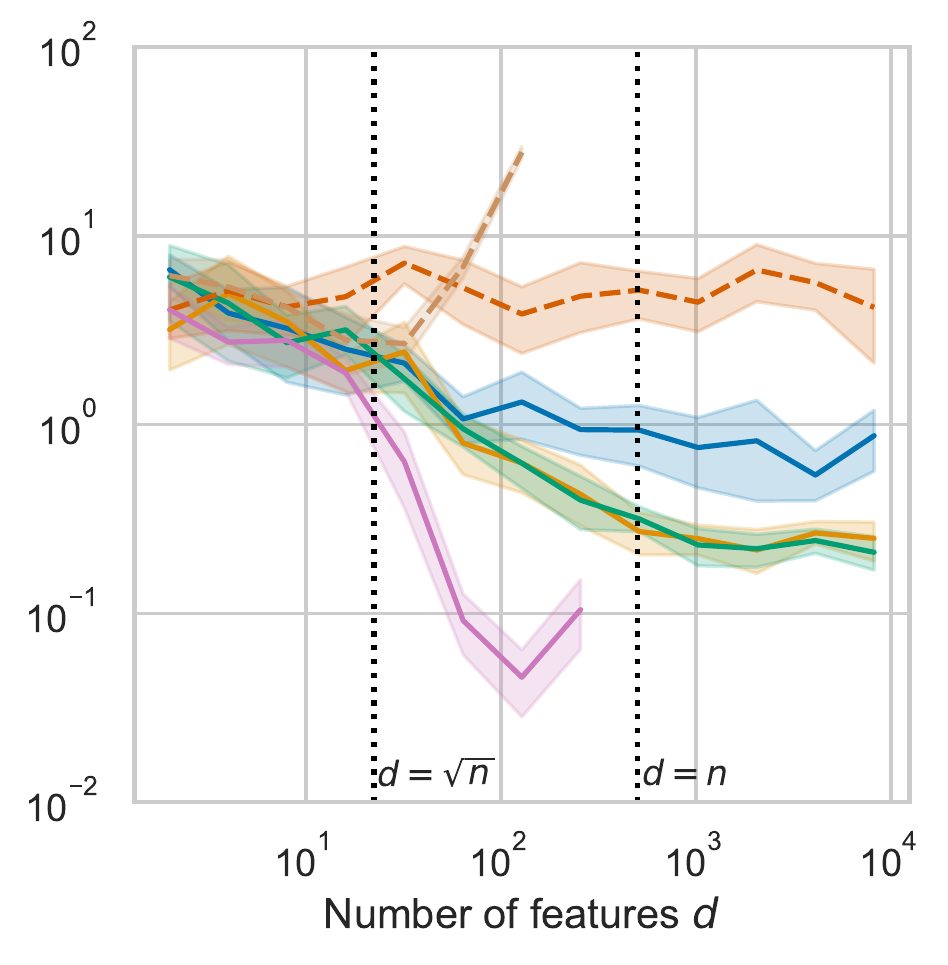}&
        \includegraphics[width=0.27\linewidth]{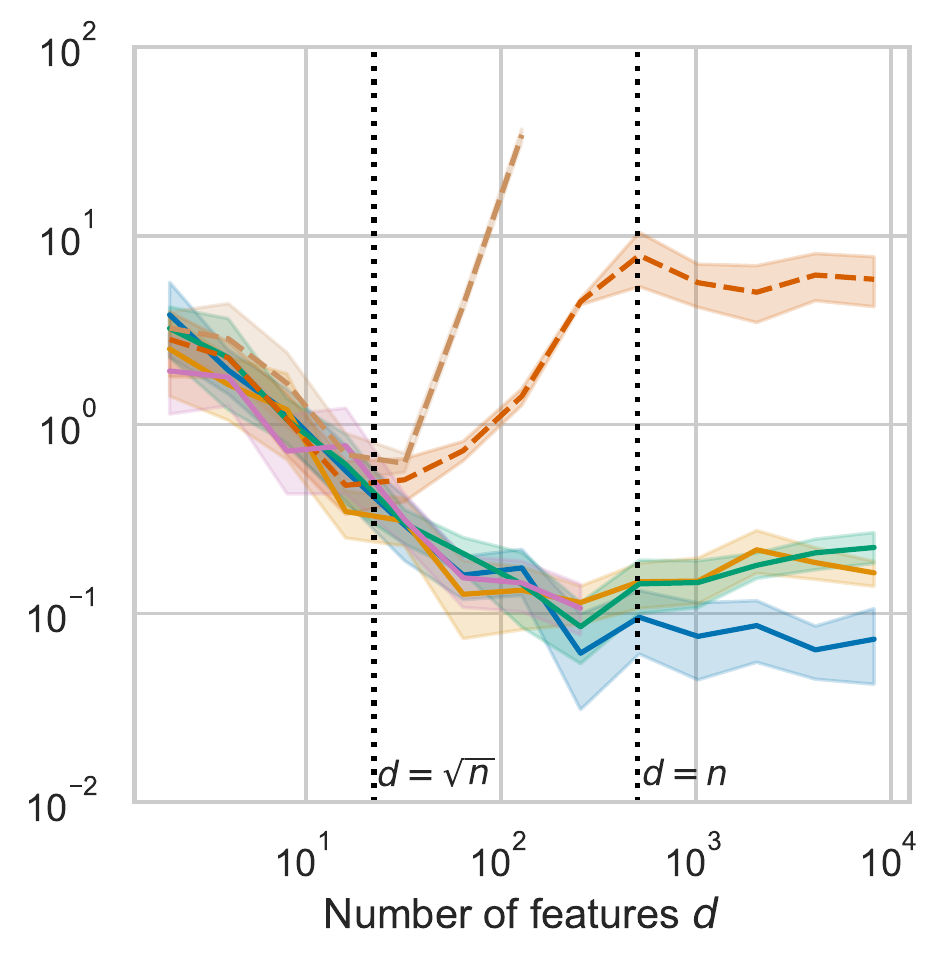} \\
        {(a) Ho-MCAR } & {(b) Ho-MCAR } & {(c) Self-Masked } \\
        + Low-rank model & + Spiked model & + Low-rank model
    \end{tabular}
    \caption{\label{fig:excess_risk_XP} Risk w.r.t.~the input dimension (evaluated on $10^4$ test samples) when $50\%$ of the input data is missing. The $y$-axis corresponds to $R_{\mathrm{mis}}(f)-R^\star= \mathbb{E}\left[ \left( Y-f(X_{\imp},P) \right)^2 \right]-\sigma^2$. The averaged risk is depicted over 10 repetitions within a 95\% confidence interval. }
\end{figure}

In practice, missing data are not always of the Ho-MCAR type, we compare therefore the different algorithms on self-masked data.
%The previous theoretical developments do not hold in the case of self-masked missing data, for which the specific methods are likely to be robust. 
In Figure \ref{fig:excess_risk_XP} (c), we note that specific methods remain competitive for larger $d$ compared to MCAR settings. This was to be expected since those methods were designed to handle complex missing not at random (MNAR) data. However, they still suffer from the curse of dimensionality and turns out to be inefficient in large dimension, compared to all two-step strategies.

\section{Discussion and conclusion}

In this paper, we study the impact of zero imputation in  high-dimensional linear models. 
We demystify this widespread technique, by exposing its implicit regularization mechanism when dealing with MCAR data. 
%While this technique is generally inconsistent, 
We prove that, in high-dimensional regimes, the induced bias is similar to that of ridge regression, commonly accepted by practitioners. 
By providing generalization bounds on SGD trained on zero-imputed data, we establish that such two-step procedures are statistically sound, while being computationally appealing. 

Theoretical results remain to be established beyond the MCAR case, to properly analyze and compare the different strategies for dealing with missing data in MNAR settings (see Figure~\ref{fig:excess_risk_XP} (c)). 
Extending our results to a broader class of functions (escaping linear functions) or even in a classification framework, would be valuable to fully understand the properties of imputation.

\bibliography{references}

\begin{thebibliography}{27}
\providecommand{\natexlab}[1]{#1}
\providecommand{\url}[1]{\texttt{#1}}
\expandafter\ifx\csname urlstyle\endcsname\relax
  \providecommand{\doi}[1]{doi: #1}\else
  \providecommand{\doi}{doi: \begingroup \urlstyle{rm}\Url}\fi

\bibitem[Agarwal et~al.(2019)Agarwal, Shah, Shen, and
  Song]{agarwal2019robustness}
Anish Agarwal, Devavrat Shah, Dennis Shen, and Dogyoon Song.
\newblock On robustness of principal component regression.
\newblock \emph{Advances in Neural Information Processing Systems}, 32, 2019.

\bibitem[Ayme et~al.(2022)Ayme, Boyer, Dieuleveut, and Scornet]{ayme2022near}
Alexis Ayme, Claire Boyer, Aymeric Dieuleveut, and Erwan Scornet.
\newblock Near-optimal rate of consistency for linear models with missing
  values.
\newblock In \emph{International Conference on Machine Learning}, pages
  1211--1243. PMLR, 2022.

\bibitem[Bach and Moulines(2013)]{bach2013non}
Francis Bach and Eric Moulines.
\newblock Non-strongly-convex smooth stochastic approximation with convergence
  rate o (1/n).
\newblock \emph{Advances in neural information processing systems}, 26, 2013.

\bibitem[Bartlett et~al.(2020)Bartlett, Long, Lugosi, and
  Tsigler]{bartlett2020benign}
Peter~L Bartlett, Philip~M Long, G{\'a}bor Lugosi, and Alexander Tsigler.
\newblock Benign overfitting in linear regression.
\newblock \emph{Proceedings of the National Academy of Sciences}, 117\penalty0
  (48):\penalty0 30063--30070, 2020.

\bibitem[Caponnetto and De~Vito(2007)]{caponnetto2007optimal}
Andrea Caponnetto and Ernesto De~Vito.
\newblock Optimal rates for the regularized least-squares algorithm.
\newblock \emph{Foundations of Computational Mathematics}, 7\penalty0
  (3):\penalty0 331--368, 2007.

\bibitem[Chizat and Bach(2020)]{chizat2020implicit}
Lenaic Chizat and Francis Bach.
\newblock Implicit bias of gradient descent for wide two-layer neural networks
  trained with the logistic loss.
\newblock In \emph{Conference on Learning Theory}, pages 1305--1338. PMLR,
  2020.

\bibitem[Dieuleveut et~al.(2017)Dieuleveut, Flammarion, and
  Bach]{dieuleveut2017harder}
Aymeric Dieuleveut, Nicolas Flammarion, and Francis Bach.
\newblock Harder, better, faster, stronger convergence rates for least-squares
  regression.
\newblock \emph{The Journal of Machine Learning Research}, 18\penalty0
  (1):\penalty0 3520--3570, 2017.

\bibitem[Gal and Ghahramani(2016)]{gal2016theoretically}
Yarin Gal and Zoubin Ghahramani.
\newblock A theoretically grounded application of dropout in recurrent neural
  networks.
\newblock \emph{Advances in neural information processing systems}, 29, 2016.

\bibitem[Hsu et~al.(2012)Hsu, Kakade, and Zhang]{hsu2012random}
Daniel Hsu, Sham~M Kakade, and Tong Zhang.
\newblock Random design analysis of ridge regression.
\newblock In \emph{Conference on learning theory}, pages 9--1. JMLR Workshop
  and Conference Proceedings, 2012.

\bibitem[Ipsen et~al.(2022)Ipsen, Mattei, and Frellsen]{ipsen2022deal}
Niels~Bruun Ipsen, Pierre-Alexandre Mattei, and Jes Frellsen.
\newblock How to deal with missing data in supervised deep learning?
\newblock In \emph{ICLR 2022-10th International Conference on Learning
  Representations}, 2022.

\bibitem[Johnstone(2001)]{johnstone2001}
Iain~M. Johnstone.
\newblock {On the distribution of the largest eigenvalue in principal
  components analysis}.
\newblock \emph{The Annals of Statistics}, 29\penalty0 (2):\penalty0 295 --
  327, 2001.
\newblock \doi{10.1214/aos/1009210544}.
\newblock URL \url{https://doi.org/10.1214/aos/1009210544}.

\bibitem[Josse et~al.(2019)Josse, Prost, Scornet, and
  Varoquaux]{josse2019consistency}
Julie Josse, Nicolas Prost, Erwan Scornet, and Ga{\"e}l Varoquaux.
\newblock On the consistency of supervised learning with missing values.
\newblock \emph{arXiv preprint arXiv:1902.06931}, 2019.

\bibitem[Le~Morvan et~al.(2020{\natexlab{a}})Le~Morvan, Josse, Moreau, Scornet,
  and Varoquaux]{lemorvan:hal-02888867}
Marine Le~Morvan, Julie Josse, Thomas Moreau, Erwan Scornet, and Ga{\"e}l
  Varoquaux.
\newblock {NeuMiss networks: differentiable programming for supervised learning
  with missing values}.
\newblock In \emph{{NeurIPS 2020 - 34th Conference on Neural Information
  Processing Systems}}, Vancouver / Virtual, Canada, December
  2020{\natexlab{a}}.
\newblock URL \url{https://hal.archives-ouvertes.fr/hal-02888867}.

\bibitem[Le~Morvan et~al.(2020{\natexlab{b}})Le~Morvan, Prost, Josse, Scornet,
  and Varoquaux]{le2020linear}
Marine Le~Morvan, Nicolas Prost, Julie Josse, Erwan Scornet, and Ga{\"e}l
  Varoquaux.
\newblock Linear predictor on linearly-generated data with missing values: non
  consistency and solutions.
\newblock In \emph{International Conference on Artificial Intelligence and
  Statistics}, pages 3165--3174. PMLR, 2020{\natexlab{b}}.

\bibitem[Le~Morvan et~al.(2021)Le~Morvan, Josse, Scornet, and
  Varoquaux]{le2021sa}
Marine Le~Morvan, Julie Josse, Erwan Scornet, and Ga{\"e}l Varoquaux.
\newblock What’sa good imputation to predict with missing values?
\newblock \emph{Advances in Neural Information Processing Systems},
  34:\penalty0 11530--11540, 2021.

\bibitem[Mourtada(2019)]{mourtada2019exact}
Jaouad Mourtada.
\newblock Exact minimax risk for linear least squares, and the lower tail of
  sample covariance matrices.
\newblock \emph{arXiv preprint arXiv:1912.10754}, 2019.

\bibitem[Mourtada and Rosasco(2022)]{mourtada2022elementary}
Jaouad Mourtada and Lorenzo Rosasco.
\newblock An elementary analysis of ridge regression with random design.
\newblock \emph{arXiv preprint arXiv:2203.08564}, 2022.

\bibitem[Muzellec et~al.(2020)Muzellec, Josse, Boyer, and
  Cuturi]{muzellec2020missing}
Boris Muzellec, Julie Josse, Claire Boyer, and Marco Cuturi.
\newblock Missing data imputation using optimal transport.
\newblock In \emph{International Conference on Machine Learning}, pages
  7130--7140. PMLR, 2020.

\bibitem[Pedregosa et~al.(2011)Pedregosa, Varoquaux, Gramfort, Michel, Thirion,
  Grisel, Blondel, Prettenhofer, Weiss, Dubourg, Vanderplas, Passos,
  Cournapeau, Brucher, Perrot, and Duchesnay]{scikit-learn}
F.~Pedregosa, G.~Varoquaux, A.~Gramfort, V.~Michel, B.~Thirion, O.~Grisel,
  M.~Blondel, P.~Prettenhofer, R.~Weiss, V.~Dubourg, J.~Vanderplas, A.~Passos,
  D.~Cournapeau, M.~Brucher, M.~Perrot, and E.~Duchesnay.
\newblock Scikit-learn: Machine learning in {P}ython.
\newblock \emph{Journal of Machine Learning Research}, 12:\penalty0 2825--2830,
  2011.

\bibitem[Pesme et~al.(2021)Pesme, Pillaud-Vivien, and
  Flammarion]{pesme2021implicit}
Scott Pesme, Loucas Pillaud-Vivien, and Nicolas Flammarion.
\newblock Implicit bias of sgd for diagonal linear networks: a provable benefit
  of stochasticity.
\newblock \emph{Advances in Neural Information Processing Systems},
  34:\penalty0 29218--29230, 2021.

\bibitem[Polyak and Juditsky(1992)]{polyak1992acceleration}
Boris~T Polyak and Anatoli~B Juditsky.
\newblock Acceleration of stochastic approximation by averaging.
\newblock \emph{SIAM journal on control and optimization}, 30\penalty0
  (4):\penalty0 838--855, 1992.

\bibitem[Rubin(1976)]{RUBIN76}
DONALD~B. Rubin.
\newblock {Inference and missing data}.
\newblock \emph{Biometrika}, 63\penalty0 (3):\penalty0 581--592, 12 1976.
\newblock ISSN 0006-3444.
\newblock \doi{10.1093/biomet/63.3.581}.
\newblock URL \url{https://doi.org/10.1093/biomet/63.3.581}.

\bibitem[Sridharan et~al.(2008)Sridharan, Shalev-Shwartz, and
  Srebro]{sridharan2008fast}
Karthik Sridharan, Shai Shalev-Shwartz, and Nathan Srebro.
\newblock Fast rates for regularized objectives.
\newblock \emph{Advances in neural information processing systems}, 21, 2008.

\bibitem[Srivastava et~al.(2014)Srivastava, Hinton, Krizhevsky, Sutskever, and
  Salakhutdinov]{srivastava2014dropout}
Nitish Srivastava, Geoffrey Hinton, Alex Krizhevsky, Ilya Sutskever, and Ruslan
  Salakhutdinov.
\newblock Dropout: a simple way to prevent neural networks from overfitting.
\newblock \emph{The journal of machine learning research}, 15\penalty0
  (1):\penalty0 1929--1958, 2014.

\bibitem[Van~Buuren and Groothuis-Oudshoorn(2011)]{van2011mice}
Stef Van~Buuren and Karin Groothuis-Oudshoorn.
\newblock mice: Multivariate imputation by chained equations in r.
\newblock \emph{Journal of statistical software}, 45:\penalty0 1--67, 2011.

\bibitem[Yao et~al.(2007)Yao, Rosasco, and Caponnetto]{yao2007early}
Yuan Yao, Lorenzo Rosasco, and Andrea Caponnetto.
\newblock On early stopping in gradient descent learning.
\newblock \emph{Constructive Approximation}, 26\penalty0 (2):\penalty0
  289--315, 2007.

\bibitem[Yoon et~al.(2018)Yoon, Jordon, and Schaar]{yoon2018gain}
Jinsung Yoon, James Jordon, and Mihaela Schaar.
\newblock Gain: Missing data imputation using generative adversarial nets.
\newblock In \emph{International conference on machine learning}, pages
  5689--5698. PMLR, 2018.

\end{thebibliography}
\bibliographystyle{plainnat}

\newpage
\appendix

\section{Notations}

\label{app:notations}
For two vectors (or matrices) $a,b$, we denote by $a\odot b$ the Hadamard product (or component-wise product). $[n]=\left\{1,2,...,n\right\}$. For two symmetric matrices $A$ and $B$, $A\preceq B$ means that $B-A$ is positive semi-definite. The symbol $\lesssim$ denotes the inequality up to a universal constant.  Table~\ref{tab:notations} summarizes the  notations used throughout the paper.

\begin{table}[h!]
    \centering
        \caption{Notations}
            \label{tab:notations}

    \begin{tabular}{l l} \toprule
         $P$ & Mask\\ 
         $\F$ &  Set of linear functions \\ 
         $B_{\imp}$ & Imputation bias   \\ 
          $\Sigma$ & $\esp XX^\top$  \\
          $\lambda_j$ & eigenvalues of $\Sigma$  \\
          $v_j$ & eigendirections of $\Sigma$  \\
          $\Sigma_P$ & $\esp PP^\top$  \\
         $L^2$& the largest second moments $\mathrm{max}_j \esp X_j^2$ (\cref{ass:2moment_sup})\\
         $\ell^2$ &the smallest second moments $\mathrm{min}_j \esp X_j^2$ (\cref{ass:2moment_inf})\\
         $\theta^\star$ &Best linear predictor on complete data \\
         $\theta_{\imp}^\star$&Best linear predictor on imputed data\\
         $r$& Rank of $\Sigma$\\
         $\rho_j$ & Theoretical proportion of observed entries \\
         & for the $j$-th variable in a MCAR setting \\
         $V$& Covariance matrix associated to the missing patterns \\
         $C$&  Covariance matrix $V$ renormalized by $(\rho_j)_j$ defined in \eqref{eq:corrMask}\\
         $\kappa$ &  Kurtosis of the input $X$\\
         \bottomrule
    \end{tabular}
    \end{table}

\section{Proof of the main results}

\subsection{Proof of \Cref{lem:RmisVsR}}
The proof is based on the definition of the conditional
expectation, and given that %let $P$ be a fixed missing pattern, 
\begin{align*}
R^{\star} & =\esp\left[\left(Y-\esp\left[Y|X\right]\right)^{2}\right].
\end{align*}
Note that $\esp\left[Y|X,P\right]=\esp\left[f^{\star}(X)+\epsilon|X,P\right]=\esp\left[f^{\star}(X)|X,P\right]=f^{\star}(X)$ (by independence of $\epsilon$ and $P$).
Therefore, 
\begin{align*}
R^{\star} & =\esp\left[\left(Y-f^{\star}(X)\right)^{2}\right]\\
 & \leq\esp\left[\left(Y-\esp\left[Y|X,P\right]\right)^{2}\right]\\
 & \leq\esp\left[\left(Y-\esp\left[Y|X_{\imp},P\right]\right)^{2}\right]\\
 & \leq R_{\mathrm{mis}}^{\star},
\end{align*}
using that $\esp\left[Y|X_{\imp},P\right]$ is a measurable function
of $\left(X,P\right)$.

\subsection{Preliminary lemmas}
\paragraph{Notation} Let $X_a$ be a random variable of law $\mathcal{L}_a$ (a modified version of the law of the underlying input $X$) on $\R^d$, and for $f\in\F$ define
\begin{equation*}
    R_a(f)=\esp\left[\left(Y-f(X_a)\right)^{2}\right],
\end{equation*}
the associate risk. The Bayes risk is given by
\begin{equation*}
    R_a^\star(\F)=\inf_{f\in \F}\esp\left[\left(Y-f(X_a)\right)^{2}\right],
\end{equation*}
if the infimum is reached, we denote by $f_a^\star\in\arg\min_{f\in \F}R_a(f)$. The discrepancy between both risks, involving either the modified input $X_a$ or the initial input $X$, can be measured through the following bias: 
\begin{equation*}
    B_{a}=R_a^\star(\F)-R^\star(\F).
\end{equation*}

\paragraph{General decomposition}
The idea of the next lemma is to compare $R_a(f)$ with the true risk $R(f)$.
\begin{lemma}\label{lem:Bias_variance_X'}
If $(X_a \perp\!\!\!\perp Y)|X$, then, for all $\theta\in\R^{d}$,
\[
R_a\left(f_\theta\right)=R\left(g_{\theta}\right)+\left\Vert\theta\right\Vert _{\Gamma}^{2},
\]
where $g_{\theta}(X)=\theta^\top\esp\left[X_a|X\right]$ and $\Gamma=\esp\left[(X_a-\esp\left[X_a|X\right])(X_a-\esp\left[X_a|X\right])^\top\right] $ the integrated conditional covariance matrix. In consequence, if there exists an invertible linear application $H$ such that, $\esp\left[X_a|X\right] = H^{-1}X$, then 
\begin{itemize}
    \item For all $\theta\in\R^d$, $g_{\theta}$ is a linear function 
    and 
    \begin{equation}\label{eq:BVA_risk_star}
R_a^\star(\F)=\inf_{\theta\in\R^d} \left\{R\left(f_{\theta}\right)+\left\Vert\theta\right\Vert _{H^\top \Gamma H}^{2}\right\}.
    \end{equation}

    \item If $\lambda_{\rm{max}}(H \Gamma H^\top)\leq \Lambda $, then 
    \begin{equation}\label{eq:BVA_bias}
        B_a(\F)\leq \inf_{\theta \in \R^d} \left\{R(f_\theta)+ \Lambda \left\Vert\theta\right\Vert _{2}^{2}\right\}= B_{\rm{ridge},\Lambda}.
    \end{equation}
    \item If $\lambda_{\rm{min}}(\Gamma)\geq \mu>0 $, then 
    \begin{equation}\label{eq:BVA_norm}
        \left\Vert\theta_a^\star\right\Vert _{2}^{2}\leq \frac{B_a(\F)}{\mu} . 
    \end{equation}
\end{itemize}
\end{lemma}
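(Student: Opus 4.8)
The plan is to first establish the general bias--variance identity $R_a(f_\theta) = R(g_\theta) + \norm{\theta}_\Gamma^2$ by a single conditioning argument, and then read off the three bullet points as elementary consequences. For the identity, fix $\theta \in \R^d$ and split the prediction error as $Y - \theta^\top X_a = A - B$, where $A := Y - \theta^\top \esp[X_a \mid X] = Y - g_\theta(X)$ and $B := \theta^\top (X_a - \esp[X_a \mid X])$. Expanding the square gives $R_a(f_\theta) = \esp[A^2] - 2\esp[AB] + \esp[B^2]$, and by the definitions of $R$ and $\Gamma$ one identifies $\esp[A^2] = R(g_\theta)$ and $\esp[B^2] = \theta^\top \Gamma \theta = \norm{\theta}_\Gamma^2$. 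The only substantive point is that the cross term vanishes: $A$ is $\sigma(X,Y)$-measurable, so $\esp[AB] = \esp[A\,\esp[B \mid X,Y]]$, and $\esp[B \mid X,Y] = \theta^\top(\esp[X_a \mid X,Y] - \esp[X_a \mid X]) = 0$ because $(X_a \perp\!\!\!\perp Y) \mid X$ forces $\esp[X_a \mid X,Y] = \esp[X_a \mid X]$.

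Next, assume $\esp[X_a \mid X] = H^{-1} X$ with $H$ invertible. Then $g_\theta(X) = \theta^\top H^{-1} X = f_{(H^{-1})^\top \theta}(X)$ is linear, hence lies in $\F$, and the identity becomes $R_a(f_\theta) = R(f_{(H^{-1})^\top \theta}) + \norm{\theta}_\Gamma^2$. Taking the infimum over $\theta$ and applying the bijective change of variables $\eta = (H^{-1})^\top \theta$, i.e.\ $\theta = H^\top \eta$, yields $R_a^\star(\F) = \inf_{\eta \in \R^d}\{R(f_\eta) + \norm{H^\top \eta}_\Gamma^2\} = \inf_{\eta \in \R^d}\{R(f_\eta) + \norm{\eta}_{H \Gamma H^\top}^2\}$, which is \eqref{eq:BVA_risk_star}. (The matrices $H\Gamma H^\top$ and $H^\top \Gamma H$ are similar, so all eigenvalue statements below are insensitive to the transpose; in the applications $H$ is diagonal and the two coincide.)

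The two remaining bounds follow by sandwiching the quadratic form $\norm{\cdot}_{H\Gamma H^\top}^2$ between its extreme eigenvalues. For \eqref{eq:BVA_bias}, if $\lambda_{\max}(H\Gamma H^\top) \le \Lambda$ then $\norm{\eta}_{H\Gamma H^\top}^2 \le \Lambda \norm{\eta}_2^2$, so \eqref{eq:BVA_risk_star} gives $R_a^\star(\F) \le \inf_\eta\{R(f_\eta) + \Lambda \norm{\eta}_2^2\}$; subtracting $R^\star(\F)$ throughout and recalling \eqref{eq:def_Bridge} gives $B_a(\F) \le B_{\mathrm{ridge},\Lambda}$. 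For \eqref{eq:BVA_norm}, if $\Gamma \succeq \mu I$ with $\mu > 0$ then $\theta \mapsto R_a(f_\theta) = R(g_\theta) + \norm{\theta}_\Gamma^2$ is continuous and coercive, so the infimum is attained at some $\theta_a^\star$; using $g_{\theta_a^\star} \in \F$, hence $R(g_{\theta_a^\star}) \ge R^\star(\F)$, the identity gives $R_a^\star(\F) = R(g_{\theta_a^\star}) + \norm{\theta_a^\star}_\Gamma^2 \ge R^\star(\F) + \mu \norm{\theta_a^\star}_2^2$, i.e.\ $\norm{\theta_a^\star}_2^2 \le B_a(\F)/\mu$.

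The whole argument hinges on a single genuinely substantive step, namely the vanishing of the cross term $\esp[AB]$; this is exactly where the conditional-independence hypothesis $(X_a \perp\!\!\!\perp Y) \mid X$ enters and is essential --- without it a residual covariance between the imputation noise $X_a - \esp[X_a \mid X]$ and the label $Y$ would survive and spoil the clean decomposition. Everything else (identifying the two quadratic terms, the linear reparametrization, and the eigenvalue sandwiching) is routine; the only additional care needed is to check that $\eta = (H^{-1})^\top \theta$ is a genuine bijection of $\R^d$, so that the two infima in the second step indeed coincide, and that coercivity of $\theta \mapsto R_a(f_\theta)$ when $\Gamma \succ 0$ justifies attainment of the infimum in the last bullet.
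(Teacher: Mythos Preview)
Your proof is correct and follows essentially the same route as the paper's: the same add-and-subtract decomposition for the identity, the same change of variables $\theta = H^\top\eta$ for \eqref{eq:BVA_risk_star}, and the same eigenvalue sandwiching for the two bounds. Your handling of the cross term is in fact more explicit than the paper's (which conditions on $X$ and tacitly uses that $A$ and $B$ are conditionally independent given $X$); your conditioning on $(X,Y)$ is an equally valid---and arguably cleaner---way to invoke the hypothesis $(X_a \perp\!\!\!\perp Y)\mid X$, and your remark on $H\Gamma H^\top$ versus $H^\top\Gamma H$ correctly flags a harmless transpose inconsistency already present in the paper.
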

\begin{remark}
\cref{eq:BVA_norm} is crucial because a bound on the bias $B_a(\F)$ actually gives a bound for $\left\Vert\theta_a^\star\right\Vert _{2}^{2}$ too. 
This will be of particular interest for  \cref{thm:SGD_bound}.  
\end{remark}

\begin{proof}
\begin{align*}
R_a\left(f_{\theta}\right) & =\esp\left[\left(Y-\theta^{\top}X_a\right)^{2}\right]\\
 & =\esp\left[\esp\left[\left(Y-\esp\left[\theta^{\top}X_a|X\right]+\esp\left[\theta^{\top}X_a|X\right]-\theta^{\top}X_a\right)^{2}\Big| X\right]\right]\\
 & =\esp\left[\left(Y-\esp\left[\theta^{\top}X_a|X\right]\right)^{2}\right]+\esp\left[\esp\left[\left(\esp\left[\theta^{\top}X_a|X\right]-\theta^{\top}X_a\right)^{2} \Big| X\right]\right]\\
 & =\esp\left[\left(Y-g_\theta(X)\right)^{2}\right]+\esp\left[\esp\left[\left(\esp\left[\theta^{\top}X_a|X\right]-\theta^{\top}X_a\right)^{2} \Big| X\right]\right] \\
 & =R(g_\theta) +\esp\left[\esp\left[\left(\esp\left[\theta^{\top}X_a|X\right]-\theta^{\top}X_a\right)^{2} \Big| X\right]\right].
\end{align*}
since $\esp\left[\esp\left[\theta^{\top}X_a|X\right]-\theta^{\top}X_a|X\right]=0$.
Furthermore, 
\begin{align*}
\esp\left[\esp\left[\left(\esp\left[\theta^{\top}X_a|Z\right]-\theta^{\top}X_a\right)^{2}|X\right]\right] & =\theta^{\top}\esp\left[\left(\esp\left[X_a|X\right]-X_a\right)\left(\esp\left[X_a|X\right]-X_a\right)^{\top}\right]\theta\\
 & =\esp\left[\theta^{\top}\esp\left[\left(\esp\left[X_a|X\right]-X_a\right)\left(\esp\left[X_a|X\right]-X_a\right)^{\top}|X\right]\theta\right]\\
 & =\esp\left[\left\Vert \theta\right\Vert _{\esp\left[\left(\esp\left[X_a|X\right]-X_a\right)\left(\esp\left[X_a|X\right]-X_a\right)^{\top}|X\right]}^{2}\right]\\
 & =\esp\left[\left\Vert \theta\right\Vert _{\Gamma}^{2}\right].
\end{align*}
Finally,
\begin{align*}
R_a\left(f_{\theta}\right) & = R(g_\theta) + \|\theta\|_\Gamma^2.
\end{align*}

Assume that an invertible matrix $H$ exists such that $g_{\theta}(X)=\theta^\top H^{-1}X$, thus $g_\theta$ is a linear function. Equation  \eqref{eq:BVA_risk_star} is then obtained by using a change of variable: $\theta'=(H^{-1})^\top \theta= (H^\top)^{-1}\theta$ and $\theta=H^\top \theta'$. Thus, we have $g_{\theta'}(X)=\theta^\top X=f_{\theta}(X)$ and 
\begin{align*}
R_a\left(f_{\theta'}\right) & = R(f_\theta) + \|H^\top \theta'\|_\Gamma^2\\
& = R(f_\theta) + \| \theta'\|_{H\Gamma H^\top}^2.
\end{align*}

Then using $H \Gamma H^\top\preceq \Lambda I$ proves \eqref{eq:BVA_bias}. Note that, without resorting to the previous change of variable, the bias can be written as
\begin{equation}
    B_a(\F)=R\left(g_{\theta_a^\star}\right)-R\left(f_{\theta^\star}\right)+\left\Vert\theta_a^\star\right\Vert _{ \Gamma}^{2}.
\end{equation}
By linearity of  $g_{\theta_a^\star}$, $R\left(g_{\theta_a^\star}\right)\geq R\left(f_{\theta^\star}\right)=R^\star(\F)$ (because $g_{\theta_a^\star}\in\F$). 

Thus, $\left\Vert\theta_a^\star\right\Vert _{ \Gamma}^{2}\leq B_a(\F)$. Assuming $\mu I\preceq \Gamma$ gives \eqref{eq:BVA_norm}, as
\begin{align*}
\mu \left\Vert\theta_a^\star\right\Vert^2 
    \leq \left\Vert\theta_a^\star\right\Vert _{ \Gamma}^{2}\leq B_a(\F).
\end{align*}
\end{proof}

\subsection{Proof of \Cref{sec:bias}}\label{sec:proofSec3}

We consider the case of imputed-by-0 data, i.e., 
\begin{equation*}
    X_{\rm{\imp}}=P\odot X.
\end{equation*}
Under the MCAR setting
(\cref{ass:MCAR}), 
\begin{equation*}
    \esp \left[X_{\rm{\imp}}|X\right]= H^{-1}X,
\end{equation*}
with $H=\rm{diag}(\rho_1^{-1},...,\rho_d^{-1})$ (variables always missing are discarded) and $(\rho_j)_{j\in [d]}$ the observation rates associated to each input variable. 
\begin{proof}[Proof of \Cref{prop:regularization_implicite}]
    For $i,j\in[d]$,
\begin{align}
\nonumber
\Gamma_{ij}&=\esp\left[\left(\left(X_{\imp}\right)_{i}-\esp\left[\left(X_{\imp}\right)_{i}|X\right]\right)\left(\left(X_{\imp}\right)_{j}-\esp\left[\left(X_{\imp}\right)_{j}|X\right]\right)\right] \\
\nonumber
& =\esp\left[X_{i}X_{j}(P_{i}-\esp P_{i})(P_{j}-\esp P_{j})\right]\\
\nonumber
 &
=\esp\left[X_{i}X_{j}\right]{\rm Cov}(P_{i},P_{j}),\\
&=\Sigma_{ij} V_{ij} \label{eq:gamma=sigmaV}
\end{align}
since $P$ and $X$ are independent and with $V$ defined in \Cref{prop:regularization_implicite}. Therefore, applying \cref{lem:Bias_variance_X'}
with $\Gamma=\Sigma\odot V$ proves the first part of \Cref{prop:regularization_implicite}. Regarding the second part,  under the Ho-MCAR assumption, one has $V=\rho(1-\rho)I$, thus $\Gamma=\rho(1-\rho)\mathrm{diag}(\Sigma)$. Furthermore, if $L^2=\ell^2$, then $\mathrm{diag}(\Sigma)=L^2I$ which gives $\Gamma=L^2\rho(1-\rho)I$. 
\end{proof}

\begin{proof}[Proof of \Cref{prop:biasBernoulliHomo,prop:biasMCAR}]
Under \Cref{ass:MCAR}, since $H$ is a diagonal matrix,
\[
H^{\top}\Gamma H=\Sigma\odot C,
\]
where $C$ is defined in \cref{eq:corrMask}. 
\begin{itemize}
    \item Under \Cref{ass:bernoulli_model}, the matrix $C$ satisfies $ C=\frac{1-\rho}{\rho}I$. Moreover, under \Cref{ass:2moment_sup} (resp.\ \Cref{ass:2moment_inf}), one has  $\Sigma\odot C\preceq \frac{1-\rho}{\rho} L^2 I= \lambda_{\imp}$ (resp.\ $\Sigma\odot C\succeq \frac{1-\rho}{\rho} \ell^2 I=\lambda_{\imp}'$) using \eqref{eq:BVA_risk_star}, we obtain 
    \begin{equation*}
        \inf_{\theta\in\R^d} \left\{R\left(\theta\right)+\lambda_{\imp}'\left\Vert\theta\right\Vert _{2}^{2}\right\}\leq R_{\imp}^\star \leq \inf_{\theta\in\R^d} \left\{R\left(\theta\right)+\lambda_{\imp}\left\Vert\theta\right\Vert _{2}^{2}\right\}.
    \end{equation*}
    Subtracting $R^\star(\F)$, one has
    \begin{equation*}
        B_{\mathrm{ridge},\lambda_{\imp}'}\leq B_\imp\leq B_{\mathrm{ridge},\lambda_{\imp}},
    \end{equation*}
which concludes the proof of \Cref{prop:biasBernoulliHomo}.
\item Under \cref{ass:MCAR}, we have $H\Gamma H^{\top}=\Sigma\odot C$. Using \cref{lem:hadamard_norm}, we obtain for all $\theta $, 
\begin{equation*}
    \left\Vert\theta\right\Vert_{H\Gamma H^{\top}}^2=\left\Vert\theta\right\Vert_{\Sigma\odot C}^2\leq \lambda_{\rm{max}}(C)\left\Vert\theta\right\Vert_{\rm{diag}(\Sigma)}^2.
\end{equation*}

Under \Cref{ass:2moment_sup}, we have $\mathrm{diag}(\Sigma)\preceq L^2I$, thus 
\begin{equation*}
    \left\Vert\theta\right\Vert_{H \Gamma H^{\top}}^2\leq  L^2\lambda_{\rm{max}}(C)\left\Vert\theta\right\Vert_{2}^2.
\end{equation*}
This shows that $\lambda_{\mathrm{max}}(H\Gamma H^{\top})\leq L^2\lambda_{\rm{max}}(C)=\Lambda_{\imp} $
We conclude on \cref{prop:biasMCAR} using \cref{eq:BVA_risk_star}. 
\end{itemize}
\end{proof}
\subsection{Proof of \Cref{lem:norm_control}}
\begin{proof}
Using \eqref{eq:gamma=sigmaV}, we have $\Gamma=V\odot \Sigma$. Using that $\lambda_{\mathrm{min}}(V)I \preceq V$, by \Cref{lem:hadamard_monotonicity}, we obtain 
\begin{equation*}
    \lambda_{\mathrm{min}}(V)I\odot \Sigma \preceq \Gamma,
\end{equation*}
and equivalently $\lambda_{\mathrm{min}}(V)\odot \mathrm{diag}(\Sigma) \preceq \Gamma$.
Under \cref{ass:2moment_inf}, we have $\ell^2 I\preceq\mathrm{diag}(\Sigma)$, thus 
\begin{equation*}
    \ell^2\lambda_{\mathrm{min}}(V) I \preceq \Gamma.
\end{equation*}
Therefore, $\lambda_{\rm min}(\Gamma)\geq \ell^2\lambda_{\mathrm{min}}(V)$. Thus, using \eqref{eq:BVA_norm}, we obtain the first part of \cref{lem:norm_control}:
    \begin{equation}
            \ell^2\lambda_{\min}(V)\left\Vert \theta_{\imp}^\star\right\Vert _{2}^{2}\leq B_{\imp}(\F). 
        \end{equation}
Under \cref{ass:bernoulli_model}, $\lambda_{\mathrm{min}}(V)=\rho(1-\rho)$, so that 
\begin{equation}
            \ell^2\rho(1-\rho)\left\Vert \theta_{\imp}^\star\right\Vert _{2}^{2}\leq B_{\imp}(\F),
\end{equation}
which proves the second part of \cref{lem:norm_control}.

\end{proof}

\section{Stochastic gradient descent}
\subsection{Proof of \cref{thm:SGD_bound}}
\begin{lemma}\label{lem:bach_moulines}
   Assume $\left(x_n, \xi_n\right) \in \mathcal{H} \times \mathcal{H}$ are $\mathcal{F}_n$-measurable for a sequence of increasing $\sigma$-fields $\left(\mathcal{F}_n\right)$, $n \geqslant 1$. Assume that $\mathbb{E}\left[\xi_n \mid \mathcal{F}_{n-1}\right]=0, \mathbb{E}\left[\left\|\xi_n\right\|^2 \mid \mathcal{F}_{n-1}\right]$ is finite and $\mathbb{E}\left[\left(\left\|x_n\right\|^2 x_n \otimes x_n\right) \mid \mathcal{F}_{n-1}\right] \preccurlyeq R^2 H$, with $\mathbb{E}\left[x_n \otimes x_n \mid \mathcal{F}_{n-1}\right]=H$ for all $n \geqslant 1$, for some $R>0$ and invertible operator $H$. Consider the recursion $\alpha_n=\left(I-\gamma x_n \otimes x_n\right) \alpha_{n-1}+\gamma \xi_n$, with $\gamma R^2 \leqslant 1$. Then:
$$
\left(1-\gamma R^2\right) \mathbb{E}\left[\left\langle\bar{\alpha}_{n-1}, H \bar{\alpha}_{n-1}\right\rangle\right]+\frac{1}{2 n \gamma} \mathbb{E}\left\|\alpha_n\right\|^2 \leqslant \frac{1}{2 n \gamma}\left\|\alpha_0\right\|^2+\frac{\gamma}{n} \sum_{k=1}^n \mathbb{E}\left\|\xi_k\right\|^2 .
$$  
\end{lemma}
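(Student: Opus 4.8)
This is the classical one-step-contraction-plus-averaging argument for constant-step averaged LMS; the plan is as follows. I would first expand $\|\alpha_n\|^2$ directly from the recursion $\alpha_n=(I-\gamma\, x_n\otimes x_n)\alpha_{n-1}+\gamma\xi_n$. Using $\langle\alpha_{n-1},(x_n\otimes x_n)\alpha_{n-1}\rangle=\langle\alpha_{n-1},x_n\rangle^2$ and $\|(x_n\otimes x_n)\alpha_{n-1}\|^2=\langle\alpha_{n-1},x_n\rangle^2\|x_n\|^2$, this gives the exact identity
\[
\|\alpha_n\|^2=\|\alpha_{n-1}\|^2-2\gamma\langle\alpha_{n-1},x_n\rangle^2+\gamma^2\langle\alpha_{n-1},x_n\rangle^2\|x_n\|^2+2\gamma\langle\alpha_{n-1},\xi_n\rangle-2\gamma^2\langle\alpha_{n-1},x_n\rangle\langle x_n,\xi_n\rangle+\gamma^2\|\xi_n\|^2.
\]

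The only non-routine point is the cross term $-2\gamma^2\langle\alpha_{n-1},x_n\rangle\langle x_n,\xi_n\rangle$: because $x_n$ and $\xi_n$ need not be independent it does not vanish under $\mathbb{E}[\,\cdot\mid\mathcal{F}_{n-1}]$, so it must be absorbed at order $\gamma^2$. I would bound it via $-2uv\le u^2+v^2$ with $u=\gamma\langle\alpha_{n-1},x_n\rangle\|x_n\|$ and $v=\gamma\langle x_n,\xi_n\rangle/\|x_n\|$, followed by Cauchy--Schwarz $\langle x_n,\xi_n\rangle^2\le\|x_n\|^2\|\xi_n\|^2$ (the inequality being trivial if $x_n=0$), which yields $-2\gamma^2\langle\alpha_{n-1},x_n\rangle\langle x_n,\xi_n\rangle\le\gamma^2\langle\alpha_{n-1},x_n\rangle^2\|x_n\|^2+\gamma^2\|\xi_n\|^2$. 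Substituting, taking $\mathbb{E}[\,\cdot\mid\mathcal{F}_{n-1}]$ (so that $\mathbb{E}[\langle\alpha_{n-1},\xi_n\rangle\mid\mathcal{F}_{n-1}]=0$ since $\alpha_{n-1}$ is $\mathcal{F}_{n-1}$-measurable, $\mathbb{E}[\langle\alpha_{n-1},x_n\rangle^2\mid\mathcal{F}_{n-1}]=\langle\alpha_{n-1},H\alpha_{n-1}\rangle$, and $\mathbb{E}[\langle\alpha_{n-1},x_n\rangle^2\|x_n\|^2\mid\mathcal{F}_{n-1}]=\alpha_{n-1}^\top\mathbb{E}[\|x_n\|^2 x_n\otimes x_n\mid\mathcal{F}_{n-1}]\alpha_{n-1}\le R^2\langle\alpha_{n-1},H\alpha_{n-1}\rangle$ by the fourth-moment hypothesis), and then taking total expectations, produces the one-step inequality
\[
\mathbb{E}\|\alpha_n\|^2\le\mathbb{E}\|\alpha_{n-1}\|^2-2\gamma(1-\gamma R^2)\,\mathbb{E}\langle\alpha_{n-1},H\alpha_{n-1}\rangle+2\gamma^2\,\mathbb{E}\|\xi_n\|^2 ,
\]
where $\gamma R^2\le1$ ensures the middle coefficient is nonnegative (a one-line induction, using finiteness of $\mathbb{E}\|\xi_n\|^2$, keeps all the expectations involved finite).

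To conclude, I would rearrange this to isolate $2\gamma(1-\gamma R^2)\mathbb{E}\langle\alpha_{n-1},H\alpha_{n-1}\rangle$, sum the inequality over all indices up to $n$ so that the terms $\mathbb{E}\|\alpha_k\|^2$ telescope, and divide by $2\gamma n$, giving $(1-\gamma R^2)\frac1n\sum_{k=1}^n\mathbb{E}\langle\alpha_{k-1},H\alpha_{k-1}\rangle+\frac1{2n\gamma}\mathbb{E}\|\alpha_n\|^2\le\frac1{2n\gamma}\|\alpha_0\|^2+\frac{\gamma}{n}\sum_{k=1}^n\mathbb{E}\|\xi_k\|^2$. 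The final step is Jensen's inequality: since $H\succeq0$, the map $z\mapsto\langle z,Hz\rangle$ is convex, hence $\langle\bar\alpha_{n-1},H\bar\alpha_{n-1}\rangle\le\frac1n\sum_{k=1}^n\langle\alpha_{k-1},H\alpha_{k-1}\rangle$ with $\bar\alpha_{n-1}=\frac1n\sum_{k=1}^n\alpha_{k-1}$, which turns the displayed inequality into the statement of the lemma. The main obstacle is solely the book-keeping of the $\gamma^2$ cross term so that the constants $\tfrac1{2n\gamma}$, $\tfrac{\gamma}{n}$ and $1-\gamma R^2$ come out exactly as claimed; everything else is conditional-expectation algebra, a telescoping sum, and one convexity inequality.
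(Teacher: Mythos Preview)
The paper does not actually prove \Cref{lem:bach_moulines}: the lemma is stated without proof (it is the key technical lemma from \citet{bach2013non}, quoted verbatim), and the \texttt{proof} environment immediately following it in the appendix is in fact the proof of \Cref{thm:SGD_bound}, which \emph{applies} the lemma rather than establishing it. So there is no paper proof to compare against.

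That said, your plan is correct and is precisely the standard Bach--Moulines argument: expand $\|\alpha_n\|^2$, absorb the $\gamma^2$ cross term by Young plus Cauchy--Schwarz, condition on $\mathcal{F}_{n-1}$ to invoke the martingale and fourth-moment hypotheses, telescope, and finish with Jensen on the convex quadratic $z\mapsto\langle z,Hz\rangle$. The bookkeeping of the constants ($2\gamma^2$ in front of $\mathbb{E}\|\xi_n\|^2$, hence $\gamma/n$ after dividing by $2\gamma n$; and $-2\gamma+2\gamma^2 R^2=-2\gamma(1-\gamma R^2)$ in front of the $H$-term) checks out exactly.
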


\begin{proof}
    The idea is to use \cref{lem:bach_moulines} with 
    \begin{itemize}
        \item $x_k=X_{\imp,k}$, $y_k=Y_k$
        \item $H=\Sigma_{\imp}=\esp\left[X_{\imp,k}X_{\imp,k}^\top\right]= \Sigma_P\odot\Sigma$ where $\Sigma_P=\esp\left[PP^\top\right]$
        \item $\alpha_k=\theta_{\imp,k}-\theta_{\imp}^\star$
        \item $\xi_k= X_{\imp,k}(Y_k-X_{\imp,k}^\top \theta_{\imp}^\star)$
        \item $\gamma=\frac{1}{2R^2\sqrt{n}}$
        \item $R^2=\kappa\mathrm{Tr}(\Sigma)$
    \end{itemize}
We can show, with these notations, that recursion \eqref{eq:SGDiteration} leads to recursion $\alpha_n=\left(I-\gamma x_n \otimes x_n\right) \alpha_{n-1}+\gamma \xi_n$ with $\alpha_0=\theta_0- \theta_{\imp}^\star$. Now, let's check the assumption of \cref{lem:bach_moulines}. 
\begin{itemize}
    \item Let show that $
\esp\left[X_{\imp}X_{\imp}^{\top}\left\Vert X_{\imp}\right\Vert _{2}^{2}\right]\preceq R^{2}\Sigma_{\imp}
$. Indeed,
\[
\esp\left[X_{\imp}X_{\imp}^{\top}\left\Vert X_{\imp}\right\Vert _{2}^{2}\right]\preceq\esp\left[X_{\imp}X_{\imp}^{\top}\left\Vert X\right\Vert _{2}^{2}\right],
\]

using that $\left\Vert X_{\imp}\right\Vert _{2}^{2}\leq\left\Vert X\right\Vert _{2}^{2}$, and $0\preccurlyeq X_{\imp}X_{\imp}^{\top}$.
Then, 
\begin{align*}
\esp\left[X_{\imp}X_{\imp}^{\top}\left\Vert X\right\Vert _{2}^{2}\right] & =\esp\esp\left[X_{\imp}X_{\imp}^{\top}\left\Vert X\right\Vert _{2}^{2}|P\right]\\
 & =\esp\esp\left[PP^{\top}\odot XX^{\top}\left\Vert X\right\Vert _{2}^{2}|P\right]\\
 & =\esp\left[\Sigma_{P}\odot XX^{\top}\left\Vert X\right\Vert _{2}^{2}\right]\\
 & =\Sigma_{P}\odot\left(\esp\left[XX^{\top}\left\Vert X\right\Vert _{2}^{2}\right]\right).
\end{align*}

According to \cref{ass:SGD}, $\esp\left[XX^{\top}\left\Vert X\right\Vert _{2}^{2}\right]\preceq R^{2}\Sigma$,
and \cref{lem:hadamard_monotonicity} lead to 
\[
\esp\left[X_{\imp}X_{\imp}^{\top}\left\Vert X_{\imp}\right\Vert _{2}^{2}\right]\preceq R^{2}\Sigma_{P}\odot\Sigma=R^{2}\Sigma_{\imp}.
\]
\item Define $\epsilon_{\imp}=Y-X_{\imp}^{\top}\theta_{\imp}^{\star}=X^{\top}\theta^{\star}+\epsilon-X_{\imp}^{\top}\theta_{\imp}^{\star}$
. First, we have $\epsilon_{\imp}^{2}\leq3\left(\left(X^{\top}\theta^{\star}\right)^{2}+\epsilon^{2}+\left(X_{\imp}^{\top}\theta_{\imp}^{\star}\right)^{2}\right)$,
then \\
\begin{align*}
\esp\left[\left\Vert \xi\right\Vert _{2}^{2}\right] & =\esp\left[\epsilon_{\imp}^{2}\left\Vert X_{\imp}\right\Vert _{2}^{2}\right]\\
 & \leq3\esp\left[\left(\left(X^{\top}\theta^{\star}\right)^{2}+\epsilon^{2}+\left(X_{\imp}^{\top}\theta_{\imp}^{\star}\right)^{2}\right)\left\Vert X_{\imp}\right\Vert _{2}^{2}\right]\\
 & \leq3\left(\esp\left[\left(X^{\top}\theta^{\star}\right)^{2}\left\Vert X\right\Vert _{2}^{2}\right]+\esp\left[\epsilon^{2}\left\Vert X\right\Vert _{2}^{2}\right] \right.\\
 & \qquad \left. +\esp\left[\left(X_{\imp}^{\top}\theta_{\imp}^{\star}\right)^{2}\left\Vert X_{\imp}\right\Vert _{2}^{2}\right]\right).
\end{align*}

Let remark that, using \cref{ass:SGD}
\begin{align*}
\esp\left[\left(X^{\top}\theta^{\star}\right)^{2}\left\Vert X\right\Vert _{2}^{2}\right] & =\esp\left[\mathrm{\theta^{\star}}^{\top}\left(XX^{\top}\left\Vert X\right\Vert _{2}^{2}\right)\theta^{\star}\right]\left\Vert \theta^{\star}\right\Vert _{\Sigma}^{2}\\
 & \leq R^{2}\mathrm{\theta^{\star}}^{\top}\Sigma\theta\\
 & = R^{2}\left\Vert \theta^{\star}\right\Vert _{\Sigma}^{2}.
\end{align*}

Using the first point, by the same way, $\esp\left[\left(X_{\imp}^{\top}\theta_{\imp}^{\star}\right)^{2}\left\Vert X_{\imp}\right\Vert _{2}^{2}\right]\leq\left\Vert \theta_{\imp}^{\star}\right\Vert _{\Sigma_{\imp}}^{2}.$
By \cref{ass:SGD}, we have also than $\esp\left[\epsilon^{2}\left\Vert X\right\Vert _{2}^{2}\right]\leq\sigma^{2}R^{2}$.
Thus, 
\begin{align*}
\esp\left[\left\Vert \xi\right\Vert _{2}^{2}\right] & \leq3R^{2}\left(\sigma^{2}+\left\Vert \theta^{\star}\right\Vert _{\Sigma}^{2}+\left\Vert \theta_{\imp}^{\star}\right\Vert _{\Sigma_{\imp}}^{2}\right)\\
 & \leq3R^{2}\left(\sigma^{2}+2\left\Vert \theta^{\star}\right\Vert _{\Sigma}^{2}\right),
\end{align*}
because $\left\Vert \theta^{\star}\right\Vert _{\Sigma}^{2}=R\left(\theta^{\star}\right)\leq R_{\imp}\left(\theta_{\imp}^{\star}\right)=\left\Vert \theta_{\imp}^{\star}\right\Vert _{\Sigma_{\imp}}^{2}$.
\end{itemize}
Consequently we can apply \cref{lem:bach_moulines}, to obtain 
\begin{align*}
    &\left(1-\frac{1}{2\sqrt{n}}\right) \mathbb{E}\left[\left\langle{\bar\theta_{\imp,n}-\theta_{\imp}^\star}, \Sigma_{\imp}(\bar\theta_{\imp,n}-\theta_{\imp}^\star) \right\rangle\right]+\frac{1}{2 n \gamma} \mathbb{E}\left\|\theta_{\imp,n}-\theta_{\imp}^\star\right\|^2 \\
    &\leqslant \frac{1}{2 n \gamma}\left\|\theta_{\imp}^\star-\theta_0\right\|^2+\frac{\gamma}{n} \sum_{k=1}^n \mathbb{E}\left\|\xi_k\right\|^2 .
\end{align*}
The choice $\gamma=\frac{1}{2R^2\sqrt{n}}$ leads to 
\begin{equation*}
     \mathbb{E}\left\|\bar\theta_{\imp,n}-\theta_{\imp}^\star\right\|_{\Sigma_{\imp}}^2 \leqslant \frac{2R^2}{ \sqrt{n}}\left\|\theta_{\imp}^\star -\theta_0\right\|^2+4\frac{\sigma^{2}+2\left\Vert \theta^{\star}\right\Vert _{\Sigma}^{2}}{\sqrt{n}}  .
\end{equation*}
We conclude on \Cref{thm:SGD_bound} using that, 
\begin{align*}
    \esp\left[R_{\imp}\left(\bar{\theta}_{\imp}\right)\right]-R^{\star}
    &= \esp\left[R_{\imp}\left(\bar{\theta}_{\imp}\right)\right]-R_{\imp}^\star +R_{\imp}^\star-R^{\star}\\
    &= \mathbb{E}\left\|\bar\theta_{\imp,n}-\theta_{\imp}^\star\right\|_{\Sigma_{\imp}}^2+ B_{\imp}. 
\end{align*}

\end{proof}

\subsection{Proof of \Cref{prop:SGD_bound_bernoulli} and \Cref{cor:SGD_upp_bound}}
\begin{proof}[Proof of \Cref{prop:SGD_bound_bernoulli}]
    First, under \cref{ass:2moment_sup}, $\mathrm{Tr}(\Sigma)\leq dL^2$. Then, initial conditions term with $\theta_0=0$, 
\begin{equation}
    \frac{\kappa\mathrm{Tr}(\Sigma)}{\sqrt{n}}\left\Vert \theta^{\star}_{\imp}\right\Vert _{2}^{2}\leq  \frac{\kappa L^2d}{\sqrt{n}\ell^2\rho(1-\rho) }B_{\imp}(\F),
\end{equation}
using \cref{lem:norm_control}. We obtain \Cref{prop:SGD_bound_bernoulli} using inequality above in \Cref{thm:SGD_bound}. 
\end{proof}
\begin{proof}[proof of \Cref{cor:SGD_upp_bound}]
    We obtain the upper bounds considered that: according to \Cref{prop:biasBernoulliHomo}, $B_{\imp}\leq B_{\mathrm{ridge},\lambda_{\imp}}$; under \Cref{ass:2moment_inf}, $\mathrm{Tr}(\Sigma)\geq d\ell^2$. Then, we put together \Cref{prop:SGD_bound_bernoulli} and ridge bias bound (see \Cref{sec:proof_examples}). 
\end{proof}

\subsection{Miscellaneous}
\begin{propo}
    If $X$ statisfies $\esp\left[XX^{\top}\left\Vert X\right\Vert _{2}^{2}\right]\preceq \kappa \mathrm{Tr}(\Sigma)\Sigma$, then $\esp\left[\epsilon^{2}\left\Vert X\right\Vert _{2}^{2}\right]\leq\sigma^{2}\kappa\mathrm{Tr}(\Sigma)$ with $\sigma^2\leq 2\esp[Y^2]+2\esp[Y^4]^{1/2}$.
\end{propo}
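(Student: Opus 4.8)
The plan is to reduce the claimed fourth-order bound to two elementary pieces. Starting from $\epsilon = Y - X^\top\theta^\star$ and using $(a-b)^2 \le 2a^2 + 2b^2$, I would first write
\[
\esp\left[\epsilon^2\norm{X}_2^2\right] \le 2\,\esp\left[Y^2\norm{X}_2^2\right] + 2\,\esp\left[\left(X^\top\theta^\star\right)^2\norm{X}_2^2\right],
\]
and then bound each term separately using only the hypothesis $\esp[XX^\top\norm{X}_2^2]\preceq \kappa\,\mathrm{Tr}(\Sigma)\Sigma$ and the first-order identity \eqref{eq:espY^2}.

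For the second term, I would write $(X^\top\theta^\star)^2\norm{X}_2^2 = \theta^{\star\top}\big(XX^\top\norm{X}_2^2\big)\theta^\star$, take expectations, and apply the hypothesis to get $\esp[(X^\top\theta^\star)^2\norm{X}_2^2] \le \kappa\,\mathrm{Tr}(\Sigma)\,\norm{\theta^\star}_\Sigma^2$; then \eqref{eq:espY^2} gives $\norm{\theta^\star}_\Sigma^2 \le \esp Y^2$, so this term is at most $\kappa\,\mathrm{Tr}(\Sigma)\,\esp Y^2$. For the first term, I would use Cauchy--Schwarz, $\esp[Y^2\norm{X}_2^2] \le \esp[Y^4]^{1/2}\,\esp[\norm{X}_2^4]^{1/2}$, and observe that $\esp[\norm{X}_2^4] = \mathrm{Tr}\big(\esp[XX^\top\norm{X}_2^2]\big) \le \mathrm{Tr}\big(\kappa\,\mathrm{Tr}(\Sigma)\Sigma\big) = \kappa\,\mathrm{Tr}(\Sigma)^2$. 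I would also record the easy fact that $\kappa \ge 1$: taking traces in the hypothesis and using Jensen, $\kappa\,\mathrm{Tr}(\Sigma)^2 \ge \esp[\norm{X}_2^4] \ge (\esp\norm{X}_2^2)^2 = \mathrm{Tr}(\Sigma)^2$. Hence $\esp[\norm{X}_2^4]^{1/2} \le \sqrt{\kappa}\,\mathrm{Tr}(\Sigma) \le \kappa\,\mathrm{Tr}(\Sigma)$, and the first term is at most $\kappa\,\mathrm{Tr}(\Sigma)\,\esp[Y^4]^{1/2}$.

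Putting the two estimates together yields
\[
\esp\left[\epsilon^2\norm{X}_2^2\right] \le \kappa\,\mathrm{Tr}(\Sigma)\left(2\,\esp Y^2 + 2\,\esp[Y^4]^{1/2}\right),
\]
which is exactly the assertion of Assumption~\ref{ass:SGD} with $\sigma^2 = 2\,\esp[Y^2] + 2\,\esp[Y^4]^{1/2}$. There is no real obstacle in this argument: each step is a one-line inequality, and the only mild subtlety is remembering to use $\kappa \ge 1$ so that the $\sqrt{\kappa}$ produced by Cauchy--Schwarz can be absorbed into $\kappa$, keeping the final constant in the stated form.
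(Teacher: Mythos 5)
Your proof is correct and follows essentially the same route as the paper's: the same decomposition $\epsilon^2 \le 2Y^2 + 2(X^\top\theta^\star)^2$, the same Cauchy--Schwarz plus trace argument for the term $\esp[Y^2\norm{X}_2^2]$, and the same quadratic-form bound $\esp[(X^\top\theta^\star)^2\norm{X}_2^2]\le \kappa\,\mathrm{Tr}(\Sigma)\norm{\theta^\star}_\Sigma^2$ for the other term. You are in fact slightly more careful than the paper, which silently absorbs the $\sqrt{\kappa}$ produced by Cauchy--Schwarz into $\kappa$ without recording the Jensen argument showing $\kappa\ge 1$.
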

\begin{proof}
    \begin{align*}
\esp\left[\epsilon^{2}\left\Vert X\right\Vert _{2}^{2}\right] & =\esp\left[\left(Y-X^{\top}\theta^{\star}\right)^{2}\left\Vert X\right\Vert _{2}^{2}\right]\\
 & \leq2\esp\left[\left(\left(X^{\top}\theta^{\star}\right)^{2}+Y^{2}\right)\left\Vert X\right\Vert _{2}^{2}\right]\\
 & \leq2\esp\left[Y^{2}\left\Vert X\right\Vert _{2}^{2}\right]+2\esp\left[\left(X^{\top}\theta^{\star}\right)^{2}\left\Vert X\right\Vert _{2}^{2}\right].
\end{align*}
Regarding the first term, by Cauchy Schwarz, 
\begin{align*}
\esp\left[Y^{2}\left\Vert X\right\Vert _{2}^{2}\right]^{2} & \leq\esp\left[Y^{4}\right]\esp\left[\left\Vert X\right\Vert _{2}^{4}\right]\\
 & \leq\esp\left[Y^{4}\right]\esp\left[\mathrm{Tr}\left(XX^{\top}\left\Vert X\right\Vert _{2}^{2}\right)\right]\\
 & \leq\esp\left[Y^{4}\right]\kappa\mathrm{Tr}(\Sigma)^{2}.
\end{align*}
As for the second term,
\begin{align*}
\esp\left[\left(X^{\top}\theta^{\star}\right)^{2}\left\Vert X\right\Vert _{2}^{2}\right] & =\esp\left[(\theta^{\star})^{\top}XX^{\top}\left\Vert X\right\Vert _{2}^{2}\theta^{\star}\right]\\
 & \leq\kappa\mathrm{Tr}(\Sigma)\esp\left[(\theta^{\star})^{\top}\Sigma\theta^{\star}\right]\\
 &\leq\kappa\mathrm{Tr}(\Sigma)\left\Vert \theta^{\star}\right\Vert _{2}^{2}.
\end{align*}
\[
\esp\left[\epsilon^{2}\left\Vert X\right\Vert _{2}^{2}\right]\leq\esp\left[Y^{4}\right]^{\frac{1}{2}}\kappa\mathrm{Tr}(\Sigma)+\kappa\mathrm{Tr}(\Sigma)\left\Vert \theta^{\star}\right\Vert _{\Sigma}^{2}\leq\sigma^{2}\kappa\mathrm{Tr}(\Sigma)\left\Vert \theta^{\star}\right\Vert _{\Sigma}^{2}.
\]
\end{proof}

\section{Details on examples}\label{sec:proof_examples}
Recall that 
\begin{align}
B_{{\rm {ridge},\lambda}}(\F)  &=\lambda \left\Vert\theta^{\star}\right\Vert _{\Sigma(\Sigma+\lambda I)^{-1}}^{2}\\&= \lambda\sum_{j=1}^{d}\frac{\lambda_{j}}{\lambda_{j}+\lambda}(v_{j}^{\top}\theta^{\star})^{2}. 
\end{align}

\subsection{Low-rank covariance matrix (\cref{ex:low_rank}) }

\begin{propo}[Low-rank covariance matrix with equal singular values]
Consider a covariance matrix with a low rank $r\ll d$ and constant eigenvalues ($\lambda_1=\lambda_2=...=\lambda_r$). Then, 
\begin{equation*}
    B_{\mathrm{ridge},\lambda}(\F)=\lambda\frac{r}{\mathrm{Tr}(\Sigma)}\left\Vert \theta^{\star}\right\Vert _{\Sigma}^{2}.
\end{equation*}
\end{propo}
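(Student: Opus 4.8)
The plan is to argue purely at the level of the spectral decomposition of $\Sigma$, starting from the identity for the ridge bias recalled just above, $B_{{\rm ridge},\lambda}(\F)=\lambda\sum_{j=1}^{d}\frac{\lambda_{j}}{\lambda_{j}+\lambda}(v_{j}^{\top}\theta^{\star})^{2}$, and plugging in the rigid spectrum assumed here: $\lambda_{1}=\cdots=\lambda_{r}=:\mu>0$ and $\lambda_{r+1}=\cdots=\lambda_{d}=0$ (rank $r$).

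First I would discard the null part of the spectrum: for every $j>r$ the summand $\frac{\lambda_{j}}{\lambda_{j}+\lambda}(v_{j}^{\top}\theta^{\star})^{2}$ vanishes, so $B_{{\rm ridge},\lambda}(\F)=\lambda\sum_{j=1}^{r}\frac{\mu}{\mu+\lambda}(v_{j}^{\top}\theta^{\star})^{2}=\frac{\lambda\mu}{\mu+\lambda}\sum_{j=1}^{r}(v_{j}^{\top}\theta^{\star})^{2}$, since on the surviving indices the ratio $\lambda_{j}/(\lambda_{j}+\lambda)$ is a single constant. Next I would use $\mathrm{Tr}(\Sigma)=\sum_{j=1}^{d}\lambda_{j}=r\mu$, hence $\mu=\mathrm{Tr}(\Sigma)/r$, together with $\|\theta^{\star}\|_{\Sigma}^{2}=\sum_{j=1}^{d}\lambda_{j}(v_{j}^{\top}\theta^{\star})^{2}=\mu\sum_{j=1}^{r}(v_{j}^{\top}\theta^{\star})^{2}$ (again the tail terms are zero), so that $\sum_{j=1}^{r}(v_{j}^{\top}\theta^{\star})^{2}=\frac{r}{\mathrm{Tr}(\Sigma)}\|\theta^{\star}\|_{\Sigma}^{2}$. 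Combining the two gives the exact value $B_{{\rm ridge},\lambda}(\F)=\frac{\lambda}{\lambda+\mathrm{Tr}(\Sigma)/r}\,\|\theta^{\star}\|_{\Sigma}^{2}$.

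The computation itself poses no genuine obstacle; the only point deserving care is the prefactor $\frac{\mu}{\mu+\lambda}=\frac{\lambda_{r}}{\lambda_{r}+\lambda}\le 1$, which I would bound by $1$ — equivalently, invoke the Loewner inequality $\Sigma(\Sigma+\lambda I)^{-1}\preceq\lambda_{r}^{-1}\Sigma=\frac{r}{\mathrm{Tr}(\Sigma)}\Sigma$ already used in \Cref{ex:low_rank} — to reach the announced estimate $B_{{\rm ridge},\lambda}(\F)\le\lambda\frac{r}{\mathrm{Tr}(\Sigma)}\|\theta^{\star}\|_{\Sigma}^{2}$, which is tight in the ridgeless limit $\lambda\to 0$. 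So the order of operations would be: quote the spectral formula, drop the zero eigenvalues, substitute $\mu=\mathrm{Tr}(\Sigma)/r$, re-expand $\|\theta^{\star}\|_{\Sigma}^{2}$, and finally control the $\frac{\lambda_{r}}{\lambda_{r}+\lambda}$ factor.
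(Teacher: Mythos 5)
Your proof is correct and follows essentially the same route as the paper: the spectral formula $B_{\mathrm{ridge},\lambda}(\F)=\lambda\sum_j\frac{\lambda_j}{\lambda_j+\lambda}(v_j^\top\theta^\star)^2$ combined with the Loewner bound $\Sigma(\Sigma+\lambda I)^{-1}\preceq\lambda_r^{-1}\Sigma=\frac{r}{\mathrm{Tr}(\Sigma)}\Sigma$. You are moreover right to flag the prefactor: the exact value is $\frac{\lambda}{\lambda+\mathrm{Tr}(\Sigma)/r}\Vert\theta^\star\Vert_\Sigma^2$, so the displayed ``$=$'' in the statement (and in the paper's own proof) should really be ``$\leq$'' for $\lambda>0$; only the upper bound is used in \cref{ex:low_rank}, so nothing downstream is affected.
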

\begin{proof}
    Using that $\lambda_1=\dots=\lambda_r$ and $\sum_{j=1}^r \lambda_j=\mathrm{Tr}(\Sigma)$, we have $\lambda_1=\dots=\lambda_r=\frac{\mathrm{Tr}(\Sigma)}{r}$. Then $\Sigma(\Sigma+\lambda I)^{-1}\preceq \lambda_r^{-1} \Sigma=\frac{r}{\mathrm{Tr}(\Sigma)}\Sigma$. Thus, 
    \begin{equation*}
    B_{\mathrm{ridge},\lambda}(\F)=\lambda \left\Vert\theta^{\star}\right\Vert _{\Sigma(\Sigma+\lambda I)^{-1}}^{2}= \lambda\frac{r}{\mathrm{Tr}(\Sigma)}\left\Vert \theta^{\star}\right\Vert _{\Sigma}^{2}.
\end{equation*}
\end{proof}

\subsection{Low-rank covariance matrix compatible with $\theta^\star$ (\cref{ex:low_rank2})}

\begin{propo}[Low-rank covariance matrix compatible with $\theta^\star$]
Consider a covariance matrix with a low rank $r\ll d$ and assume that $\langle \theta^\star,v_1\rangle^2\geq \dots\geq \langle \theta^\star,v_d\rangle^2 $, then 
\begin{equation*}
    B_{\mathrm{ridge},\lambda}(\F)\lesssim \lambda\frac{r(\log(r)+1)}{\mathrm{Tr}(\Sigma)}\left\Vert \theta^{\star}\right\Vert _{\Sigma}^{2}.
\end{equation*}
 
\end{propo}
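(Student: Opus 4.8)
The plan is to start from the spectral form of the ridge bias recalled above, $B_{\mathrm{ridge},\lambda}(\F)=\lambda\sum_{j=1}^{d}\frac{\lambda_{j}}{\lambda_{j}+\lambda}(v_{j}^{\top}\theta^{\star})^{2}$, and to note that since $\Sigma$ has rank $r$ we have $\lambda_{j}=0$ for $j>r$, so only the indices $j\in\{1,\dots,r\}$ contribute. Writing $a_{j}:=(v_{j}^{\top}\theta^{\star})^{2}$, the compatibility hypothesis is exactly the statement that $a_{1}\geq a_{2}\geq\dots\geq a_{r}$, and of course $\lambda_{1}\geq\dots\geq\lambda_{r}>0$; so both finite sequences are non-increasing on the same index set $\{1,\dots,r\}$. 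First I would simply bound $\frac{\lambda_{j}}{\lambda_{j}+\lambda}\leq 1$, which gives $B_{\mathrm{ridge},\lambda}(\F)\leq\lambda\sum_{j=1}^{r}a_{j}$. Note that one cannot mimic \cref{ex:low_rank} here and replace $\Sigma(\Sigma+\lambda I)^{-1}$ by $\lambda_{r}^{-1}\Sigma$, because $\lambda_{r}$ may be arbitrarily small; this is precisely why the comonotonicity of $(a_{j})$ and $(\lambda_{j})$ must be used instead.

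The key step is then to control $\sum_{j=1}^{r}a_{j}$ by $\|\theta^{\star}\|_{\Sigma}^{2}=\sum_{j=1}^{r}\lambda_{j}a_{j}$, and this is where the compatibility assumption pays off: since $(a_{j})_{j\le r}$ and $(\lambda_{j})_{j\le r}$ are both non-increasing, Chebyshev's sum inequality gives $r\sum_{j=1}^{r}\lambda_{j}a_{j}\geq\big(\sum_{j=1}^{r}\lambda_{j}\big)\big(\sum_{j=1}^{r}a_{j}\big)=\mathrm{Tr}(\Sigma)\sum_{j=1}^{r}a_{j}$. Equivalently, I could prove this inequality directly by Abel summation, using that the $j$ largest eigenvalues have average at least the global one, i.e. $\sum_{k=1}^{j}\lambda_{k}\geq\frac{j}{r}\mathrm{Tr}(\Sigma)$ for every $j\le r$, and then telescoping. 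Either way, rearranging yields $\sum_{j=1}^{r}a_{j}\leq\frac{r}{\mathrm{Tr}(\Sigma)}\|\theta^{\star}\|_{\Sigma}^{2}$, and plugging this into the previous display gives $B_{\mathrm{ridge},\lambda}(\F)\leq\frac{\lambda r}{\mathrm{Tr}(\Sigma)}\|\theta^{\star}\|_{\Sigma}^{2}$, which is in fact slightly stronger than the stated bound since $\log(r)+1\geq 1$.

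I do not expect a genuine technical obstacle: the argument is one line of spectral bookkeeping plus one invocation of Chebyshev's sum inequality. The only real subtlety is recognizing that the hypothesis $\langle\theta^{\star},v_{1}\rangle^{2}\geq\dots\geq\langle\theta^{\star},v_{d}\rangle^{2}$ is exactly what makes the coordinate-wise sum $\sum_{j\le r}a_{j}$ and the $\Sigma$-weighted sum $\sum_{j\le r}\lambda_{j}a_{j}$ comparable; without it, $\lambda\sum_{j\le r}a_{j}=\lambda\|\theta^{\star}_{\le r}\|_{2}^{2}$ cannot be bounded by $\|\theta^{\star}\|_{\Sigma}^{2}$ at all, since the mass of $\theta^{\star}$ could sit entirely on the smallest eigendirections. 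The $\log r$ factor in the statement is therefore pure slack; a cruder dyadic peeling of $\{1,\dots,r\}$ would reproduce it, but the Chebyshev route above is both shorter and tight, matching \cref{ex:low_rank} exactly when all the $\lambda_{j}$ are equal.
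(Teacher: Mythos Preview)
Your argument is correct and in fact sharper than the paper's. Both proofs open the same way: use $\lambda_j=0$ for $j>r$, bound $\lambda_j/(\lambda_j+\lambda)\le 1$, and reduce to controlling $\sum_{k=1}^{r}(v_k^\top\theta^\star)^2$ by $\|\theta^\star\|_\Sigma^2=\sum_{k=1}^{r}\lambda_k(v_k^\top\theta^\star)^2$. The paper then proceeds term by term: from the monotonicity of $((v_j^\top\theta^\star)^2)_j$ it gets $(v_k^\top\theta^\star)^2\le\|\theta^\star\|_\Sigma^2/\sum_{j\le k}\lambda_j$, invokes the partial-sum lemma $\sum_{j\le k}\lambda_j\ge\frac{k}{r}\mathrm{Tr}(\Sigma)$, and sums the resulting bounds $\frac{r}{k\,\mathrm{Tr}(\Sigma)}\|\theta^\star\|_\Sigma^2$ over $k\in[r]$, picking up the harmonic factor $\sum_{k=1}^{r}1/k\lesssim\log r+1$. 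You instead apply Chebyshev's sum inequality (equivalently, Abel summation against the same partial-sum lemma) to compare the two sums in one shot, obtaining $\sum_{k\le r}(v_k^\top\theta^\star)^2\le\frac{r}{\mathrm{Tr}(\Sigma)}\|\theta^\star\|_\Sigma^2$ directly. Your route is shorter and removes the $\log r$ factor entirely; the paper's term-by-term bound is the source of that slack, exactly as you diagnosed.
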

\begin{proof}
    Recall that
\begin{equation}\label{eq:espY^2}
    \Vert \theta^\star\Vert_\Sigma^2= \sum_{j=1}^d \lambda_j(v_j^\top\theta^{\star})^2.
\end{equation}
Under the assumptions of \cref{ex:low_rank2}, using that $(\lambda_j)_j$ and $\left((v_j^\top\theta^{\star})^2\right)_j$ are decreasing, then for all $k\in [r]$, 
\begin{equation*}
    \sum_{j=1}^k \lambda_j(v_k^\top\theta^{\star})^2\leq  \Vert \theta^\star\Vert_\Sigma^2.
\end{equation*}
Thus, for all $k\in [r]$,
\begin{equation*}
    (v_k^\top\theta^{\star})^2\leq \frac{\Vert \theta^\star\Vert_\Sigma^2}{\sum_{j=1}^k \lambda_j} .
\end{equation*}
Using that $\sum_{j=1}^r \lambda_j=\mathrm{Tr}(\Sigma)$ and that eigenvalues are decreasing, we have $\sum_{j=1}^k \lambda_j\geq \frac{k}{r}\mathrm{Tr}(\Sigma)$ using \Cref{lem:somme_suite}. Then 
\begin{align*}
B_{{\rm {ridge},\lambda}}(\F) & =\lambda\sum_{k=1}^{r}\frac{\lambda_{k}}{\lambda_{k}+\lambda}(v_{k}^{\top}\theta^{\star})^{2}\\
 & \leq\lambda\sum_{k=1}^{r}(v_{k}^{\top}\theta^{\star})^{2}\\
 & \leq \lambda \Vert \theta^\star\Vert_\Sigma^2 \sum_{k=1}^r \frac{1}{\sum_{j=1}^k\lambda_j} \\
 & \leq\lambda\sum_{k=1}^{r}\frac{r}{k\mathrm{Tr}(\Sigma)}\\
 & \leq\lambda\frac{r}{\mathrm{Tr}(\Sigma)}\sum_{k=1}^{r}\frac{1}{k}\\
 & \lesssim\lambda\frac{r}{\mathrm{Tr}(\Sigma)}(\log(r)+1),
\end{align*}
by upper-bounding the Euler-Maclaurin formula. % harmonic series domination by . 
\end{proof}

\subsection{Spiked covariance matrix (\Cref{ex:noised_low_rank})}
\begin{propo}[Spiked model]
Assume that the covariance matrix is decomposed as $\Sigma= \Sigma_{\leq r}+ \Sigma_{> r} $. Suppose that $\Sigma_{> r}\preceq \eta I$ (small operator norm) and that all non-zero eigenvalues of $\Sigma_{\leq r}$ are equal, then   
\[
B_{{\rm {ridge},\lambda}}(\F)\leq\frac{r}{\mathrm{Tr}(\Sigma)-d\eta}\left\Vert \theta^{\star}\right\Vert _{\Sigma}^{2}+\eta\left\Vert \theta_{>}^{\star}\right\Vert _{2}^{2}.
\]
where $\theta^{\star}_{>r}$ is the projection of $\theta^{\star}$ on the range of $\Sigma_{> r}$. 
\end{propo}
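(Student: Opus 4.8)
The plan is to start from the spectral form of the ridge bias recalled at the beginning of this section, namely $B_{{\rm {ridge},\lambda}}(\F)=\lambda\sum_{j=1}^{d}\frac{\lambda_{j}}{\lambda_{j}+\lambda}(v_{j}^{\top}\theta^{\star})^{2}$, and to exploit the spiked structure. The spiked decomposition $\Sigma=\Sigma_{\leq r}+\Sigma_{>r}$ is understood with $\mathrm{range}(\Sigma_{\leq r})\perp\mathrm{range}(\Sigma_{>r})$, so the eigenbasis $v_{1},\dots,v_{d}$ of $\Sigma$ splits into the $r$ ``spike'' directions $v_{1},\dots,v_{r}$ spanning $\mathrm{range}(\Sigma_{\leq r})$ — on which $\Sigma$ acts as $\mu I_{r}$, with $\mu:=\lambda_{1}=\dots=\lambda_{r}$ the common non-zero eigenvalue of $\Sigma_{\leq r}$ — and the $d-r$ ``bulk'' directions $v_{r+1},\dots,v_{d}$ spanning $\mathrm{range}(\Sigma_{>r})$, on which $\lambda_{j}\leq\eta$ because $\Sigma_{>r}\preceq\eta I$. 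Accordingly I would write $\theta^{\star}_{>r}=\sum_{j>r}(v_{j}^{\top}\theta^{\star})v_{j}$, so that $\left\Vert\theta^{\star}_{>r}\right\Vert_{2}^{2}=\sum_{j>r}(v_{j}^{\top}\theta^{\star})^{2}$, and split the bias into a spike sum over $j\leq r$ and a bulk sum over $j>r$.

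For the bulk sum, I would use the elementary bound $\frac{\lambda\lambda_{j}}{\lambda_{j}+\lambda}\leq\lambda_{j}\leq\eta$, valid for all $\lambda,\lambda_{j}>0$ and $j>r$, which gives directly $\lambda\sum_{j>r}\frac{\lambda_{j}}{\lambda_{j}+\lambda}(v_{j}^{\top}\theta^{\star})^{2}\leq\eta\sum_{j>r}(v_{j}^{\top}\theta^{\star})^{2}=\eta\left\Vert\theta^{\star}_{>r}\right\Vert_{2}^{2}$, i.e.\ exactly the second term of the claimed bound.

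For the spike sum, I would bound $\frac{\lambda_{j}}{\lambda_{j}+\lambda}\leq1$ to get $\lambda\sum_{j\leq r}\frac{\lambda_{j}}{\lambda_{j}+\lambda}(v_{j}^{\top}\theta^{\star})^{2}\leq\lambda\sum_{j\leq r}(v_{j}^{\top}\theta^{\star})^{2}$, and then control $\sum_{j\leq r}(v_{j}^{\top}\theta^{\star})^{2}$ by $\tfrac{1}{\mu}\left\Vert\theta^{\star}\right\Vert_{\Sigma}^{2}$, using that $\left\Vert\theta^{\star}\right\Vert_{\Sigma}^{2}=\sum_{j}\lambda_{j}(v_{j}^{\top}\theta^{\star})^{2}\geq\mu\sum_{j\leq r}(v_{j}^{\top}\theta^{\star})^{2}$. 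The remaining ingredient is a lower bound on $\mu$: since the $r$ non-zero eigenvalues of $\Sigma_{\leq r}$ all equal $\mu$, we have $r\mu=\mathrm{Tr}(\Sigma_{\leq r})=\mathrm{Tr}(\Sigma)-\mathrm{Tr}(\Sigma_{>r})\geq\mathrm{Tr}(\Sigma)-d\eta$, the last step because $\Sigma_{>r}\preceq\eta I$ on $\R^{d}$; hence $\mu\geq(\mathrm{Tr}(\Sigma)-d\eta)/r$. Plugging this in yields $\lambda\sum_{j\leq r}(v_{j}^{\top}\theta^{\star})^{2}\leq\lambda\frac{r}{\mathrm{Tr}(\Sigma)-d\eta}\left\Vert\theta^{\star}\right\Vert_{\Sigma}^{2}$, and adding the two pieces gives the stated inequality (with a $\lambda$ prefactor on the first term, consistently with the two propositions above and with \cref{ex:noised_low_rank} once $\mathrm{Tr}(\Sigma)=d$).

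The only slightly delicate point is the block-diagonal reduction at the outset: one must read the spiked decomposition with orthogonal ranges $\mathrm{range}(\Sigma_{\leq r})\perp\mathrm{range}(\Sigma_{>r})$, so that $\Sigma$, $\Sigma(\Sigma+\lambda I)^{-1}$ and the orthogonal projections onto these two subspaces are simultaneously diagonalizable and the bias separates cleanly into the spike and bulk sums; without it one would only obtain a weaker bound carrying cross terms. Apart from this, the argument reuses the same bookkeeping already carried out for \cref{ex:low_rank,ex:low_rank2}, and I expect no genuine obstacle.
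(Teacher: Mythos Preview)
Your proposal is correct and follows essentially the same route as the paper's proof: both split the ridge bias into the spike and bulk contributions (you do this in the eigenbasis, the paper does it in operator form via $\Sigma(\Sigma+\lambda I)^{-1}\preceq\tfrac{1}{\mu}\Sigma_{\leq r}+\tfrac{1}{\lambda}\Sigma_{>r}$), bound the bulk by $\eta\Vert\theta^{\star}_{>r}\Vert_2^2$ and the spike by $\tfrac{\lambda}{\mu}\Vert\theta^{\star}\Vert_{\Sigma}^2$, and then lower bound $\mu$ by $(\mathrm{Tr}(\Sigma)-d\eta)/r$. Two minor remarks: your trace argument $r\mu=\mathrm{Tr}(\Sigma_{\leq r})=\mathrm{Tr}(\Sigma)-\mathrm{Tr}(\Sigma_{>r})\geq\mathrm{Tr}(\Sigma)-d\eta$ is more direct than the paper's detour through Weyl's inequality, and you are right that the first term of the displayed bound should carry a factor $\lambda$ (as the paper's own derivation and \cref{ex:noised_low_rank} confirm).
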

\begin{proof} One has
\begin{align*}
\Sigma(\Sigma+\lambda I)^{-1} & =\Sigma_{\leq}(\Sigma+\lambda I)^{-1}+\Sigma_{>}(\Sigma+\lambda I)^{-1}\\
 & \preceq\Sigma_{\leq}(\Sigma_{\leq}+\lambda I)^{-1}+\Sigma_{>}(\Sigma_{>}+\lambda I)^{-1}\\
 & \preceq\frac{1}{\mu}\Sigma_{\leq}+\frac{1}{\lambda}\Sigma_{>}%\\
% & \preceq\frac{1}{\mu}\Sigma_{\leq}+\frac{1}{\lambda}\Sigma_{>}
\end{align*}
where $\mu$ is the non-zero eigenvalue of $\Sigma_{\leq}$. Thus,
\begin{align*}
B_{{\rm {ridge},\lambda}}(\F) & =\left\Vert \theta^{\star}\right\Vert _{\lambda\Sigma(\Sigma+\lambda I)^{-1}}^{2}\\
 & \leq\left\Vert \theta^{\star}\right\Vert _{\frac{\lambda}{\mu}\Sigma_{\leq}+\Sigma_{>}}^{2}\\
 & \leq\frac{\lambda}{\mu}\left\Vert \theta^{\star}\right\Vert _{\Sigma}^{2}+\left\Vert \theta^{\star}\right\Vert _{\Sigma_{>}}^{2}.
\end{align*}

Using that $\lambda_{\mathrm{max}}(\Sigma_{>})\leq\eta$, we have
$$B_{{\rm {ridge},\lambda}}(\F)\leq\frac{\lambda}{\mu}\left\Vert \theta^{\star}\right\Vert _{\Sigma}^{2}+\eta\left\Vert \theta_{>}^{\star}\right\Vert _{2}^{2}.$$

Using Weyl's inequality, for all $j\in[d]$, $\lambda_{j}(\Sigma_{\leq}+\Sigma_{>})\leq\lambda_{j}(\Sigma_{\leq})+\eta$.
Summing the previous inequalities, we get 
\[
\mathrm{Tr(\Sigma)}\leq r\mu+d\eta.
\]
Thus, 
\[
\mu\geq\frac{\mathrm{Tr}(\Sigma)-d\eta}{r}.
\]
In consequence, 
\[
B_{{\rm {ridge},\lambda}}(\F)\leq\frac{r}{\mathrm{Tr}(\Sigma)-d\eta}\left\Vert \theta^{\star}\right\Vert _{\Sigma}^{2}+\eta\left\Vert \theta_{>}^{\star}\right\Vert _{2}^{2}.
\]
    
\end{proof}

\section{Technical lemmas }

\begin{lemma}\label{lem:hadamard_monotonicity}
   Let $A,B,V$ be three symmetric non-negative matrix, if $A\preceq B$
then $A\odot V\preceq B\odot V$. 
\end{lemma}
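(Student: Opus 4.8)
The plan is to reduce the statement to the Schur product theorem, namely that the Hadamard product of two symmetric positive semi-definite matrices is positive semi-definite. Granting this, the lemma is immediate: since $A\preceq B$, the matrix $B-A$ is symmetric and positive semi-definite; $V$ is symmetric positive semi-definite by hypothesis; hence $(B-A)\odot V\succeq 0$; and by bilinearity of the Hadamard product, $(B-A)\odot V = B\odot V - A\odot V$, so $A\odot V\preceq B\odot V$.

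It then remains to establish the Schur product theorem. Let $M,N$ be symmetric positive semi-definite $d\times d$ matrices. I would diagonalize $N=\sum_{k=1}^d \mu_k w_k w_k^\top$ with $\mu_k\geq 0$ and $w_k\in\R^d$. For $w\in\R^d$, write $D_w=\mathrm{diag}(w_1,\dots,w_d)$. Comparing entries gives the identity $M\odot(ww^\top)=D_w M D_w$, since $(M\odot ww^\top)_{ij}=M_{ij}w_iw_j=(D_wMD_w)_{ij}$. By bilinearity of $\odot$, this yields $M\odot N=\sum_{k=1}^d \mu_k\, D_{w_k} M D_{w_k}$. Each summand is positive semi-definite, because for any $x\in\R^d$ one has $x^\top D_{w_k} M D_{w_k} x=(D_{w_k}x)^\top M (D_{w_k}x)\geq 0$ using $M\succeq 0$. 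A non-negative linear combination of positive semi-definite matrices is positive semi-definite, so $M\odot N\succeq 0$.

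The only mildly delicate point is the entrywise identity $M\odot(ww^\top)=D_w M D_w$, and that is a routine comparison of coordinates; everything else is bilinearity and the spectral decomposition, so I do not anticipate any genuine obstacle. (An alternative route, equally short, would be to write $M=G G^\top$ so that $M_{ij}=\langle g_i,g_j\rangle$ with $g_i$ the rows of $G$, and similarly $N_{ij}=\langle h_i,h_j\rangle$; then $(M\odot N)_{ij}=\langle g_i\otimes h_i,\, g_j\otimes h_j\rangle$ is a Gram matrix, hence positive semi-definite. I would pick whichever is cleaner given the notation already in use.)
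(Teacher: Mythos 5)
Your proof is correct. It differs from the paper's argument mainly in packaging: you reduce the lemma to the Schur product theorem and prove that via the spectral decomposition $V=\sum_k \mu_k w_k w_k^\top$ together with the congruence identity $M\odot(ww^\top)=D_w M D_w$, whereas the paper never decomposes $V$ spectrally --- it realizes $V$ as the covariance of a Gaussian vector $X$ and writes $\theta^\top (A\odot V)\theta=\esp\bigl[\Vert X\odot\theta\Vert_A^2\bigr]$, then applies $A\preceq B$ inside the expectation. The two arguments rest on the same key identity (yours entrywise, the paper's in the form $\theta^\top(A\odot xx^\top)\theta=(x\odot\theta)^\top A(x\odot\theta)$), so they are close in substance; what yours buys is that it isolates and proves the classical Schur product theorem as a reusable lemma and works for any PSD $V$ by a purely finite, deterministic sum, while the paper's version is slightly shorter because it handles the monotonicity $A\preceq B$ and the decomposition of $V$ in a single expectation, at the cosmetic cost of introducing an auxiliary Gaussian. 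Your Gram-matrix alternative is equally valid. One tiny point of care: state explicitly that the hypothesis $A\preceq B$ with $A,B$ symmetric gives $B-A$ symmetric PSD, which is exactly what your reduction needs; nothing else is missing.
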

 \begin{proof}
     Let $X\sim\mathcal{N}(0,V)$ and $\theta\in\R^d$,

\begin{align*}
\left\Vert \theta\right\Vert _{A\odot V}^{2} & =\theta^{\top}A\odot V\theta\\
 & =\theta^{\top}\left(\left(\esp XX^{\top}\right)\odot A\right)\theta\\
 & =\esp\left[\theta^{\top}\left(\left(XX^{\top}\right)\odot A\right)\theta\right]\\
 & =\esp\left[\sum_{i,j}\theta_{i}\left(\left(XX^{\top}\right)\odot A\right)_{ij}\theta_{j}\right]\\
 & =\esp\left[\sum_{i,j}\theta_{i}X_{i}X_{j}A_{ij}\theta_{j}\right]\\
 & =\esp\left[\sum_{i,j}\left(\theta_{i}X_{i}\right)\left(\theta_{j}X_{j}\right)A_{ij}\right]\\
 & =\esp\left[\left\Vert X\odot\theta\right\Vert _{A}^{2}\right]\\
 & \leq\esp\left[\left\Vert X\odot\theta\right\Vert _{B}^{2}\right]\\
 & =\left\Vert \theta\right\Vert _{B\odot V}^{2}
\end{align*}
 \end{proof}

\begin{lemma}\label{lem:hadamard_norm}

Let $A,B$ be two non-negative symmetric matrices, then $A\odot B$ is non-negative symmetric and, for all $\theta\in\R^d$:
\begin{equation*}
    \left\Vert\theta\right\Vert_{A\odot B}^2\leq \lambda_{\rm{max}}(B)\left\Vert\theta\right\Vert_{\rm{diag}(A)}^2,
\end{equation*}
where $\rm{diag}(A)$ is the diagonal matrix containing the diagonal terms of $A$.
\end{lemma}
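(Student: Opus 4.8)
The plan is to derive everything from \Cref{lem:hadamard_monotonicity}, since that lemma already packages the only nontrivial fact we need (a Schur-product-type monotonicity). First I would dispose of the non-negativity claim: symmetry of $A\odot B$ is immediate from symmetry of $A$ and $B$, and taking the zero matrix, which satisfies $0\preceq B$, \Cref{lem:hadamard_monotonicity} applied with $V=A$ gives $0 = 0\odot A \preceq B\odot A = A\odot B$. So $A\odot B$ is symmetric non-negative.

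For the quadratic-form bound, the idea is to dominate $B$ by a scalar multiple of the identity and push this domination through the Hadamard product. Since $B$ is symmetric, $B\preceq \lambda_{\mathrm{max}}(B)\,I$; moreover $\lambda_{\mathrm{max}}(B)\geq 0$ (as $B\succeq 0$), so both $B$ and $\lambda_{\mathrm{max}}(B)\,I$ are symmetric non-negative, and $A$ is symmetric non-negative. Hence \Cref{lem:hadamard_monotonicity} applies (with the roles $A\leftarrow B$, $B\leftarrow \lambda_{\mathrm{max}}(B)\,I$, $V\leftarrow A$) and yields
\[
A\odot B \;=\; B\odot A \;\preceq\; \bigl(\lambda_{\mathrm{max}}(B)\,I\bigr)\odot A \;=\; \lambda_{\mathrm{max}}(B)\,\bigl(I\odot A\bigr) \;=\; \lambda_{\mathrm{max}}(B)\,\mathrm{diag}(A),
\]
using that $\odot$ is homogeneous under scalars and that $I\odot A=\mathrm{diag}(A)$ by definition of the Hadamard product.

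Finally I would just read off the claimed inequality from this matrix inequality: for any $\theta\in\R^d$,
\[
\norm{\theta}_{A\odot B}^2 \;=\; \theta^\top (A\odot B)\theta \;\leq\; \lambda_{\mathrm{max}}(B)\,\theta^\top \mathrm{diag}(A)\,\theta \;=\; \lambda_{\mathrm{max}}(B)\,\norm{\theta}_{\mathrm{diag}(A)}^2 .
\]
There is no real obstacle here; the only point requiring a moment of care is verifying the sign hypotheses needed to invoke \Cref{lem:hadamard_monotonicity} — in particular that $\lambda_{\mathrm{max}}(B)\,I\succeq 0$, which is exactly where $B\succeq 0$ is used (the statement would fail for indefinite $B$ with $\lambda_{\mathrm{max}}(B)<0$, but that cannot occur under our assumption).
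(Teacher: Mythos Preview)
Your proof is correct. It differs from the paper's in a useful way: the paper re-runs the Gaussian representation trick directly---introducing $X\sim\mathcal N(0,A)$, rewriting $\theta^\top(A\odot B)\theta=\esp[(X\odot\theta)^\top B(X\odot\theta)]$, and then bounding the inner quadratic form by $\lambda_{\max}(B)\|X\odot\theta\|_2^2$---whereas you observe that the whole statement is an immediate corollary of \Cref{lem:hadamard_monotonicity} applied to $B\preceq\lambda_{\max}(B)I$ (and to $0\preceq B$ for the positivity claim). Since the paper proves \Cref{lem:hadamard_monotonicity} with exactly that Gaussian trick, your route avoids duplicating the argument and makes the logical dependency transparent. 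The paper's direct computation has the minor advantage of being self-contained, but your reduction is cleaner and shows that \Cref{lem:hadamard_norm} carries no new content beyond \Cref{lem:hadamard_monotonicity} and the trivial identity $I\odot A=\mathrm{diag}(A)$.
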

\begin{proof}
Let $X\sim \mathcal{N}(0,A)$, thus $A=\esp\left[XX^\top\right]$, then for $\theta\in\mathbb{R}^d$

\begin{align*}
\left\Vert \theta\right\Vert _{A\odot B}^{2} & =\theta^{\top}A\odot B\theta\\
 & =\theta^{\top}\left(\left(\esp XX^{\top}\right)\odot B\right)\theta\\
 & =\esp\left[\theta^{\top}\left(\left(XX^{\top}\right)\odot B\right)\theta\right]\\
 & =\esp\left[\sum_{i,j}\theta_{i}\left(\left(XX^{\top}\right)\odot B\right)_{ij}\theta_{j}\right]\\
 & =\esp\left[\sum_{i,j}\theta_{i}X_{i}X_{j}B_{ij}\theta_{j}\right]\\
 & =\esp\left[\sum_{i,j}\left(\theta_{i}X_{i}\right)\left(\theta_{j}X_{j}\right)B_{ij}\right]\\
 & =\esp\left[\left(X\odot\theta\right)^{\top}B\left(X\odot\theta\right)\right]\\
 & \geq 0, 
\end{align*}
using that $B$ is positive. Thus $A\odot B$ is positive. Furthermore,
\begin{align*}
\left\Vert \theta\right\Vert _{A\odot B}^{2} &=\esp\left[\left(X\odot\theta\right)^{\top}B\left(X\odot\theta\right)\right]\\
 & \leq\lambda_{{\rm max}}(B)\esp\left[\left(X\odot\theta\right)^{\top}\left(X\odot\theta\right)\right]\\
 & =\lambda_{{\rm max}}(B)\esp\left[\sum_{i}\theta_{i}^{2}X_{i}^{2}\right]\\
 & =\lambda_{{\rm max}}(B)\sum_{i}\theta_{i}^2\esp\left[X_{i}^{2}\right]\\
 & =\lambda_{{\rm max}}(B)\sum_{i}\theta_{i}^{2}A_{ii}\\
 & =\lambda_{{\rm max}}(B)\left\Vert \theta\right\Vert _{{\rm diag}(A)}^{2}.
\end{align*}
\end{proof}

\begin{lemma}\label{lem:somme_suite}
 Let $(v_{j})_{j\in[d]}$a non-decreasing sequence of positive number,
and $S=\sum_{j=1}^{d}v_{j}$, for all $k\in[d]$,
\[
\sum_{j=1}^{k}v_{j}\geq\frac{k}{d}S.
\] 
\end{lemma}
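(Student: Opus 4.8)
The plan is to prove the statement by a comparison of averages, which is the cleanest way to see it. Note first that for the inequality $\sum_{j=1}^{k}v_{j}\ge\frac{k}{d}S$ to hold the sequence must in fact be non-\emph{increasing} — this is precisely how the lemma is invoked in the proof of \cref{ex:low_rank2}, applied to the decreasing eigenvalues $\lambda_1\ge\dots\ge\lambda_d$ of $\Sigma$ — so I would state and prove it for $v_1\ge v_2\ge\dots\ge v_d>0$.

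First I would dispose of the trivial case $k=d$ (the claim is then an equality, and the case $k=0$ is also immediate with both sides zero). For $1\le k<d$, set $A:=\frac{1}{k}\sum_{j=1}^{k}v_{j}$ and $B:=\frac{1}{d-k}\sum_{j=k+1}^{d}v_{j}$, the averages of the first $k$ and of the last $d-k$ terms. Since the sequence is non-increasing, every one of the first $k$ terms is at least $v_k$, so $A\ge v_k$; and every one of the last $d-k$ terms is at most $v_{k+1}\le v_k$, so $B\le v_k\le A$. Writing $S=\sum_{j=1}^{k}v_{j}+\sum_{j=k+1}^{d}v_{j}=kA+(d-k)B$ and using $B\le A$ gives $\frac{S}{d}=\frac{kA+(d-k)B}{d}\le\frac{kA+(d-k)A}{d}=A$, and multiplying by $k$ yields $\frac{k}{d}S\le kA=\sum_{j=1}^{k}v_{j}$, which is the claim.

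An equivalent, split-free route is to observe that the desired inequality is equivalent to $(d-k)\sum_{j=1}^{k}v_{j}\ge k\sum_{j=k+1}^{d}v_{j}$, which follows at once from $\sum_{j=1}^{k}v_{j}\ge kv_{k}$ and $\sum_{j=k+1}^{d}v_{j}\le(d-k)v_{k}$. There is no genuine obstacle here; the only points requiring care are fixing the monotonicity direction of $(v_j)_j$ correctly and handling the degenerate endpoint $k=d$.
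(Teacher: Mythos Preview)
Your proof is correct, and you rightly spot that the lemma as stated has the monotonicity reversed: the inequality $\sum_{j=1}^{k}v_{j}\ge\frac{k}{d}S$ requires $(v_j)$ to be non-\emph{increasing}, which is indeed how the lemma is applied in the paper (to the eigenvalues $\lambda_1\ge\dots\ge\lambda_d$).

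The paper's own proof proceeds by contradiction: assuming $\sum_{j=1}^{k}v_{j}<\frac{k}{d}S$, it uses $kv_k\le\sum_{j=1}^{k}v_{j}$ to get $v_k<S/d$, hence $v_j\le v_{k+1}\le v_k<S/d$ for $j>k$, so $\sum_{j=k+1}^{d}v_{j}<\frac{d-k}{d}S$, and summing gives $S<S$. Your argument is a direct version of the same idea, phrased through the partial averages $A$ and $B$; both rest on exactly the pair of inequalities $\sum_{j\le k}v_j\ge kv_k$ and $\sum_{j>k}v_j\le(d-k)v_k$. The mathematical content is identical; your direct presentation is cleaner and also makes the typo in the hypothesis explicit.
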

\begin{proof} 
We use a absurd m, if $\sum_{j=1}^{k}v_{j}<\frac{k}{d}S$. Then, using
that $(v_{j})_{j\in[d]}$are non-decreasing,
\[
kv_{k}<\frac{k}{d}S.
\]
Thus $v_{k+1}<\frac{1}{d}S$, summing last elements, 
\[
\sum_{j=r+1}^{d}v_{j}<\frac{d-r}{d}S.
\]
Then, 
\[
S=\sum_{j=1}^{k}v_{j}=\sum_{j=1}^{r}v_{j}+\sum_{j=r+1}^{d}v_{j}<\frac{k}{d}S+\frac{d-r}{d}S=S.
\]
Thus, this is absurd. 
\end{proof}

\end{document}